\renewcommand{\le}{\leqslant}
\renewcommand{\ge}{\geqslant}
\newcommand{\ptl}{\partial}
\newcommand{\rr}{{\mathbb{R}}}
\newcommand{\la}{\lambda}
\newcommand{\hh}{H}
\newcommand{\h}{H}
\newcommand{\esf}{\mathbb{S}}
\newcommand{\nn}{\mathbb{N}}
\newcommand{\pp}{P}
\newcommand{\ppc}{P_C}
\newcommand{\escpr}[1]{\big<#1\big>}
\newcommand{\eps}{\varepsilon}
\newcommand{\de}{\delta}
\newcommand{\vol}[1]{|#1|}
\newcommand{\cl}[1]{\text{\rm cl}(#1)}
\newcommand{\clb}{\mkern2mu\overline{\mkern-2mu B\mkern-2mu}\mkern2mu}
\DeclareMathOperator{\divv}{div}
\DeclareMathOperator{\dist}{dist}
\DeclareMathOperator{\intt}{int}
\DeclareMathOperator{\Lip}{Lip}
\DeclareMathOperator{\inr}{inr}
\DeclareMathOperator{\diam}{diam}
\newtheorem{theorem}{Theorem}[section]
\newtheorem{proposition}[theorem]{Proposition}
\newtheorem{lemma}[theorem]{Lemma}
\newtheorem{corollary}[theorem]{Corollary}
\theoremstyle{definition}
\newtheorem{remark}[theorem]{Remark}
\theoremstyle{remark}
\newenvironment{enum}{\begin{enumerate}
}{\end{enumerate}}
\numberwithin{equation}{section}
\begin{document}

\title{Isoperimetric inequalities in Euclidean convex bodies}

\author[M.~Ritor\'e]{Manuel Ritor\'e} \address{Departamento de
Geometr\'{\i}a y Topolog\'{\i}a \\
Universidad de Granada \\ E--18071 Granada \\ Espa\~na}
\email{ritore@ugr.es}
\author[S.~Vernadakis]{Efstratios Vernadakis} \address{Departamento de
Geometr\'{\i}a y Topolog\'{\i}a \\
Universidad de Granada \\ E--18071 Granada \\ Espa\~na}
\email{stratos@ugr.es}

\date{\today}

\thanks{Both authors have been supported by MICINN-FEDER grant MTM2010-21206-C02-01, and Junta de Andaluc\'{\i}a grants FQM-325 and P09-FQM-5088}

\dedicatory{Dedicated to Carlos Ben\'{\i}tez on his 70th birthday}

\begin{abstract}
In this paper we consider the problem of minimizing the relative perimeter under a volume constraint in the interior of a convex body, i.e., a compact convex set in Euclidean space with interior points. We shall not impose any regularity assumption on the boundary of the convex set. Amongst other results, we shall prove the equivalence between Hausdorff and Lipschitz convergence, the continuity of the isoperimetric profile with respect to the Hausdorff distance, 
and the convergence in Hausdorff distance of sequences of isoperimetric regions and their free boundaries. We shall also describe the behavior of the isoperimetric~profile for small volume, and the behavior of isoperimetric regions for small volume.
\end{abstract}

\subjclass[2010]{49Q10,49Q20,52B60}
\keywords{Isoperimetric inequalities, isoperimetric profile, convex bodies, polytopes, Hausdorff distance, Lipschitz distance, perimeter-minimizing sets}

\maketitle

\thispagestyle{empty}

\bibliographystyle{amsplain}

\section{Introduction}
In this work we consider the \emph{isoperimetric problem} of minimizing perimeter under a given volume constraint inside a \emph{convex body}, a 	\emph{compact} convex set $C\subset\rr^{n+1}$ with interior points. The perimeter considered here will be the one relative to the interior of $C$. No regularity assumption on the boundary will be assumed. This problem is often referred to as the \emph{partitioning problem}.

A way to deal with this problem is to consider the \emph{isoperimetric profile} $I_C$ of $C$, i.e., the function assigning to each $0<v<|C|$ the infimum of the relative perimeter of the sets inside $C$ of volume $v$. The isoperimetric profile can be interpreted as an optimal isoperimetric inequality in $C$. A minimum for this problem will be called an \emph{isoperimetric region}. The \emph{normalized isoperimetric profile} $J_C$ is defined on the interval $(0,1)$ by $J_C(\la)=I_C(\la |C|)$.

The isoperimetric profile of convex bodies with smooth boundary has been intensively~considered, and many results are known, such as the concavity of the isoperimetric profile, Sternberg and Zumbrun \cite{MR1674097}, the concavity of the $\big(\tfrac{n+1}{n}\big)$ power of the isoperimetric profile, Kuwert \cite{MR2008339}, the connectedness of the reduced boundary of the isoperimetric regions \cite{MR1674097}, the behavior of the isoperimetric profile for small volumes, B\'erard and Meyer \cite{be-me}, or the behavior of isoperimetric regions for small volumes, Fall \cite{fall}. See also \cite{bayle}, \cite{bay-rosal} and \cite{MR1803220}. The results in all these papers make a strong use of the regularity of the boundary. In particular, in \cite{MR1674097} and \cite{MR2008339}, the $C^{2,\alpha}$ regularity of the boundary implies a strong regularity of the isoperimetric regions up to the boundary, except in a singular set of large Hausdorff codimension, that allows the authors to apply the classical first and second variation formulas for volume and perimeter. The convexity of the boundary then implies the concavity of the profile and the connectedness of the regular part of the free boundary.

Up to our knowledge, the only known results for non-smooth boundary are the ones by Bokowski and Sperner \cite{bo-sp} on isoperimetric inequalities for the Minkowski content in Euclidean convex bodies, the isoperimetric inequality for convex cones by Lions and Pacella \cite{lions-pacella} using the Brunn-Minkowski inequality, with the characterization of isoperimetric regions by Figalli and Indrei \cite{FI}, the extension of Levy-Gromov inequality, \cite[App.~C]{grom}, to arbitrary convex sets given by Morgan \cite{MR2438911}, and the extension of the concavity of the $\big(\tfrac{n+1}{n}\big)$ power of the isoperimetric profile to arbitrary convex bodies by E.~Milman \cite[\S~6]{MR2507637}. In his work on the isoperimetric profile for small volumes in the \emph{boundary} of a polytope, Morgan mentions that his techniques can be adapted to handle the case of small volumes in a solid polytope, \cite[Remark~3.11]{morganpolytops}, without uniqueness, see Remark after Theorem~3.8 in \cite{morganpolytops}.  We recall that isoperimetric inequalities outside a convex set with smooth boundary have been obtained in \cite{MR2338131}, \cite{MR2215458}, \cite{MR2329803}. Previous estimates on least perimeter in convex bodies have been obtained by Dyer and Frieze \cite{MR1141926}, Kannan, Lov\'asz and Simonovits \cite{MR1318794} and Bobkov \cite{MR2347041}. In the initial stages of this research the authors were greatly influenced by the paper of Bokowski and Sperner \cite{bo-sp}, see also \cite{MR936419}. This work is divided into two different parts: in the first one the authors characterize the isoperimetric regions in a ball (for the Minkowski content) using spherical symmetrization, see also \cite{MR934771} and \cite{MR2590630}. In the second part, given a convex body $C$ so that there is a closed ball $\clb(x,r)\subset C$, they build a map between $\clb(x,r)$ and $C$, which transform the volume and the perimeter in a controlled way, allowing them to transfer the isoperimetric inequality of the ball to $C$. This map is not bilipschitz, but can be  modified to satisfy this property.

In this paper we extend some of the results already known for Euclidean convex bodies with smooth boundary to arbitrary convex bodies, and prove new results for the isoperimetric profile.  We begin by considering the Hausdorff and Lipschitz convergences in the space of convex bodies. We prove in Theorem~\ref{thm:lipschitz} that a sequence $C_i$ of convex bodies that converges to a convex body $C$ in Hausdorff distance also converges in Lipschitz distance. This is done by considering a ``natural'' sequence of bilipschitz maps $f_i:C\to C_i$, defined by \eqref{eq:fi}, and proving that $\Lip(f_i)$, $\Lip(f_i^{-1})\to 1$. These maps are modifications of the one used by Bokowski and Sperner in \cite{bo-sp} and have the following key property, see Corollary~\ref{cor:estilipconst}: if $\clb(0,2r)\subset C\cap C'$, $C\cup C'\subset \clb(0,R)$ and $f:C\to C'$ is the considered map then $\Lip(f)$, $\Lip(f^{-1})$ are bounded above by a constant depending only on $R/r$. This implies, see Theorem~\ref{thm:bo-sp}, a uniform non-optimal isoperimetric inequality for all convex bodies with bounded quotient circumradius/inradius. We also prove in Theorem~\ref{thm:equiv} that Lipschitz convergence implies convergence in the weak Hausdorff topology (modulo isometries). Let us recall that in Bayle's Ph.D. Thesis \cite[Thm.~4.2.7]{bayle}  was proven the convergence of the isoperimetric profiles of a sequence of Riemannian manifolds in $\mathcal{M}(n,d,v,\delta)$ converging in Gromov-Hausdorff distance to a Riemannian manifold in the same class. Here $\mathcal{M}(n,d,v,\delta)$ denotes the set of compact $n$-dimensional Riemannian manifolds satisfying $\text{diam}(M,g)\le d$, $\text{vol}(M,g)\ge v$, and $\text{Ricci}_{(M,g)}\ge (n-1)\,\delta\,g$. Let us also recall that the Gromov compactness theorem \cite{grom} implies that the space of compact $n$-dimensional Riemannian manifolds $(M,g)$ with sectional curvatures satisfying $|K|\le c_1$, $\text{vol}(M,g)\le c_2$ and $\text{diam}(M,g)\le c_3$ is precompact in the Lipschitz topology, see also \cite{MR892147}, \cite{MR917868}. Results proving the convergence of the boundaries of smooth non-compact convex hypersurfaces have been given by Alexander and Ghomi \cite{MR2019227}. 

Using Theorem~\ref{thm:lipschitz} we prove in Theorem~\ref{thm: is cont} the pointwise convergence of the normalized isoperimetric profiles. This implies, Corollary~\ref{cor:concavidad de perfil}, through approximation by smooth convex bodies, the concavity of the isoperimetric profile $I_C$ and of the function $I_C^{(n+1)/n}$ for an arbitrary convex body. As observed by Bayle \cite[Thm.~2.3.10]{bayle}, the concavity of $I_C^{(n+1)/n}$ implies the strict concavity of $I_C$. This is an important property that implies the connectedness of an isoperimetric region and of its complement, Theorem~\ref{thm:connectedness}. By standard properties of concave functions, we also obtain in Corollary~\ref{cor:J_{C_i}toJ_C_unif} the uniform convergence of the normalized isoperimetric profiles $J_C$, and of their powers $J_C^{(n+1)/n}$ in compact subsets of the interval $(0,1)$. Using the bilipschitz maps constructed in the first section, we show in Theorem~\ref{thm:isnqgdbl} that a uniform relative isoperimetric inequality, and hence a Poincar\'e inequality, holds in metric balls of small radius in $C$.

Using this relative isoperimetric inequality we prove in Theorem~\ref{thm:leon rigot lem 42} a key result on the density of an isoperimetric region and its complement, similar to the ones obtained by Leonardi and Rigot \cite{le-ri}, which are in fact based on ideas by David and Semmes \cite{MR1625982} for quasi-minimizers of the perimeter. Theorem~\ref{thm:leon rigot lem 42} is closer to a ``clearing out'' result as in Massari and Tamanini \cite[Thm.~1]{MR1124566} (see also \cite{MR1943988}) than to a concentration type argument as in Morgan's \cite[\S~13.7]{MR2455580}. One of the consequences of Theorem~\ref{thm:leon rigot lem 42} is a uniform lower density result,  Corollary~\ref{cor:monotonicity}. The estimates obtained in Theorem~\ref{thm:leon rigot lem 42} are stable enough to allow passing to the limit under Hausdorff convergence. Hence we can improve the $L^1$ convergence of isoperimetric regions and show in Theorem~\ref{thm:EitoE Haus} that this convergence is in Hausdorff distance (see \cite[\S~1.3]{tam-reg} and \cite[Thm.~2.4.5]{MR1736268}). We can prove the convergence of the free boundaries in Hausdorff distance in Theorem~\ref{thm:ptl haus} as well. As a consequence, we are able to show in Theorem~\ref{thm: ptl conect} that, given a convex body $C$, for every $0<v<\vol{C}$, there always exists an isoperimetric region with connected free boundary.

Finally, in the last section we consider the isoperimetric profile for small volumes. In the smooth boundary case, Fall \cite{fall} showed that for sufficiently small volume, the isoperimetric regions are small perturbations of geodesic spheres centered at a global maximum of the mean curvature, and derived an asymptotic expansion for the isoperimetric profile. We show in Theorem~\ref{thm:optinsmalvol} that the isoperimetric profile of a convex set for small volumes is asymptotic to the one of its smallest tangent cone, i.e., the one with the smallest solid angle, and that rescaling isoperimetric regions to have volume $1$ makes them subconverge in Hausdorff distance to an isoperimetric region in this convex cone, which is a geodesic ball centered at some apex by the recent result of Figalli and Indrei \cite{FI}. Although in the interior of the convex set we can apply Allard's regularity result for rectifiable varifolds, obtaining high order convergence of the boundaries of isoperimetric sets, we do not dispose of any regularity result at the boundary to ensure convergence up to the boundary (unless both the set and its limit tangent cone have smooth boundary \cite{MR863638}). As a consequence of Theorem~\ref{thm:optinsmalvol}, we show in Theorem~\ref{thm:polytops} that the only isoperimetric regions of sufficiently small volume inside a convex polytope are geodesic balls centered at the vertices whose tangent cones have the smallest solid angle. The same result holds when the convex set is locally a cone at the points of the boundary with the smallest solid angle. A similar result for the \emph{boundary} of the polytope was proven by Morgan \cite{morganpolytops}.

We have organized this paper into several sections. In the next one we introduce the basic background and notation. In the third one we shall consider the relation between  the Hausdorff and Lipschitz convergence for convex bodies. In the fourth one we shall prove the continuity of the isoperimetric profile with respect to the Hausdorff distance and some consequences, in the fifth one we shall prove the density result and the convergence of isoperimetric regions and their free boundaries in Hausdorff distance. In the last section, we shall study the behavior of the isoperimetric profile and of the isoperimetric regions for small volume.

The results in this paper are intended to be applied to study the behavior of the asymptotic isoperimetric profile of unbounded convex bodies (closed unbounded convex sets with non-empty interior) in Euclidean space.

The authors would like to thank Frank Morgan and Gian Paolo Leonardi for their helpful suggestions and comments.

\section{Preliminaries}

Throughout this paper we shall denote by $C\subset\rr^{n+1}$ a compact convex set with non-empty interior. We shall call such a set a \emph{convex body}. 
Note that this terminology does not agree with some classical texts such as Schneider \cite{sch}. As a rule, basic properties of convex sets which are stated without proof in this paper can be easily found in Schneider's monograph.

The Euclidean distance in $\rr^{n+1}$ will be denoted by $d$, and the $r$-dimensional Hausdorff measure of a set $E$ by $\hh^r(E)$. The volume of a set $E$ is its $(n+1)$-dimensional Hausdorff measure and we shall denote it by $\vol{E}$. We shall denote the closure of $E$ by $\cl{E}$ or $\overline{E}$ and the topological boundary by $\ptl E$. The open ball of center $x$ and radius $r>0$ will be denoted by $B(x,r)$, and the corresponding closed ball by $\clb(x,r)$.

In the space of convex bodies one may consider two different notions of convergence. Given a convex body $C$, and $r>0$, we define $C_r=\{p\in\rr^{n+1}: d(p,C)\le r\}$. The set $C_r$ is the tubular neighborhood of radius $r$ of $C$ and is a closed convex set. Given two convex sets $C$, $C'$, we define its \emph{Hausdorff distance} $\delta(C,C')$ by
\begin{equation}
\label{eq:hd}
\delta(C,C')=\inf\{r>0: C\subset (C')_r, C'\subset C_r\}.
\end{equation}
The space of convex bodies with the Hausdorff distance is a metric space. Bounded sets in this space are relatively compact by Blaschke's Selection Theorem, \cite[Thm.~1.8.4]{sch}. We shall say that a sequence $\{C_i\}_{i\in\nn}$ of convex bodies converges to a convex body $C$ in Hausdorff distance if $\lim_{i\to\infty}\delta(C_i,C)=0$.

Given two convex bodies $C$, $C'\subset\rr^{n+1}$, we define its \emph{weak Hausdorff distance} $\de_S(C,C')$~by
\begin{equation}
\label{eq:shd}
\de_S(C,C')=\inf\{\de(C,h(C')): h\in\text{Isom}(\rr^{n+1})\}.
\end{equation}
The weak Hausdorff distance is non-negative, symmetric, and satisfies the triangle inequality. Moreover, $\de_S(C,C')=0$ if and only if there exists $h\in\text{Isom}(\rr^{n+1})$ such that $C=h(C')$.

A map $f:(X,d)\to (X',d')$ between metric spaces is \emph{lipschitz} if~there exists a constant $L>0$ so that
\begin{equation}
\label{eq:lipschitzdef}
d'(f(x),f(y))\le L\,d(x,y),
\end{equation}
for all $x$, $y\in X$. Sometimes we will refer to such a map as an $L$-lipschitz map. The smallest constant satisfying \eqref{eq:lipschitzdef}, sometimes called the dilatation of $f$, will be denoted by $\Lip(f)$. A lipschitz function on $(X,d)$ is a lipschitz map $f:X\to\rr$, where we consider on $\rr$ the Euclidean distance. A map $f:X\to Y$ is bilipschitz if both $f$ and $f^{-1}$ are lipschitz maps.

Given two convex bodies $C$, $C'$, we define its \emph{Lipschitz distance} $d_L$ by
\begin{equation}
\label{eq:ld}
d_L(C,C')=\inf_{f\in\Lip(C,C')}\{\log(\max\{\Lip(f),\Lip(f^{-1})\})\},
\end{equation}
where $\Lip(C,C')$ is the set of bilipschitz maps from $C$ to $C'$. We shall say that a sequence $\{C_i\}_{i\in\nn}$ of convex bodies converges in Lipschitz distance to a convex body $C$ if $\lim_{i\to\infty} d_L(C_i,C)=0$. The Lipschitz distance is non-negative, symmetric and satisfies the triangle inequality. Moreover, $d_L(C,C')=0$ if and only if $C$ and $C'$ are isometric. If a sequence $\{C_i\}_{i\in\nn}$ converges to $C$ is the lipschitz sense, then there is a sequence of bilipschitz maps $f_i:C_i\to C$ such that
\[
\lim_{i\to\infty} \log(\max\{\Lip(f_i),\Lip(f_i^{-1})\})=0.
\]
This implies $\lim_{i\to\infty}\max\{\Lip(f_i),\Lip(f_i^{-1})\}=1$. As $1\le \Lip(f_i)\Lip(f_i^{-1})$, we obtain that both $\Lip(f_i)$, $\Lip(f_i^{-1})\to 1$. Conversely, if there is a sequence of bilipschitz maps $f_i:C_i\to C$ such that $\lim_{i\to\infty}\Lip(f_i)=\lim_{i\to\infty}\Lip(f_i^{-1})=1$ then $\lim_{i\to\infty}d_L(C_i,C)=0$.

If $M$, $N$ are subsets of Euclidean spaces and $f: M\to N$ is a lipschitz map, then $g:\la M\to\la N$ defined by $g(x)=\la f(\tfrac{x}{\la}), x \in \la M, \la >0$, is a lipschitz map so that $\Lip(g)=\Lip(f)$. This yields the very useful consequence
\begin{equation}
\label{eq:lipinvomo}
d_L(\la M,\la N)=d_L(M,N), \qquad \la>0.
\end{equation}

For future reference, we list the following properties of lipschitz maps and functions

\begin{lemma}\mbox{}
\label{lem:lipschitz}
\begin{enum}
\item Let $f$ be a lipschitz function on $(X,d)$ so that $|f|\ge M>0$. Then $1/f$ is a lipschitz function and $\Lip(1/f)\le \Lip(f)/M^2$.
\item Let $f_1, f_2$ be lipschitz functions on $(X,d)$. Then $f_1+f_2$ is a lipschitz function and $\Lip(f_1+f_2)\le\Lip(f_1)+\Lip(f_2)$.
\item Let $f_1, f_2$ be lipschitz functions on $(X,d)$ so that $|f_i|\le M_i$, $i=1,2$. Then $f_1f_2$ is a lipschitz function and $\Lip(f_1f_2)\le M_1\Lip(f_2)+M_2\Lip(f_1)$.
\item If $\la :(X,d)\to \rr$ is lipschitz with $|\lambda|\le L'$, and $f:(X,d)\to \rr^n$ is lipschitz with $|f|<M'$, then $\Lip(\la f)\le M'\Lip(\lambda)+L'\Lip(f)$.
\item If $f_i$ are lipschitz maps that converge pointwise to a lipschitz map $f$, then $\Lip(f)\le\liminf_{i\to\infty}\Lip(f_i)$.
\end{enum}
\end{lemma}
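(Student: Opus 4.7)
The plan is to verify the five assertions by direct application of the Lipschitz definition \eqref{eq:lipschitzdef}, adding and subtracting terms where needed. None of the items should require more than one or two lines once the right algebraic identity is in place, and the only subtlety is in (v), where one must exploit that the pointwise limit inherits the uniform Lipschitz estimate via a liminf.

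For (i), I would write
\[
\left|\tfrac{1}{f(x)}-\tfrac{1}{f(y)}\right|=\frac{|f(y)-f(x)|}{|f(x)||f(y)|}\le \frac{\Lip(f)}{M^2}\,d(x,y),
\]
using the hypothesis $|f|\ge M$ in the denominator. Item (ii) is immediate from the triangle inequality applied to $(f_1+f_2)(x)-(f_1+f_2)(y)$.

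Items (iii) and (iv) rest on the same standard identity. For (iii) I would write
\[
f_1(x)f_2(x)-f_1(y)f_2(y)=f_1(x)\bigl(f_2(x)-f_2(y)\bigr)+f_2(y)\bigl(f_1(x)-f_1(y)\bigr),
\]
take absolute values, and bound $|f_1(x)|\le M_1$, $|f_2(y)|\le M_2$ to get $\Lip(f_1f_2)\le M_1\Lip(f_2)+M_2\Lip(f_1)$. Item (iv) is the vector-valued analogue: apply the same decomposition $\lambda(x)f(x)-\lambda(y)f(y)=\lambda(x)(f(x)-f(y))+(\lambda(x)-\lambda(y))f(y)$ and take Euclidean norms, using $|\lambda|\le L'$ and $|f|<M'$.

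For (v), fix $x,y\in X$ and note that since $f_i\to f$ pointwise,
\[
|f(x)-f(y)|=\lim_{i\to\infty}|f_i(x)-f_i(y)|\le \liminf_{i\to\infty}\Lip(f_i)\,d(x,y).
\]
Taking the supremum over $x\neq y$ of the ratio $|f(x)-f(y)|/d(x,y)$ yields $\Lip(f)\le\liminf_{i\to\infty}\Lip(f_i)$. I do not anticipate any real obstacle in any of the five items; they are bookkeeping lemmas collected for later reference, and the only point worth flagging is that in (v) one genuinely needs the liminf rather than the limit, since $\Lip(f_i)$ need not converge.
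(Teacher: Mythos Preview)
Your proof is correct and complete; each item is handled by the standard algebraic identity or limit argument you describe. The paper itself states this lemma without proof, treating it as an elementary collection of bookkeeping facts, so there is nothing to compare against.
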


The behavior of the Hausdorff measure \cite[\S~1.7.2]{bbi} with respect to lipschitz maps is well known.

\begin{lemma}
\label{lem:bilip}
Let $C,C'\subset\rr^{n+1}$ and $f:C\to C'$ a Lipschitz map. Then, for every $s>0$ and $E\subset C$ we have
\begin{equation}
\h^s(f(E)) \le \Lip(f)^s\, \h^s(E).
\end{equation}
Morever, If $f$ is bilipschitz then we have
\begin{equation}
\frac{1}{\Lip(f ^{-1})^s}\,\h^s(E) \le \h^s(f(E)) \le \Lip(f)^s\, \hh^s(E).
\end{equation}
\end{lemma}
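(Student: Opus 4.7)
The plan is to argue directly from the definition of Hausdorff measure in terms of $\delta$-approximations by countable covers, and transfer a cover of $E$ to a cover of $f(E)$ using the Lipschitz constant of $f$.

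First, I would recall that, up to a dimensional normalization constant $\omega_s$,
\[
\h^s(E)=\lim_{\delta\to 0}\h^s_\delta(E),\qquad \h^s_\delta(E)=\inf\Bigl\{\sum_{j}(\diam U_j)^s:\,E\subset\bigcup_j U_j,\ \diam U_j\le\delta\Bigr\}.
\]
Then I would take an arbitrary countable cover $\{U_j\}$ of $E$ with $\diam U_j\le\delta$, and set $V_j:=f(U_j\cap E)$. Since $f$ is $L$-Lipschitz with $L=\Lip(f)$, one has $\diam V_j\le L\,\diam(U_j\cap E)\le L\,\diam U_j\le L\delta$, and of course $\{V_j\}$ covers $f(E)$. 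Hence
\[
\sum_{j}(\diam V_j)^s\le L^s\sum_{j}(\diam U_j)^s,
\]
which, after taking infimum over covers, gives $\h^s_{L\delta}(f(E))\le L^s\,\h^s_\delta(E)$.

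Letting $\delta\to 0$ (so $L\delta\to 0$ too) yields the first inequality $\h^s(f(E))\le\Lip(f)^s\,\h^s(E)$. For the bilipschitz case, it suffices to apply this inequality to the Lipschitz map $f^{-1}:f(E)\to E$ together with the obvious identity $f^{-1}(f(E))=E$, obtaining $\h^s(E)\le\Lip(f^{-1})^s\,\h^s(f(E))$, which rearranges to the remaining lower bound. The two inequalities combined give the bilipschitz statement.

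There is really no main obstacle here: the result is a standard consequence of the definition of Hausdorff measure, and the only thing one must be mindful of is that the Carathéodory construction using diameters of covers is insensitive to whether one takes $U_j$ or $U_j\cap E$, and that the normalizing constant $\omega_s$ cancels out on both sides, so the argument works uniformly for every $s>0$.
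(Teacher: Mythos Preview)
Your argument is correct and is exactly the standard proof from the definition of Hausdorff measure. The paper does not actually prove this lemma: it simply states the result as well known and refers to \cite[\S~1.7.2]{bbi}, so there is no proof in the paper to compare against.
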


For $t\ge 0$, let $E(t)$ denote the set of points of density $t$ of $E$ in $C$
\[
E(t)=\{ x\in C: \lim_{r\to 0}\frac{\vol{E\cap B_C(x,r)}}{\vol{B_C(x,r)}}=t \}.
\]
Since $\vol{E\cap\ptl C}=0$, we have that
$\vol{E(t)}=\vol{E(t)\cap\intt(C)}$. By Lebesgue- Besicovitch Theorem we have
$\vol{E(1)}=\vol{E}$ and similarly $\vol{E(0)}=\vol{C\setminus E}$. 

For $E \subset C$, we define the \emph{perimeter} of $ E$ in the \emph{interior} of $C$ by
\[
\pp_C(E)=\pp(E,\intt(C))= \sup \Big \{ \int_E\divv \xi\, d{\h}^{n+1}, \xi \in \mathfrak{X}_0(\intt(C)) ,\, |\xi| \le 1 \Big \},
\]
where $\mathfrak{X}_0(\intt(C))$ is the set of smooth vector fields with compact support in the interior of $C$. We shall say that $E$ has \emph{finite perimeter} in $\intt(C)$ if $P_C(E)<\infty$. A set $E$ of finite perimeter in $\intt(C)$ satisfies $P(E)\le P_C(E)+H^n(\ptl C)$ and so is a Cacciopoli set in $\rr^{n+1}$. We can define its reduced boundary $\ptl^*E$ as in \cite[Chapter~3]{gi} and we have $P_C(E)=H^n(\ptl^*E\cap\intt(C))$.

Observe that we are only taking into account the $\mathcal{H}^n$-measure of $\ptl E$ inside the interior of $C$. We define the \emph{isoperimetric profile} of $C$ by
\begin{equation}
\label{eq:profile}
I_C(v)=\inf \Big \{ \pp_C(E) : E \subset C, \vol{E} = v \Big \}.
\end{equation}
We shall say that $E \subset C$ is an \emph{isoperimetric region} if $\pp_C(E)=I_C(|E|)$. The \emph{renormalized isoperimetric profile} of $C$ is
\begin{equation}
\label{eq:renprofile}
Y_C=I_C^{(n+1)/n}.
\end{equation}
We shall denote by $J_C:[0,1]\to\rr^+$ the \emph{normalized isoperimetric profile} function
\begin{equation}
\label{eq:jc}
J_C(\la)=I_C(\la\,\vol{C}).
\end{equation}
We shall also denote by $y_C:[0,1]\to\rr^+$ the function
\begin{equation}
\label{eq:def:y_C}
y_C=J_C^{(n+1)/n}.
\end{equation}

Standard results of Geometric Measure Theory imply that isoperimetric regions exist in a convex body. The following basic properties are well known.

\begin{lemma}
\label{lem:fundamental}
Let $C\subset\rr^{n+1}$ be a convex body. Consider a sequence $\{E_i\}_{i\in\nn}\subset C$ of subsets with finite perimeter in the interior of $C$.
\begin{enum}
\item
If  $E_i$ converges to a set $ E\subset C$ with finite perimeter in $\intt(C)$ in the $L^1(\intt(C))$ sense, then $\pp_C(E)\le\liminf_{i\to\infty}\ppc(E_i) $

\item If  $\ppc(E_i)$ is uniformly bounded from above, then there exists a set $E\subset C$ of finite perimeter in $\intt(C)$ such that a subsequence of $\{E_i\}_{i\in\nn}$  converges to $E$ in the $L^1(\intt(C))$ sense.
\item Isoperimetric regions exist in $C$ for every volume.
\item $I_C$ is continuous.
\end{enum}
\end{lemma}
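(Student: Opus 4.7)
The plan is to treat the four items sequentially, as (i) and (ii) are standard BV-theoretic facts, (iii) is the direct method, and (iv) combines them with an elementary geometric modification argument.

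For (i), I would proceed directly from the dual definition of $P_C$. For each admissible $\xi\in\mathfrak{X}_0(\intt(C))$ with $|\xi|\le 1$, the vector field has compact support in $\intt(C)$, so $\divv\xi\in L^\infty$ with compact support; hence $L^1(\intt(C))$ convergence $E_i\to E$ gives $\int_{E_i}\divv\xi\,d\h^{n+1}\to \int_E\divv\xi\,d\h^{n+1}$. Each integral on the left is $\le P_C(E_i)$, so the limit is $\le \liminf_i P_C(E_i)$, and taking the supremum over $\xi$ produces the claim.

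For (ii), I would use the fact recalled just before the lemma that $P(E_i)\le P_C(E_i)+\h^n(\ptl C)$, where $\h^n(\ptl C)<\infty$ because $\ptl C$ is locally a graph of a convex (hence lipschitz) function. Thus $\{\chi_{E_i}\}$ is a bounded sequence in $BV(\rr^{n+1})$ supported in the fixed compact set $C$; the standard $BV$ compactness theorem (e.g.\ Giusti, Chap.~1) delivers a subsequence convergent in $L^1(\rr^{n+1})$, and the limit $\chi_E$ gives $E\subset C$. Lower semicontinuity of total variation (which is (i) applied globally) shows $E$ has finite perimeter in $\intt(C)$.

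For (iii), I would take a minimizing sequence $\{E_i\}$ with $|E_i|=v$ and $P_C(E_i)\to I_C(v)$. The perimeters are uniformly bounded, so (ii) gives $E_{i_k}\to E$ in $L^1(\intt(C))$; the $L^1$ convergence of characteristic functions implies $|E|=v$, and (i) yields $P_C(E)\le \liminf P_C(E_{i_k})=I_C(v)$, so $E$ is isoperimetric.

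For (iv), I would prove lower and upper semicontinuity separately. Lower semicontinuity at $v\in(0,|C|)$: given $v_j\to v$, pick isoperimetric $E_j$ with $|E_j|=v_j$ (by (iii)); $P_C(E_j)=I_C(v_j)$ is bounded on tails by an upper-semicontinuity estimate (below), so (ii) and (i) give $I_C(v)\le P_C(E)\le \liminf I_C(v_j)$. Upper semicontinuity is the geometrically delicate part: given $E$ isoperimetric for $v$ and a small number $\eta>0$, I need to build competitors of volume $v\pm\eta$ with perimeter at most $I_C(v)+o(1)$. Choose an interior point $x_0\in\intt(C)\cap E(1)$ and an interior point $x_1\in\intt(C)\cap E(0)$, which exist because $0<v<|C|$ and $|E(1)|=|E|$, $|E(0)|=|C\setminus E|$. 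For small $r$, $B(x_0,r)\subset\intt(C)\cap E$ up to negligible set, and similarly for $x_1$. To decrease volume by $\eta$, remove a ball of radius $r\sim\eta^{1/(n+1)}$ around $x_0$; this changes perimeter by at most $C_n r^n = O(\eta^{n/(n+1)})$. To increase volume by $\eta$, add a ball of the same size around $x_1$. Since $v\mapsto I_C(v)$ is thereby bounded above by $I_C(v)+O(|v-v'|^{n/(n+1)})$, we get $\limsup_{v'\to v}I_C(v')\le I_C(v)$, completing continuity.

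The main obstacle is the upper-semicontinuity step in (iv): one must guarantee the existence of the interior density-$0$ and density-$1$ points around which to perform the ball surgery, and one must verify that the density calculation remains valid near $\ptl C$; this is precisely why I use the Lebesgue--Besicovitch identification $|E(1)|=|E|$ and $|E(0)|=|C\setminus E|$ combined with the inclusion $E(t)\cap\intt(C)$ having full measure in $E(t)$.
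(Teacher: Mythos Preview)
Your proof is correct and essentially supplies the details behind the paper's own proof, which consists entirely of citations: the paper invokes Giusti~\cite{gi} (lower semicontinuity and $BV$ compactness) for (i)--(iii) and Gallot~\cite{gallot} for the continuity (iv). Your arguments for (i)--(iii) are exactly the standard ones those references contain, and your ball-surgery argument for (iv) is the classical proof underlying Gallot's lemma.

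One phrasing in (iv) should be tightened. You write that for small $r$, ``$B(x_0,r)\subset\intt(C)\cap E$ up to negligible set'' because $x_0\in E(1)$. Density one only gives $|B(x_0,r)\setminus E|=o(r^{n+1})$, not that the ball lies in $E$ modulo a null set. This does not damage the argument: removing $B(x_0,r)$ from $E$ decreases volume by $|E\cap B(x_0,r)|$, which is continuous and increasing in $r$ with $|E\cap B(x_0,r)|/|B(x_0,r)|\to 1$, so by the intermediate value theorem you can choose $r=r(\eta)$ with $|E\cap B(x_0,r)|=\eta$ and still have $r\sim\eta^{1/(n+1)}$; the perimeter bound $P_C(E\setminus B(x_0,r))\le P_C(E)+\h^n(\ptl B(x_0,r))=P_C(E)+O(\eta^{n/(n+1)})$ holds regardless. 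With this adjustment your upper-semicontinuity step is clean, and since you (correctly) note that the upper estimate is established independently and then fed into the lower-semicontinuity compactness argument, there is no circularity.
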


\begin{proof}
Properties (i), (ii) and (iii) follow from the lower semicontinuity of perimeter \cite[Thm.~1.9]{gi} and  compactness \cite[Thm.~1.19]{gi}. The continuity of the isoperimetric profile was proven in \cite[Lemma~6.2]{gallot}.
\end{proof}

For a convex body $C$, the continuity of the isoperimetry profile of $C$ will be a trivial consequence of the concavity of $I_C$ proven in Corollary~\ref{cor:concavidad de perfil}.

The known results on the regularity of isoperimetric regions are summarized in the following Lemma.

\begin{lemma}[{\cite{MR684753}, \cite{MR862549}, \cite[Thm.~2.1]{MR1674097}}]
\label{lem:n-7}
\mbox{}
Let $C\subset \rr^{n+1}$ a convex body and $E\subset C$ an isoperimetric region.
Then $\ptl E\cap\intt(C) = S_0\cup S$, where  $S_0\cap S=\emptyset$ and
\begin{enum}
\item $S$ is an embedded $C^{\infty}$ hypersurface of constant mean curvature.\item $S_0$ is closed and $H^{s}(S_0)=0$ for any $s>n-7$.
\end{enum}
Moreover, if the boundary of $C$ is of class $C^{2,\alpha}$ then $\cl{\ptl E\cap\intt(C)}=S\cup S_0$, where
\begin{enum}
\item[(iii)] $S$ is an embedded $C^{2,\alpha}$ hypersurface of constant mean curvature
\item[(iv)] $S_0$ is closed and $H^s(S_0)=0$ for any $s>n-7$
\item[(v)] At points of $ S \cap \ptl C$,  $ S$ meets $\ptl C$ orthogonally.
\end{enum}
\end{lemma}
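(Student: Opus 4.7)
\emph{Proof proposal.} The plan is to explain how the isoperimetric minimality of $E$ feeds into the cited regularity theorems of geometric measure theory; no new argument is needed beyond assembling the pieces.

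First I would observe that any isoperimetric region $E\subset C$ is a $(\Lambda,r_0)$-perimeter quasi-minimizer in $\intt(C)$: given a competitor $F\subset C$ with $E\triangle F\subset\subset B(x,r_0)\cap\intt(C)$ for some $x\in\intt(C)$ and $r_0$ small, one restores the volume by a compactly supported perturbation in a fixed reference ball away from $B(x,r_0)$, at a perimeter cost linear in $\big|\vol{F}-\vol{E}\big|\le \vol{E\triangle F}$. This yields an inequality $\ppc(E)\le \ppc(F)+\Lambda\,\vol{E\triangle F}$ for some $\Lambda=\Lambda(C,E)>0$.

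With quasi-minimality in hand, the Gonz\'alez--Massari--Tamanini theory \cite{MR684753} gives that the reduced boundary $S:=\ptl^*E\cap\intt(C)$ is relatively open in $\ptl E\cap\intt(C)$ and is a $C^{1,\alpha}$ hypersurface, while $S_0:=(\ptl E\setminus\ptl^*E)\cap\intt(C)$ is closed with $\h^s(S_0)=0$ for every $s>n-7$ by the standard Federer dimension reduction of tangent cones. The volume-constrained first variation identifies the distributional mean curvature of $S$ with a constant Lagrange multiplier, and elliptic bootstrap then promotes $S$ to $C^\infty$. This proves (i) and (ii).

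For (iii)--(v), the $C^{2,\alpha}$ hypothesis on $\ptl C$ places us in the setting of Gr\"uter's free boundary regularity theorem \cite{MR862549}, adapted to the volume-constrained partitioning problem in \cite[Thm.~2.1]{MR1674097}: $S$ extends to a $C^{2,\alpha}$ hypersurface up to $S\cap\ptl C$, away from a singular set still of $\h^s$-measure zero for $s>n-7$. Orthogonality (v) is the Neumann condition coming from the first variation: admissible variations must preserve $F\subset C$ to first order, which forces the conormal of $S$ along $S\cap\ptl C$ to be tangent to $\ptl C$. The main obstacle, already resolved in the cited papers, is the boundary excess-decay estimate needed to carry $C^{1,\alpha}$ regularity of $S$ across $S\cap\ptl C$; this step genuinely uses the $C^{2,\alpha}$ smoothness of $\ptl C$ and cannot be performed for a merely convex obstacle, which is precisely why the statement bifurcates into the interior conclusions valid for every convex body and the up-to-boundary conclusions requiring $C^{2,\alpha}$ smoothness.
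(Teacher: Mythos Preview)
The paper does not give its own proof of this lemma: it is stated with the three citations and no \texttt{proof} environment follows. The authors simply record it as a known result from the literature. Your sketch is a correct and standard account of how the cited references assemble to yield the statement---the $(\Lambda,r_0)$-quasi-minimality via a volume-fixing variation, interior $C^{1,\alpha}$ regularity and dimension estimate for the singular set from Gonzalez--Massari--Tamanini, upgrade to $C^\infty$ with constant mean curvature via the first variation and elliptic bootstrap, and the boundary regularity and orthogonality from Gr\"uter and Sternberg--Zumbrun under the $C^{2,\alpha}$ hypothesis. In that sense you have supplied more than the paper does; there is nothing further to compare.
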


\section{Hausdorff and Lipschitz convergence in the space of convex bodies}

As a first step in our study of the isoperimetric profile of a convex body, we need to~prove that Hausdorff convergence of convex bodies implies Lipschtz convergence. We shall also prove the converse replacing the Hausdorff distance by the weak Hausdorff distance as defined in \eqref{eq:shd}. We need first some preliminary results for convex sets.

Given a convex body $C\subset\rr^n$ containing $0$ in its interior, its \emph{radial function} $\rho(C,\cdot):\esf^n\to\rr$ is defined by
\[
\rho(C,u)=\max\{\la\ge 0:\la u\in C\}.
\]
From this definition it follows that $\rho(C,u)u\in \ptl C$ for all $u\in\esf^n$.

\begin{lemma}
\label{lem:radial}
Let $C\subset\rr^{n+1}$ be a convex body so that $B(0,r)\subset C\subset B(0,R)$. Then the radial function $\rho(C,\cdot):\esf^n\to\rr$ is $R^2/r$-lipschitz.
\end{lemma}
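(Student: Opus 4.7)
The plan is a direct supporting-hyperplane argument. Fix $u_1,u_2\in\esf^n$, write $\rho_i=\rho(C,u_i)$ and $p_i=\rho_i u_i\in\ptl C$, and, exchanging the roles of $u_1$ and $u_2$ if necessary, assume $\rho_1\ge\rho_2\ge r$. It then suffices to prove
\[
\rho_1-\rho_2 \;\le\; \frac{R^2}{r}\,|u_1-u_2|.
\]

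The first step is to choose a supporting hyperplane to $C$ at the point $p_2$ of \emph{smaller} radius, with unit outward normal $n$, so that
\[
\langle x,n\rangle \;\le\; \langle p_2,n\rangle \;=\; \rho_2\,\langle u_2,n\rangle \qquad \text{for every } x\in C.
\]
Testing this inequality at $x=rn\in\clb(0,r)\subset C$ yields the crucial positivity estimate $\langle u_2,n\rangle \ge r/\rho_2 \ge r/R$, while testing it at $x=p_1\in C$ gives $\rho_1\langle u_1,n\rangle\le\rho_2\langle u_2,n\rangle$.

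The rest is a one-line algebraic manipulation: adding and subtracting $\rho_1\langle u_1,n\rangle$ produces
\[
(\rho_1-\rho_2)\langle u_2,n\rangle \;=\; \rho_1\langle u_2-u_1,n\rangle + \bigl(\rho_1\langle u_1,n\rangle-\rho_2\langle u_2,n\rangle\bigr) \;\le\; \rho_1\,|u_1-u_2|,
\]
using Cauchy--Schwarz on the first summand (recall $|n|=1$) and the supporting-hyperplane inequality on the second. Dividing by the positive quantity $\langle u_2,n\rangle\ge r/\rho_2$ gives
\[
\rho_1-\rho_2 \;\le\; \frac{\rho_1\rho_2}{r}\,|u_1-u_2| \;\le\; \frac{R^2}{r}\,|u_1-u_2|,
\]
as required. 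The only delicate point, and the step one has to get right, is to place the supporting hyperplane at $p_2$ rather than at $p_1$: this orientation is what simultaneously makes the supporting-hyperplane inequality into an upper bound for $\rho_1-\rho_2$ and forces the denominator $\langle u_2,n\rangle$ to be bounded below by $r/R$ via the inclusion $B(0,r)\subset C$; placing the hyperplane at $p_1$ instead produces an inequality pointing the wrong way.
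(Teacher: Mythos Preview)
Your proof is correct and complete. It is, however, a genuinely different argument from the one in the paper.

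The paper proceeds by polar duality: it observes that $\rho(C,\cdot)=1/h(C^*,\cdot)$, where $h(C^*,\cdot)$ is the support function of the polar body $C^*$; since $B(0,1/R)\subset C^*\subset B(0,1/r)$, the support function is $1/r$-Lipschitz and bounded below by $1/R$, and the reciprocal lemma (Lemma~\ref{lem:lipschitz}(i)) gives the constant $(1/r)/(1/R)^2=R^2/r$. Your argument is the direct, self-contained version of this: the supporting hyperplane at $p_2$ with outward normal $n$ plays exactly the role of the boundary point of $C^*$ in direction $u_2$, and your estimate $\langle u_2,n\rangle\ge r/R$ is the hands-on form of $h(C^*,u_2)\ge 1/R$. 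What your route buys is that no polar-duality machinery needs to be invoked, and you actually obtain the slightly sharper intermediate bound $\rho_1\rho_2/r$ before relaxing to $R^2/r$; what the paper's route buys is brevity once the standard facts about support functions are taken for granted.
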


\begin{proof}
Let $C^*$ be the polar body of $C$, \cite[\S~1.6]{sch}. Theorem~1.6.1 in \cite{sch} implies that $(C^*)^*=C$ and that $B(0,1/R)\subset C^*\subset B(0,1/r)$. Let $h(C^*,\cdot)$ be the support function of $C^*$. Using $(C^*)^*=C$, Remark~1.7.7 in \cite{sch} implies
\[
\rho(C,u)=\frac{1}{h(C^*,u)}.
\]
By Lemma~1.8.10 in \cite{sch} the function $h(C^*,\cdot)$ is $1/r$-lipschitz. Since $h(C^*,\cdot)\ge 1/R$, we conclude from Lemma~\ref{lem:lipschitz} that $\rho(C,\cdot)$ is an $R^2/r$-lipschitz function.
\end{proof}

\begin{lemma}
\label{lem:radialconv}
Let $\{C_i\}_{i\in\nn}$ be a sequence of convex bodies converging in Hausdorff distance to a convex body $C$. We further assume that there exist $r$, $R>0$ such that $B(0,r)\subset \intt(C_i)\subset B(0,R)$ for all $i\in\nn$, and $B(0,r)\subset \intt(C)\subset B(0,R)$. Then
\[
\lim_{i\to\infty} \sup_{u\in\esf^n} |\rho(C_i,u)-\rho(C,u)|=0.
\]
\end{lemma}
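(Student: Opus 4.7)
The strategy is Arzelà--Ascoli on the compact sphere $\esf^n$. The plan is to establish (a) equicontinuity of the family $\{\rho(C_i,\cdot)\}_{i\in\nn}$ together with $\rho(C,\cdot)$, and (b) pointwise convergence $\rho(C_i,u)\to\rho(C,u)$ for each $u\in\esf^n$; combining (a) and (b) by a standard finite $\de$-net argument on $\esf^n$ then yields uniform convergence.

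Part (a) is immediate from Lemma~\ref{lem:radial}: under the hypothesis $B(0,r)\subset C_i\subset B(0,R)$ (and the same for $C$), each $\rho(C_i,\cdot)$ is $R^2/r$-lipschitz, with a constant independent of $i$.

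For (b), fix $u\in\esf^n$ and write $\la_i=\rho(C_i,u)$, $\la=\rho(C,u)$. To prove $\limsup\la_i\le\la$, extract a convergent subsequence $\la_{i_k}\to\mu$; the points $\la_{i_k}u\in C_{i_k}$ then converge to $\mu u$, and Hausdorff convergence combined with the closedness of $C$ forces $\mu u\in C$, hence $\mu\le\la$. For $\liminf\la_i\ge\la$, fix $\eps>0$ and consider $p=(\la-\eps/2)u\in\intt(C)$. By Hausdorff convergence, for every prescribed $\de>0$ and $i$ large enough there exists $p_i\in C_i$ with $|p-p_i|<\de$. Since $0\in C_i$, convexity gives $[0,p_i]\subset C_i$, hence $\rho(C_i,p_i/|p_i|)\ge|p_i|\ge\la-\eps/2-\de$. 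For $\eps$ and $\de$ small, $|p|$ and $|p_i|$ are bounded below by $r/2$, so the normalization $q\mapsto q/|q|$ is lipschitz near $p$ and $\bigl|u-p_i/|p_i|\bigr|\le K\de$ for a constant $K=K(r)$. The uniform $R^2/r$-lipschitz bound on $\rho(C_i,\cdot)$ then gives
\[
\rho(C_i,u)\ge\rho(C_i,p_i/|p_i|)-\frac{R^2 K}{r}\,\de\ge\la-\frac{\eps}{2}-\de-\frac{R^2 K}{r}\,\de,
\]
which exceeds $\la-\eps$ once $\de$ is chosen small in terms of $\eps,r,R$.

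The only delicate point is the lower direction in (b): Hausdorff convergence merely asserts that points of $C$ are \emph{close} to $C_i$, not that they belong to $C_i$. The interior cone argument above bypasses this by using the common inner ball $B(0,r)\subset C_i$ and convexity to push the approximating point $p_i$ back along the ray through $u$, thereby converting Hausdorff distance into a quantitative lower bound on $\rho(C_i,u)$.
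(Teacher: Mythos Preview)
Your proof is correct, but it follows a different route from the paper's. The paper argues by contradiction: if the uniform convergence failed, one could extract $u_i\to u$ with $|\rho(C_i,u_i)-\rho(C,u_i)|\ge\eps$, set $x_i=\rho(C_i,u_i)u_i\in\ptl C_i$ and $y_i=\rho(C,u_i)u_i\in\ptl C$, pass to subsequential limits $x,y$, and observe that Hausdorff convergence forces both $x$ and $y$ to lie on $\ptl C$ along the same ray from the interior point $0$, contradicting $|x-y|\ge\eps$. This argument never invokes Lemma~\ref{lem:radial}; it relies only on compactness and the elementary convex-body fact that a ray from an interior point meets the boundary exactly once.

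Your approach instead leverages Lemma~\ref{lem:radial} to obtain uniform equicontinuity, reduces to pointwise convergence, and closes with a $\de$-net argument. The upside is that your method is more quantitative (all constants are explicit in $r,R$) and shows clearly how the inner ball $B(0,r)$ enters through the Lipschitz bound; the paper's argument is shorter and more geometric, but less transparent about where the uniform control comes from. Both are perfectly valid.
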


\begin{proof}
We reason by contradiction. Assume there exists $\eps>0$ and $u_i\in\esf^n$ so that a subsequence satisfies
\[
|\rho(C_i,u_i)-\rho(C,u_i)|\ge\eps.
\]
Passing again to a subsequence we may assume that $u_i\to u\in\esf^n$. We define
\[
x_i=\rho(C_i,u_i)\,u_i\in\ptl C_i,\qquad y_i=\rho(C,u_i)\,u_i\in\ptl C.
\]
Since $\rho(C_i,\cdot)$ and $\rho(C,\cdot)$ are uniformly bounded, we may extract again convergent subsequences $x_i\to x$ and $y_i\to y$. Since $\ptl C$ is closed, we have $y\in\ptl C$. Since $C_i\to C$ in Hausdorff distance, we have $x\in\ptl C$ (it is straightforward to check that $x\not\in \rr^{n+1}\setminus C$, and that $x\not\in\intt(C)$ by Lemma~1.8.14 in \cite{sch}). Since $|x_i-y_i|\ge\eps$ we get $|x-y|\ge\eps$, but both $x$, $y$ belong to the ray emanating from $0$ with direction $u$. This is a contradiction since $0\in\intt(C)$, \cite[Lemma~1.1.8]{sch}.
\end{proof}

\begin{lemma}
\label{lem:Lip(f_i-f) to 0}
Let $\{f_i\}_{i\in\nn}$ be a sequence of convex functions defined on a convex open set $C$ and converging uniformly on $C$ to a convex function $f$.
\begin{enum}
\item  Let $\{x_i\}_{i\in\nn}$ be a sequence such that $x=\lim_{i\to\infty} x_i$. If $\nabla f_i(x_i)$, $\nabla f(x)$ exist for all $i\in\nn$, then $\nabla f_i(x_i) \to \nabla f(x)$.
\item $\Lip(f_i-f)\to 0$.
\item If $g$ is a convex function defined in a convex body $C$, then
\[
\Lip(g) =\sup_{z\in D}|\nabla g(z)|,
\]
where $D$ is the subset of $C$ (dense and of full measure) where $ \nabla g$ exists.
\end{enum}
\end{lemma}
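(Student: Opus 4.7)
The plan is to prove (i) via a subgradient limit argument, (iii) via a one-dimensional fundamental theorem of calculus along line segments, and then combine these to deduce (ii).

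For (i), the first step is to observe that $\{\nabla f_i(x_i)\}$ is bounded: by uniform convergence $f_i\to f$ on $C$, the $f_i$ have locally uniform oscillation bounds, and the standard estimate that a convex function whose oscillation is controlled on a ball is Lipschitz on a smaller concentric ball shows that $|\nabla f_i(x_i)|$ stays bounded for $x_i$ in any compact subset of $C$. Extracting a convergent subsequence $\nabla f_i(x_i)\to p$, and passing to the limit in the subgradient inequality $f_i(y)\ge f_i(x_i)+\langle\nabla f_i(x_i),y-x_i\rangle$ using uniform convergence and continuity of $f$ on $C$, one obtains $f(y)\ge f(x)+\langle p,y-x\rangle$ for every $y\in C$. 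Hence $p\in\ptl f(x)$; since $\nabla f(x)$ exists, $\ptl f(x)=\{\nabla f(x)\}$, forcing $p=\nabla f(x)$, and since every convergent subsequence of the bounded sequence $\{\nabla f_i(x_i)\}$ has the same limit, the whole sequence converges.

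For (iii), one direction $\sup_D|\nabla g|\le\Lip(g)$ is immediate from the definition of the gradient. For the reverse, given $x,y\in C$, the restriction of $g$ to the segment joining them is a convex function of one variable, hence absolutely continuous; by Fubini this segment meets the full-measure set $D$ at almost every parameter, and at such points the one-variable derivative coincides with $\langle\nabla g,y-x\rangle$. The fundamental theorem of calculus then yields $|g(y)-g(x)|\le|y-x|\,\sup_D|\nabla g|$, which gives the reverse inequality.

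To deduce (ii), I would apply the same FTC argument to the Lipschitz function $f_i-f$, obtaining $\Lip(f_i-f)=\mathrm{ess\,sup}_C|\nabla(f_i-f)|$. Arguing by contradiction, assume this does not tend to $0$; then one can extract common differentiability points $z_i\to z$ with $|\nabla f_i(z_i)-\nabla f(z_i)|\ge\varepsilon$, and part (i) combined with the continuity of $\nabla f$ on its domain of existence should force $\nabla(f_i-f)(z_i)\to 0$. The main obstacle I expect is precisely this last step: $\nabla f$ may fail to exist at the limit point $z$, and the delicate issue is to select the sequence $z_i$ so that $z$ lies in the differentiability set of $f$, allowing part (i) to be applied cleanly at $z$.
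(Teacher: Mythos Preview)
Your arguments for (i) and (iii) are correct and close to the paper's. For (i) the paper uses a directional contradiction rather than your subsequence/subgradient compactness, but the content is the same. For (iii) both proofs use the FTC along segments; the paper handles the possibility that a given segment misses $D$ on a set of positive $\mathcal{H}^1$-measure by perturbing to a nearby parallel segment (your Fubini remark does not quite justify this for a \emph{fixed} segment, but the repair is minor).

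The obstacle you flag in (ii) is genuine and is not removed by your proposed strategy. If $z_i\to z$ with $z\notin D_f$, then subsequential limits of both $\nabla f_i(z_i)$ and $\nabla f(z_i)$ lie in $\partial f(z)$, but $\partial f(z)$ may well contain two elements at distance $\ge\varepsilon/2$, so no contradiction results; and there is no mechanism to force $z\in D_f$, since the sets $\{|\nabla f_i-\nabla f|\ge\varepsilon/2\}$ may have arbitrarily small measure and accumulate at a non-differentiability point of $f$. The paper circumvents this by never passing to a single limit point. Starting from $|(f_i-f)(x_i)-(f_i-f)(y_i)|>\varepsilon\,|x_i-y_i|$, it parametrizes $[x_i,y_i]$ by $\lambda\in[0,1]$ and normalizes by $|x_i-y_i|$, obtaining one-variable functions $u_i,v_i$ with $|u_i(1)-v_i(1)|>\varepsilon$. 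After sliding to parallel segments on which $\nabla f_i$ and $\nabla f$ exist $\mathcal{H}^1$-a.e., part (i) together with the continuity of $\nabla f$ on its differentiability set gives $(u_i-v_i)'(\lambda)\to 0$ for a.e.\ $\lambda$, with a uniform bound coming from the local equi-Lipschitz control on the $f_i$; dominated convergence then forces $(u_i-v_i)(1)\to 0$, the desired contradiction. The integration over $\lambda$ is exactly what replaces your need for a single well-chosen limit point.
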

\begin{proof}
The proof of (i) is taken from \cite[Thm.~25.7]{roc}. We give it for completeness. Assume that $\nabla f_i(x_i)$ does not converge to $\nabla f(x)$. Then there exists $y\in\rr^n$ and $\eps >0$ such that either
\begin{equation}
\label{eq:eps}
\begin{split}
\escpr{\nabla f_i(x_i),y}-\escpr{\nabla f(x),y}&\ge \eps, \ \text{or}
\\
\escpr{\nabla f_i(x_i),y} -\escpr{\nabla f(x),y} &\le -\eps,
\end{split}
\end{equation}
holds for a subsequence.

Let us assume that the second inequality in \eqref{eq:eps} holds for a subsequence. For simplicity, we assume it holds for the whole sequence. Thus we have $\escpr{\nabla f_i(x_i),y} \le\escpr{\nabla f(x),y}  -\eps$ for any index $i$. Multiplying this inequality by $t<0$ we obtain $\escpr{\nabla f_i(x_i),ty}\ge \big(\escpr{\nabla f(x),y}-\eps\big)\,t$. From this inequality and the convexity of $f_i$ we get
\[
f_i(x_i+ty)-f_i(x_i)\ge \escpr{\nabla f_i(x_i),ty} \ge\big( \escpr{f(x),y}-\eps\big)\,t.
\]
Letting $i\to \infty$, taking into account that $f_i \to f$ uniformly, we find
\[
\frac{f(x+ty)-f(x)}{t}\le \langle \nabla f(x),y \rangle  -\eps
\]
Taking limits when $t\uparrow 0$ we get $\escpr{\nabla f(x),y}\le\escpr{\nabla f(x),y}-\eps$, and we reach a contradiction. The case of the first inequality in \eqref{eq:eps} is treated in the same way. This proves (i).

To prove (ii) we also reason by contradiction. So we assume there exists $\eps>0$ so that $\Lip(f_i-f)>\eps$ holds for a subsequence. For simplicity, we assume that every index $i$ satisfies this inequality. We can find sequences $\{x_i\}_{i\in\nn}$, $\{y_i\}_{i\in\nn}$ such that $x_i\neq y_i$ and
\begin{equation}
\label{eq:|(f_i-f)(x_i)-(f_i-f)(y_i)|}
|(f_i-f)(x_i)-(f_i-f)(y_i)|>\eps\, |x_i-y_i|\quad\text{for all}\ i\in\nn.
\end{equation}
Passing again to a subsequence if necessary, we assume that there are points $x$, $y$ such that $x=\lim_{i\to\infty}x_i$, $y=\lim_{i\to\infty}y_i$.

We observe that it can be assumed that both $\nabla f_i$ and $\nabla f$ are defined $\hh^1$-almost everywhere in the segment $[x_i,y_i]$: otherwise we consider a right circular cylinder $D\times [x_i,y_i]$ of axis $[x_i,y_i]$ so that, in every segment parallel to $[x_i,y_i]$ of height $|x_i-y_i|$, inequality \eqref{eq:|(f_i-f)(x_i)-(f_i-f)(y_i)|} is satisfied by its extreme points. Since the set where the gradients $\nabla f_i$, $\nabla f$ exist has full $\hh^{n+1}$-measure in $D\times [x_i,y_i]$, \cite[Thm.~25.4]{roc}, Fubini's Theorem implies that $\hh^n$-almost everywhere in $D$, the gradients are $\hh^1$-almost everywhere defined. We replace $[x_i,y_i]$ by one of such segments if necessary.

For $\la\in [0,1]$, and $i\in\nn$, we define convex functions $u_i,v_i$ by
\begin{equation}
 u_i(\la):=\frac{f_i(x_i+\la(y_i-x_i))-f_i(x_i)}{|y_i-x_i|},\qquad
 \, v_i(\la):=\frac{f(x_i+\la(y_i-x_i))-f(x_i)}{|y_i-x_i|}.
\end{equation}
Hence \eqref{eq:|(f_i-f)(x_i)-(f_i-f)(y_i)|} is equivalent to
\begin{equation}
\label{eq:u_i(1)-v_i(1)}
\lim_{i\to\infty}(u_i(1)-v_i(1)) \ge\eps
\end{equation}
We easily find
\begin{equation}
(u_i(\la)-v_i(\la))'= f_i{'}(x_i+\la(y_i-x_i);\frac{x_i-y_i}{|x_i-y_i|})-f'(x_i+\la(y_i-x_i);\frac{x_i-y_i}{|x_i-y_i|}),
\end{equation}
where the derivative $f'(p;u)$ of the convex function $f$ at the point $p$ in the direction of $u$ is defined as in \cite[p.~213]{roc}. At the points where both $\nabla f_i,\nabla f$ exist we get
\begin{equation*}
\big(u_i(\la)-v_i(\la)\big)'=\escpr{(\nabla f_i-\nabla f)(x_i+\la(y_i-x_i),\frac{x_i-y_i}{|x_i-y_i|}},
\end{equation*}
and
\begin{equation*}
|(u_i(\la)-v_i(\la))'|\le|\nabla f_i(x_i+\la(y_i-x_i))-\nabla f(x_i+\la(y_i-x_i))|.
\end{equation*}
By (i) and \cite[Thm.~25.5]{roc} we have $\lim_{i\to\infty}(u_i(\la)-v_i(\la))'=0$. By \cite[Thm.~10.6]{roc}, $\Lip(f_i)$ is uniformly bounded. So $(u_i-v_i)'$ is  bounded by a constant by (iii). 
Then by 
the Dominated Convergence Theorem, \cite[Corollary~24.2.1]{roc}, and the fact that $u_i(0)=v_i(0)=0$, we get
\[
\lim_{i\to \infty}(u_i(1)-v_i(1))=\lim_{i\to \infty}\int_0^1(u_i(\la)-v_i(\la))'d\la=0,
\]
which, together with \eqref{eq:u_i(1)-v_i(1)}, gives a contradiction. Hence $\lim_{i\to\infty}\Lip(f_i-f)=0$.

To prove (iii), let $z\in D$. There is $w\in \esf^n$ such that $|\nabla g(z)|=\langle \nabla g(z),w \rangle$. Hence
\[
|\nabla g(z)|=\Big|\lim_{\la\to 0}\frac{g(z+\la w)-g(z)}{\la}\Big|\le \sup_{x\not = y}\frac{|g(x)-g(y)|}{|x-y|}=\Lip(g).
\]
To prove the reverse inequality, take $x,y\in C$ and assume for the moment that $\nabla g$ exists $\hh^1$-almost everywhere in the segment $[x,y]$. Then by \cite[Corollary~24.2.1]{roc} we have
\[
|g(x)-g(y)|=\Big|\int_0^1\langle\nabla g(x+\la(y-x),y-x \rangle d\la\Big| \le \sup_{z\in D}|\nabla g(z)||x-y|
\]
If  $\nabla g$ does not exist $\hh^1$-almost everywhere in the segment $[x,y]$, we can make an approximation argument, as in the proof of (ii), with segments parallel to $[x,y]$, where $\nabla g$ exists $\hh^1$- almost everywhere, to conclude the proof.
\end{proof}

Now we prove that Hausdorff convergence of a sequence of convex bodies implies Lipschitz convergence.

\begin{theorem}
\label{thm:lipschitz}
Let $\{C_i\}_{i\in\nn}$ be a sequence of convex bodies in $\rr^{n+1}$ that converges in Hausdorff distance to a convex body $C$. Then $\{C_i\}_{i\in\nn}$ converges to $C$ in Lipschitz distance.
\end{theorem}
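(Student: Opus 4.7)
The plan is to construct an explicit sequence of bilipschitz maps $f_i\colon C\to C_i$ and to show both $\Lip(f_i)\to 1$ and $\Lip(f_i^{-1})\to 1$. After a preliminary translation I may assume $0\in\intt(C)$; Hausdorff convergence then supplies $0<r<R$ and $i_0\in\nn$ such that $B(0,r)\subset\intt(C)\cap\intt(C_i)$ and $C\cup C_i\subset B(0,R)$ for every $i\ge i_0$. I then set
\[
f_i(x)=\frac{\rho(C_i,x/|x|)}{\rho(C,x/|x|)}\,x\qquad(x\ne 0),\qquad f_i(0)=0,
\]
the Bokowski--Sperner-type radial rescaling, which maps each ray from $0$ inside $C$ affinely onto the corresponding ray inside $C_i$ and is therefore a bijection. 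Its inverse $f_i^{-1}$ is the analogous map with the roles of $C$ and $C_i$ swapped, so it is enough to prove $\Lip(f_i)\to 1$.

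To show $\Lip(f_i)\to 1$ I would prove the equivalent statement $\Lip(g_i)\to 0$, where $g_i(x):=f_i(x)-x=(\tilde\phi_i(x)-1)\,x$, $\tilde\phi_i(x):=\phi_i(x/|x|)$ and $\phi_i:=\rho(C_i,\cdot)/\rho(C,\cdot)$ on $\esf^n$. The decomposition
\[
g_i(x)-g_i(y)=(\tilde\phi_i(x)-1)(x-y)+(\tilde\phi_i(x)-\tilde\phi_i(y))\,y,
\]
together with the elementary bound $\bigl|\tfrac{x}{|x|}-\tfrac{y}{|y|}\bigr|\le 2\,|x-y|/\max(|x|,|y|)$ for $x,y\ne 0$, shows that the factor $|y|$ in the second term cancels the $1/\max(|x|,|y|)$, giving
\[
|g_i(x)-g_i(y)|\le\bigl(\|\phi_i-1\|_\infty+2\,\Lip_{\esf^n}(\phi_i-1)\bigr)|x-y|;
\]
the boundary case $y=0$ is handled directly by $|g_i(x)|\le\|\phi_i-1\|_\infty\,|x|$. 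Thus the task reduces to two scalar claims on $\esf^n$: (a) $\|\phi_i-1\|_\infty\to 0$, and (b) $\Lip_{\esf^n}(\phi_i-1)\to 0$.

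Claim (a) follows from Lemma~\ref{lem:radialconv}, the uniform lower bound $\rho(C,\cdot)\ge r$ on $\esf^n$, and the calculus rules of Lemma~\ref{lem:lipschitz}. Claim (b) is the decisive step: via polar bodies one writes $\rho(C_i,\cdot)=1/h(C_i^*,\cdot)$ as in the proof of Lemma~\ref{lem:radial}, so the functions $h(C_i^*,\cdot)$ are convex and, by Hausdorff continuity of polarity under the common interior ball $B(0,r)$, converge uniformly to $h(C^*,\cdot)$ on $\esf^n$, hence on any open ball around the origin by $1$-homogeneity. Lemma~\ref{lem:Lip(f_i-f) to 0}(ii) applied to these convex functions yields $\Lip(h(C_i^*,\cdot)-h(C^*,\cdot))\to 0$, and the rules (i)--(iii) of Lemma~\ref{lem:lipschitz} propagate this through the operations $h\mapsto 1/h\mapsto \rho(C_i,\cdot)/\rho(C,\cdot)$ to deliver (b). The main obstacle is precisely this step: uniform convergence of Lipschitz functions does not in general force vanishing Lipschitz norm of the difference, so the convexity supplied by the support-function framework is essential.
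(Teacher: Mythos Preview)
Your argument is correct, and the decisive step --- passing to support functions of the polar bodies and invoking Lemma~\ref{lem:Lip(f_i-f) to 0}(ii) to force $\Lip(h(C_i^*,\cdot)-h(C^*,\cdot))\to 0$, then pushing this through the algebraic rules of Lemma~\ref{lem:lipschitz} --- is exactly the engine of the paper's proof as well. The difference lies in the choice of bilipschitz map. You use the pure radial rescaling $f_i(x)=\big(\rho(C_i,x/|x|)/\rho(C,x/|x|)\big)x$, whereas the paper uses the piecewise map \eqref{eq:fi} that equals the identity on $\clb(0,r)$ and interpolates linearly along rays outside. Your map is conceptually simpler and your direct estimate $\Lip(f_i-\mathrm{id})\le\|\phi_i-1\|_\infty+2\Lip_{\esf^n}(\phi_i-1)$, obtained via the cancellation in $\big|\tfrac{x}{|x|}-\tfrac{y}{|y|}\big|\le 2|x-y|/\max(|x|,|y|)$, is a clean way to finish. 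The paper's more elaborate map is chosen not for the sake of Theorem~\ref{thm:lipschitz} itself but because the identity-on-a-ball feature is reused later: it gives the explicit $R/r$-dependent Lipschitz bound of Corollary~\ref{cor:estilipconst}, and in the proof of Theorem~\ref{thm:optinsmalvol} one needs maps that are the identity on a fixed region so that \eqref{eq:percone} holds. For the statement at hand your route is entirely adequate and arguably more transparent; just be aware that if you continue into the later sections you will need to revisit the construction.
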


\begin{proof}
Translating the whole sequence and its limit we assume that $0\in\intt(C)$. Let $r>0$ so~that $\clb(0,2r)\subset\intt(C)$. By \cite[Lemma~1.8.14]{sch}
and the convergence of $C_i$ to $C$ in Hausdorff distance, there exists $i_0\in\nn$ such that $\clb(0,r)\subset\intt(C_i)$ for $i\ge i_0$. Let us denote by $\rho_i$ and $\rho$ the radial functions $\rho(C_i,\cdot)$ and $\rho(C,\cdot)$, respectively. Since the sequence $\{C_i\}_{i\in\nn}$ converges to $C$ in Hausdorff distance, there exists $R>0$ so that $\bigcup_{i\in\nn}C_i\cup C\subset B(0,R)$.

For $i\ge i_0$, we define a map $f_i:C\to C_i$ by
\begin{equation}
\label{eq:fi}
f_i(x)=\begin{cases}
x, & |x|\le r,
\\
\displaystyle r\,\frac{x}{|x|}+(|x|-r)\,\frac{\rho_i\big(\frac{x}{|x|}\big)-r}{\rho\big(\frac{x}{|x|}\big)-r}\,\frac{x}{|x|}, & |x|\ge r.
\end{cases}
\end{equation}
Using Lemmata~\ref{lem:lipschitz} and \ref{lem:radial} we obtain that $f_i$ is a lipschitz function. The inverse mapping can be defined exchanging the roles of $\rho_i$ and $\rho$ to conclude that $f_i$ is a bilipschitz map. The function $f_i$ can be rewritten as
\begin{equation}
\label{eq:fi2}
f_i(x)=x+\Big( 1-\frac{\rho_i\big(\frac{x}{|x|}\big)-r}{\rho\big(\frac{x}{|x|}\big)-r} \Big)\,(r-|x|)\,\frac{x}{|x|},\qquad|x|\ge r.
\end{equation}

To show that the sequence $\{C_i\}_{i\in\nn}$ converges in Lipschitz distance to $C$, it is enough to prove that both $\Lip(f_i)$, $\Lip(f_i^{-1})$ converge to $1$. We shall show that
\begin{equation}
\label{eq:limrhoi}
\lim_{i\to\infty}\Lip\Big( 1-\frac{\rho_i\big(\frac{x}{|x|}\big)-r}{\rho\big(\frac{x}{|x|}\big)-r} \Big)=0,
\end{equation}
and the corresponding inequality interchanging $\rho_i$ and $\rho$. From \eqref{eq:limrhoi} and the expression of $f_i$ given by \eqref{eq:fi2} we would get $\limsup_{i\to\infty}\Lip(f_i)\le 1$. Since $\Lip(f_i)\ge\Lip(f_i|_{\clb(0,r)})=1$ we obtain $1\le\liminf_{i\to\infty}\Lip(f_i)$. Crossing both inequalities we would have $\lim_{i\to\infty}\Lip(f_i)=1$. The same argument would work for $f_i^{-1}$.

Let us now prove \eqref{eq:limrhoi}. In what follows we shall assume that $\rho, \rho_i$ have $\esf^n$ as their~domain of definition. As $\rho-r$ is bounded from below, again by Lemma~\ref{lem:lipschitz}, it is enough to prove $\lim_{i\to\infty}\Lip(\rho_i-\rho)=0$. Let us denote by $h_i^*,h^*$ the support functions of the polar sets $C_i^*,C^*$ of $C_i,C$, respectively. By \cite[Remark~1.7.7]{sch}, $h_i^*=1/\rho_i$. Since $\rho_i$ is uniformly bounded from below, again by Lemma~\ref{lem:lipschitz}, it is enough to check that that $\Lip(h_i^*-h^*)\to 0$. By Lemma~\ref{lem:radialconv}, the convex functions $h_i^*$ converge pointwise to $h^*$. Lemma~\ref{lem:Lip(f_i-f) to 0} then implies that $\Lip(h_i^*-h^*)=0$.
\end{proof}

\begin{remark}
Observe that the map given by \eqref{eq:fi} is defined in all of $\rr^{n+1}$ and takes $C$ onto $C_i$ and $\rr^{n+1}\setminus C$ onto $\rr^{n+1}\setminus C_i$.
\end{remark}

\begin{remark}
\label{rem:lipschitz}
If $f:C_1\to C_2$ is a bilipschitz map between convex bodies of $\rr^{n+1}$, then $g:\la C_1\to \la C_2$, defined by $g(x)=\la f(\frac{x}{\la})$,  is also bilipschitz and satisfies $\Lip(f)=\Lip(g)$, $\Lip(f^{-1})=\Lip(g^{-1})$.
\end{remark} 

\begin{remark}
Let $C$, $C'\subset\rr^{n+1}$ two convex bodies so that $\delta(C,C')>0$, $d_L(C,C')>0$ (it is enough to consider two non-isometric convex bodies). For $i\in\nn$, we have
\[
d_L(iC,iC')=d_L(i^{-1}C,i^{-1}C')=d_L(C,C').
\]
On the other hand
\[
\delta(iC,iC')=i\,\delta(C,C')\to +\infty;\qquad \delta(i^{-1}C,i^{-1}C')=i^{-1}\delta(C,C')\to 0.
\]
Hence Lipschitz and Hausdorff distances will not be equivalent in a subset of the space of convex bodies unless we impose uniform bounds on the circumradius and the inradius.
\end{remark}

Now we prove that the convergence of a sequence of convex bodies in Lipschitz distance, together with an upper bound on the circumradii of the elements of the sequence, implies the convergence of a subsequence in Hausdorff distance to a convex body isometric to the Lipschitz limit. We recall that Lipschitz convergence implies Gromov-Hausdorff convergence, see \cite[Prop.~3.7]{grom}, \cite[Ex.~7.4.3]{bbi}. 

\begin{theorem}
\label{thm:equiv}
Let $\{C_i\}_{i\in\nn}$ be a sequence of convex bodies converging to a convex body $C$ in Lipschitz distance. Then $\{C_i\}_{i\in\nn}$ converges to $C$ in weak Hausdorff distance.
\end{theorem}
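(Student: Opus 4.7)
The plan is to argue by contradiction, extract a Hausdorff-convergent subsequence after suitable translations, and then use Theorem~\ref{thm:lipschitz} together with the fact that $d_L(C,C')=0$ characterizes isometry classes.

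First I would extract the basic uniform bounds from the hypothesis. By the definition of Lipschitz convergence there exist bilipschitz maps $f_i\colon C_i\to C$ with $\Lip(f_i),\Lip(f_i^{-1})\to 1$. Lemma~\ref{lem:bilip} together with $f_i^{-1}(C)=C_i$ gives
\[
\diam(C_i)\le\Lip(f_i^{-1})\,\diam(C),\qquad \vol{C_i}\ge \Lip(f_i)^{-(n+1)}\vol{C},
\]
so both $\diam(C_i)$ is uniformly bounded above and $\vol{C_i}$ is uniformly bounded away from $0$ for $i$ large.

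Now suppose, for contradiction, that $\delta_S(C_i,C)\not\to 0$. Then there exist $\eps_0>0$ and a subsequence (still denoted $\{C_i\}$) with $\delta_S(C_i,C)\ge\eps_0$. Pick isometries $h_i\in\text{Isom}(\rr^{n+1})$ (e.g.\ translations) so that all the bodies $\widetilde C_i:=h_i(C_i)$ lie in a common closed ball $\clb(0,R)$; this is possible because $\diam(C_i)$ is uniformly bounded. By Blaschke's Selection Theorem, \cite[Thm.~1.8.4]{sch}, a subsequence of $\{\widetilde C_i\}$ converges in Hausdorff distance to some compact convex set $C'\subset\rr^{n+1}$. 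Since the volume is continuous under Hausdorff convergence and $\vol{\widetilde C_i}=\vol{C_i}$ is bounded below by a positive constant, $C'$ has non-empty interior and is therefore a convex body.

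Next I would invoke Theorem~\ref{thm:lipschitz}: the Hausdorff convergence $\widetilde C_{i_k}\to C'$ implies Lipschitz convergence $\widetilde C_{i_k}\to C'$. Because $d_L$ is invariant under isometries, $d_L(\widetilde C_{i_k},C)=d_L(C_{i_k},C)\to 0$, so by the triangle inequality $d_L(C,C')=0$. Hence $C$ and $C'$ are isometric, i.e.\ there exists $g\in\text{Isom}(\rr^{n+1})$ with $g(C')=C$. Applying $g$ to the Hausdorff convergence of $\widetilde C_{i_k}=h_{i_k}(C_{i_k})$ to $C'$ and using that $g\circ h_{i_k}$ is still an isometry, we obtain
\[
\delta_S(C_{i_k},C)\le \delta\bigl(g(h_{i_k}(C_{i_k})),C\bigr)\longrightarrow 0,
\]
contradicting $\delta_S(C_i,C)\ge\eps_0$.

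The only delicate point is verifying that the limit $C'$ is itself a convex body (has interior): this follows from the uniform positive lower bound on $\vol{C_i}$ and the continuity of Lebesgue measure under Hausdorff convergence of uniformly bounded convex sets. Everything else is a standard compactness-plus-uniqueness argument; the key inputs are Blaschke's Selection Theorem, Theorem~\ref{thm:lipschitz}, and the isometry-characterization property of the Lipschitz distance recorded after \eqref{eq:ld}.
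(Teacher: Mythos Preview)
Your argument is correct, but it follows a genuinely different route from the paper's proof. The paper works directly with the bilipschitz maps: after translating the $C_i$ into a common ball it applies the Arzel\`a--Ascoli theorem to the sequence $f_i:C\to C_i$ (equicontinuous, uniformly bounded image) to extract a uniform limit $f:C\to\rr^{n+1}$, shows that $C_i=f_i(C)$ converges to $f(C)$ in the Kuratowski (hence Hausdorff) sense, and then verifies that $f$ is an isometry by passing to the limit in $|x-y|\le\Lip(f_i^{-1})\,|f_i(x)-f_i(y)|$. Your proof instead applies Blaschke selection to the translated bodies, obtains a Hausdorff limit $C'$, and then closes the loop by invoking Theorem~\ref{thm:lipschitz} and the triangle inequality for $d_L$ to deduce $d_L(C,C')=0$, hence $C'$ isometric to $C$. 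Your version is arguably more economical and makes the two theorems into a clean pair of converse implications; the paper's version is self-contained (it does not use Theorem~\ref{thm:lipschitz}) and produces the limiting isometry explicitly rather than appealing to the black-box fact that $d_L(C,C')=0$ characterizes isometry classes. Note that this last fact, which you cite from the discussion after \eqref{eq:ld}, is essentially what the paper's Arzel\`a--Ascoli argument is proving along the way, so your proof trades one compactness argument (on maps) for another (on bodies) plus an appeal to Theorem~\ref{thm:lipschitz}.
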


\begin{proof}
Let $f_i:C\to C_i$ be a sequence of bilipschitz maps with $\Lip(f_i)$, $\Lip(f_i^{-1})\to 1$. Then $\text{diam}(C_i)$ are uniformly bounded, so that translating the sets $C_i$ we may assume they are uniformly bounded. Applying the Arzel\`a-Ascoli Theorem, a subsequence of $f_i$ uniformly converges to a lipschitz map $f:C\to\rr^{n+1}$. We shall assume the whole sequence converges. The sequence $C_i=f_i(C)$ converges to the compact set $f(C)$ in the sense of Kuratowski \cite[Def.~4.4.13]{amb} and so converges to $f(C)$ in Hausdorff distance by \cite[Prop.~4.4.14]{amb}. To check that $C_i$ converges to $f(C)$ in the sense of Kuratowski we take $x=\lim_{k\to\infty} f_{i_k}(x_{i_k})$, with  $x_{i_k}\in C$, and we extract a convergent subsequence of $x_{i_k}$ to some $x_0\in C$ to get $x=f(x_0)\in f(C)$; on the other hand, every $x\in f(C)$ is the limit of the sequence of points $f_i(x)\in C_i$.

Since $f_i\to f$ and $\Lip(f_i)\to 1$, Lemma \ref{lem:lipschitz}  implies $\Lip(f)\le 1$ and $|f(x)-f(y)|\le |x-y|$ for any $x,y\in C$. On the other hand, taking limits when $i\to\infty$ in the inequalities
\[
|x-y|=|f_i^{-1}(f_i(x))-f_i^{-1}(f_i(y))|\le\Lip(f_i^{-1})\,|f_i(x)-f_i(y)|
\]
we get $|x-y|\le |f(x)-f(y)|$ and so $f$ is an isometry. This arguments shows that any subsequence of $\{C_i\}_{i\in\nn}$ has a convergent subsequence in weak Hausdorff distance to $C$, which is enough to conclude that $\lim_{i\to\infty}\de_S(C_i,C)=0$.
\end{proof}

In the next result we shall obtain a geometric upper bound for the lipschitz constant of the  map built in the proof of Theorem~\ref{thm:lipschitz}. Observe that the the same bound holds for the inverse mapping, which satisfies the same geometrical condition.

\begin{corollary}
\label{cor:estilipconst}
Let $C$, $C'\subset\rr^{n+1}$ be convex bodies so that $\clb(0,2r)\subset C\cap C'$, $C\cup C'\subset \clb(0,R)\subset \rr^{n+1}$. Let $f:C \to C'$ be the bilipschitz map defined by
\begin{equation}
\label{eq:f}
f(x)=\begin{cases}
x, & |x|\le r,
\\
\displaystyle r\,\frac{x}{|x|}+(|x|-r)\,\frac{\rho'\big(\frac{x}{|x|}\big)-r}{\rho\big(\frac{x}{|x|}\big)-r}\,\frac{x}{|x|}, & |x|\ge r.
\end{cases}
\end{equation}
Then we have
\begin{equation}
\label{eq:lipff-1}
1\le\Lip(f), \Lip(f^{-1})\le 1+ \frac{R}{r}\Big(\frac{R}{r}-1\Big) \Big(\frac{R^2}{r^2}+1 \Big).
\end{equation}
\end{corollary}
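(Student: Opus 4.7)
The strategy is to write $f=\mathrm{id}+\varphi$ and reduce the problem to bounding $\Lip(\varphi)$. Expanding \eqref{eq:f} algebraically one obtains, for $|x|\ge r$,
$$\varphi(x)=\psi(u)\,H(x),\qquad u=\tfrac{x}{|x|},\qquad \psi(u)=\tfrac{\rho'(u)-\rho(u)}{\rho(u)-r},\qquad H(x)=(|x|-r)u,$$
extended by $\varphi\equiv 0$ on $B(0,r)$. A triangle-inequality argument, using that $\varphi$ vanishes on $\ptl B(0,r)$ together with the inequality $|y|-r\le|x-y|$ valid whenever $|x|\le r\le|y|$, gives $\Lip(f)\le 1+\Lip(\varphi|_{\{|x|\ge r\}})$; the lower bound $1\le\Lip(f)$ is immediate since $f$ is the identity on $B(0,r)$. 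Since $f^{-1}$ is obtained from \eqref{eq:f} by exchanging $\rho$ and $\rho'$, and the hypotheses are symmetric in $C$ and $C'$, any bound derived for $\Lip(f)$ applies verbatim to $\Lip(f^{-1})$.

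The heart of the proof is to apply Lemma \ref{lem:lipschitz}(iv) to the product $\varphi=\psi\cdot H$ on $\{|x|\ge r\}$, for which four estimates are needed. The sup bounds $\sup|\psi|\le (R-r)/r$ and $\sup|H|\le R-r$ follow from $\rho,\rho'\in[2r,R]$. The estimate $\Lip(H)\le 1$ is a direct verification: a short computation gives $|x-y|^2-|H(x)-H(y)|^2=2r(|x|+|y|-r)(1-\cos\theta)\ge 0$, where $\theta$ is the angle between $x$ and $y$. The delicate estimate is $\Lip(\psi)\le R^3/r^4$ on $\{|x|\ge r\}$: by Lemma \ref{lem:radial}, $\Lip(\rho),\Lip(\rho')\le R^2/r$ on $\esf^n$; writing $\psi=(\rho'-r)/(\rho-r)-1$ and iterating Lemma \ref{lem:lipschitz}(i) and (iii) gives, after the cancellation $(R-r)/r^3+1/r^2=R/r^3$, the bound $\Lip(\psi|_{\esf^n})\le R^3/r^3$. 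The inequality $|x/|x|-y/|y||\le|x-y|/\sqrt{|x|\,|y|}$ (from the law of cosines and AM-GM) shows that the radial projection is $(1/r)$-lipschitz on $\{|x|\ge r\}$, which upgrades the estimate to the claimed $\Lip(\psi)\le R^3/r^4$.

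Plugging the four estimates into Lemma \ref{lem:lipschitz}(iv),
$$\Lip(\varphi)\le \sup|H|\,\Lip(\psi)+\sup|\psi|\,\Lip(H)\le \tfrac{R-r}{r}\bigl(\tfrac{R^3}{r^3}+1\bigr).$$
Since $R/r\ge 2$, one has $R^3/r^3+1\le (R/r)(R^2/r^2+1)$, and combining with $\Lip(f)\le 1+\Lip(\varphi)$ yields \eqref{eq:lipff-1}. The main technical obstacle is organising the $\Lip(\psi|_{\esf^n})$ calculation so that the cancellation producing the clean $R^3/r^3$ factor actually occurs; a naive bound on $\psi=(\rho'-\rho)/(\rho-r)$ applied termwise gives a worse constant that would fail to propagate to the stated inequality.
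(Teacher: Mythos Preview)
Your proof is correct and follows essentially the same route as the paper: iterate the product and quotient rules of Lemma~\ref{lem:lipschitz} using the bound $\Lip(\rho),\Lip(\rho')\le R^2/r$ from Lemma~\ref{lem:radial}, then simplify. The only differences are organizational---you write $f=\mathrm{id}+\varphi$, prove $\Lip(H)\le 1$ by a direct computation (obtaining the slightly sharper intermediate bound $1+\tfrac{R-r}{r}\big(\tfrac{R^3}{r^3}+1\big)$ before relaxing it), and handle the cross case $|x|\le r\le |y|$ explicitly---whereas the paper decomposes $f=r\tfrac{x}{|x|}+g$, applies Lemma~\ref{lem:lipschitz}(iv) once more in place of your $\Lip(H)$ estimate, and arrives at the stated constant on the nose.
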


\begin{proof}
By Lemma \ref{lem:lipschitz} we get $\Lip(f)\ge \Lip(f|_{\{|x|\le r\}})=1$ and the same argument is valid for $f^{-1}$ as well. So in what is follows we assume that $|x|\ge r$. Observe that $x\in\rr^{n+1}\setminus B(0,r)\mapsto r\frac{x}{|x|}$ is the metric projection onto the convex set $\{|x|\le r\}$ and so has Lipschitz constant 1, thus
\begin{equation}
\label{eq:lipx/|x|}
\Lip\big(\frac{x}{|x|}\big)\le 1/r.
\end{equation}
We denote by $\rho, \rho'$ the radial functions of $C$, $C'$ respectively. Let us estimate first the Lipschitz constant of the map
\[
x\in\rr^{n+1}\setminus B(0,r)\mapsto\frac{\rho'\big(\frac{x}{|x|}\big)-r}{\rho\big(\frac{x}{|x|}\big)-r}.
\]
By Lemma \ref{lem:lipschitz} (i), (iii),(vii), and \eqref{eq:lipx/|x|} we get
\begin{equation}
\label{eq:corestilipconst1}
\Lip\Big( \frac{\rho'\big(\frac{x}{|x|}\big)-r}{\rho\big(\frac{x}{|x|}\big)-r} \Big)
\le \frac{1}{r}\frac{R^2}{r}\frac{1}{r}+(R-r)\frac{R^2}{r}\frac{1}{r}\frac{1}{r}=\frac{R^2}{r^3}+(R-r)\frac{R^2}{r^4}.
\end{equation}
As the above function is bounded from above by $\frac{R-r}{r}$, and $x\mapsto\frac{x}{|x|}$ is bounded from above by 1, having Lipschitz constant no larger than $1/r$ by \eqref{eq:lipx/|x|},  Lemma~\ref{lem:lipschitz} (iv) then implies
\begin{equation}
\label{eq:corestilipconst2}
\Lip\Big( \frac{\rho'\big(\frac{x}{|x|}\big)-r}{\rho\big(\frac{x}{|x|}\big)-r} \Big)\frac{x}{|x|}
\le \frac{R^2}{r^3}+(R-r)\frac{R^2}{r^4}+\frac{R-r}{r}\frac{1}{r}.
\end{equation}
Thus, as the above function is bounded from above by $\frac{R-r}{r}$, and $ x\mapsto |x|-r$ is bounded from above by $R-r$, having Lipschitz constant no larger than $1$, then from Lemma~\ref{lem:lipschitz} (iv) we get

\begin{equation}
\label{eq:corestilipconst3}
\begin{split}
\Lip(f) &\le 1+ (R-r)\Big(\frac{R^2}{r^3}+(R-r)\frac{R^2}{r^4}+\frac{R-r}{r^2}\Big)+\frac{R-r}{r}
\\
&\le 1+ \Big(\frac{R-r}{r}\Big)\Big(\frac{R^2}{r^2}+\Big(\frac{R-r}{r}\Big)\frac{R^2}{r^2}+\frac{R-r}{r}+1 \Big)
\\
&\le 1+ \Big(\frac{R}{r}-1\Big) \Big(\frac{R^3}{r^3}+\frac{R}{r} \Big).
\end{split}
\end{equation}
\end{proof}

\section{The isoperimetric profile in the space of convex bodies}

Using the results of the previous Section, we shall prove in this one that, when a sequence of convex bodies converges in Hausdorff distance to a convex body, then the normalized isoperimetric profiles defined by \eqref{eq:jc} and \eqref{eq:def:y_C} converge uniformly to the normalized isoperimetric profiles of the limit convex body. This has some consequences: the isoperimetric profile $I_C$ of a convex body $C$, and its power $I_C^{(n+1)/n}$, even with non-smooth boundary, are concave. This would imply that isoperimetric regions and their complements are connected, and also the connectedness of the free boundaries when the boundary is of class $C^{2,\alpha}$.

\begin{theorem}
\label{thm: is cont}
Let $\{C_i\}_{i\in\nn}$ be a sequence of convex bodies in $\rr^{n+1}$ that converges to a convex body $C\subset\rr^{n+1}$ in Hausdorff distance. Then $J_{C_i}$ converges to $J_C$ pointwise in $[0,1]$. Consequently, also $y_{C_i}$ converges pointwise to $y_C$.
\end{theorem}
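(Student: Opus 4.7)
The aim is to establish pointwise convergence $J_{C_i}(\lambda)\to J_C(\lambda)$ on $(0,1)$; the endpoints are trivial since $J_{C_i}(0)=J_{C_i}(1)=0$, and the ``consequently'' assertion $y_{C_i}\to y_C$ then follows at once by continuity of $t\mapsto t^{(n+1)/n}$. By Theorem~\ref{thm:lipschitz} I fix bilipschitz maps $f_i:C\to C_i$ with $\Lip(f_i),\Lip(f_i^{-1})\to 1$, and note that Hausdorff convergence also gives $|C_i|\to|C|$, so writing $v:=\lambda|C|$ and $v_i:=\lambda|C_i|$ one has $v_i\to v$. The plan is to prove $\liminf I_{C_i}(v_i)\ge I_C(v)$ and $\limsup I_{C_i}(v_i)\le I_C(v)$ by a pullback and a pushforward argument respectively, both through~$f_i$.

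\emph{Lower bound.} Let $E_i\subset C_i$ be isoperimetric with $|E_i|=v_i$ (Lemma~\ref{lem:fundamental}(iii)) and set $F_i:=f_i^{-1}(E_i)\subset C$. Passing to a subsequence realising the $\liminf$ (which I may take finite, else there is nothing to prove), Lemma~\ref{lem:bilip} gives $|F_i|\to v$ and the uniform bound $P_C(F_i)\le\Lip(f_i^{-1})^n\,J_{C_i}(\lambda)$. Lemma~\ref{lem:fundamental}(ii) then yields a further subsequential $L^1(\intt(C))$ limit $F\subset C$ with $|F|=v$, and lower semicontinuity (Lemma~\ref{lem:fundamental}(i)) gives
\[
J_C(\lambda)=I_C(v)\le P_C(F)\le\liminf_{i\to\infty}P_C(F_i)\le\liminf_{i\to\infty}J_{C_i}(\lambda).
\]

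\emph{Upper bound.} Take $E\subset C$ isoperimetric with $|E|=v$ and set $F_i:=f_i(E)\subset C_i$. Lemma~\ref{lem:bilip} gives $|F_i|\to v$ and $P_{C_i}(F_i)\le\Lip(f_i)^n\,J_C(\lambda)$, but in general $|F_i|\neq v_i$; let $\delta_i:=v_i-|F_i|\to 0$. Because $0<|E|<|C|$ and Lemma~\ref{lem:n-7} forces $\partial E\cap\intt(C)$ to have Hausdorff dimension at most $n$, the sets $\intt(E)$ and $\intt(C)\setminus\overline{E}$ are non-empty and open; I therefore fix once and for all closed balls $\clb(p,2r_0)\subset\intt(C)\setminus\overline{E}$ and $\clb(p',2r_0')\subset\intt(E)$. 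Since $f_i$ sends $\intt(C)$ onto $\intt(C_i)$ (remark after Theorem~\ref{thm:lipschitz}) and $\Lip(f_i^{-1})\to 1$, for large $i$ the images of these balls contain genuine Euclidean balls $B(f_i(p),r_0)\subset\intt(C_i)\setminus F_i$ and $B(f_i(p'),r_0')\subset F_i\cap\intt(C_i)$. I then modify $F_i$: enlarge it by a Euclidean ball of volume $\delta_i$ inside $B(f_i(p),r_0)$ when $\delta_i>0$, or remove from it a Euclidean ball of volume $|\delta_i|$ inside $B(f_i(p'),r_0')$ when $\delta_i<0$. The resulting set $F_i'$ satisfies $|F_i'|=v_i$ and
\[
P_{C_i}(F_i')\le P_{C_i}(F_i)+c_n\,|\delta_i|^{n/(n+1)},
\]
where $c_n$ depends only on the dimension. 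Consequently $J_{C_i}(\lambda)\le P_{C_i}(F_i')\le\Lip(f_i)^n J_C(\lambda)+o(1)$, which yields the upper bound.

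The main obstacle is the volume correction in the upper bound: one must produce, uniformly in $i$, honest Euclidean balls of radius bounded away from zero lying inside $F_i$ and inside $\intt(C_i)\setminus F_i$, so that the perimeter cost of adjusting the volume is $O(|\delta_i|^{n/(n+1)})=o(1)$. This is handled by choosing the balls inside and outside $E$ in $C$ first and then transferring them via $f_i$, exploiting that the maps asymptotically preserve distances.
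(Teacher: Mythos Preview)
Your argument is correct. The lower bound is essentially the paper's: both pull back isoperimetric regions $E_i\subset C_i$ through the bilipschitz maps and compare with $I_C$; the only cosmetic difference is that the paper invokes the continuity of $I_C$ (Lemma~\ref{lem:fundamental}(iv)) to pass from $I_C(|f_i^{-1}(E_i)|)$ to $I_C(v)$, whereas you extract an $L^1$-convergent subsequence and use lower semicontinuity, which is arguably cleaner since it does not appeal to a property of $I_C$ that the theorem is later used to reprove via concavity.

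The upper bound is where the two proofs genuinely diverge. The paper argues by contradiction: it takes an isoperimetric $E\subset C$, picks a regular point of $\partial E\cap\intt(C)$ (Lemma~\ref{lem:n-7}), and builds a one-parameter deformation $\{E_t\}$ with strictly monotone volume and Lipschitz control of perimeter in terms of volume (first variation); pushing $\{E_t\}$ forward by $f_i^{-1}$ and applying the intermediate value theorem produces a competitor of the exact volume $\lambda|C_i|$ in $C_i$. Your route is more elementary: push $E$ forward once, then correct the $o(1)$ volume defect by inserting or deleting a small Euclidean ball, at perimeter cost $O(|\delta_i|^{n/(n+1)})$. Your method avoids the first-variation machinery entirely and needs regularity only to guarantee that $\intt(E)$ and $\intt(C)\setminus\overline{E}$ are nonempty open sets; the paper's method, on the other hand, yields the slightly sharper perimeter control $|P_C(E_t)-P_C(E)|\le M\,|\,|E_t|-|E|\,|$ (linear rather than H\"older), though that extra precision is not needed here.
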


\begin{proof}
For $\la\in\{0,1\}$ we have $J_{C_i}(\la)=J_C(\la)=0$. Let us fix some $\la\in (0,1)$. Let $\{E_i\}_{i\in\nn}$ be a sequence of isoperimetric regions in $C_i$ with $\vol{E_i}=\la\,\vol{C_i}$, see Lemma~\ref{lem:fundamental}. By the regularity lemma~\ref{lem:n-7}, $\pp_C(E_i)=\hh^n(\ptl E_i\cap\intt(C_i))$. By the continuity of the volume with respect to the Hausdorff distance, we have $\lim_{i\to\infty}\vol{E_i}=\la\,\vol{C}$.

Theorem ~\ref{thm:lipschitz} implies the existence of a sequence of bilipschitz maps $f_i: C_i\to C$ so that $\lim_{i\to\infty}\Lip(f_i)=\lim_{i\to\infty}\Lip(f_i)^{-1}=1$. Lemma~\ref{lem:bilip} yields
\begin{align*}
\frac{1}{\Lip(f_i^{-1}) ^{n+1}}\,\vol{E_i} &\le  \vol{f_i(E_i)}\le \Lip(f_i)^{n+1} \,\vol{E_i},
\\
\frac{1}{\Lip(f_i^{-1}) ^{n}}\,\pp_{C_i}(E_i) &\le \pp_C (f_i(E_i))\le \Lip(f_i)^n \,\pp_{C_i}(E_i).
\end{align*}
So $\{f_i(E_i)\}_{i\in\nn}$ is a sequence of finite perimeter sets in $C$ with $\lim_{i\to\infty}\vol{f_i(E_i)}=\la\,\vol{C}$, and $\liminf_{i\to\infty} P_{C_i}(E_i)=\liminf_{i\to\infty} P_C(f_i(E_i))$. From Lemma \ref{lem:fundamental} 
we have
\begin{align*}
J_C(\la)&\le\lim_{i\to\infty} I_C(\vol{f_i(E_i)})\le\liminf_{i\to\infty} P_C(f_i(E_i))
\\
&=\liminf_{i\to\infty} P_{C_i}(E_i)=\liminf_{i\to\infty} J_{C_i}(\la).
\end{align*}

Let us prove now that $J_C(\la)\ge \limsup_{i\to\infty} J_{C_i}(\la)$. We shall reason by contradiction assuming that $J_C(\la)<\limsup J_{C_i}(\la)$. Passing to a subsequence we can suppose that $\{J_{C_i}(\la)\}_{i\in\nn}$ converges. So let us assume $J_C(\la)<\lim_{i\to\infty} J_{C_i}(\la)$. Let $E\subset C$ be an isoperimetric region with $\vol{E}=\la\,\vol{C}$. Consider a point $p$ in the regular part of $\ptl E\cap\intt(C)$. We take a vector field in $\rr^{n+1}$ with compact support in a small neighborhood of $p$ that does not intersect the singular set of $\ptl E$. We choose the vector field so that the deformation $\{E_t\}_{t\in\rr}$ induced by the associated flow strictly increases the volume in the interval $(-\eps,\eps)$, i.e., $t\mapsto\vol{E_t}$ is strictly increasing in $(-\eps,\eps)$. Taking a smaller $\eps$ if necessary, the first variation formulas of volume and perimeter imply the existence of a constant $M>0$ so that
\begin{equation}
\label{eq:almgren}
|\hh^n(\ptl E_t\cap\intt(C))-\hh^n(\ptl E\cap\intt(C)|\le M\,|\vol{E_t}-\vol{E}|
\end{equation}
holds for all $t\in (-\eps,\eps)$. Reducing $\eps$ again if necessary we may assume
\begin{equation}
\label{eq:almcons}
\hh^n(\ptl E\cap\intt(C))+M\,|\vol{E_t}-\vol{E}|<\lim_{i\to\infty}J_{C_i}(\la).
\end{equation}
(recall we are supposing $\hh^n(\ptl E\cap\intt(C))=J_C(\la)<\lim_{i\to\infty} J_{C_i}(\la))$.

For every $i\in\nn$, consider the sets $\{f_i^{-1}(E_t)\}_{t\in (-\eps,\eps)}$. Since
\[
\frac{1}{\Lip(f_i) ^{n+1}}\,\vol{E_t} \le \vol{f_i^{-1}(E_t)}\le \Lip(f_i^{-1})^{n+1}\,\vol{E_i},
\]
$\vol{E_{-\eps/2}}<\la\,\vol{C}$, $\vol{E_{\eps/2}}>\la\,\vol{C}$ by the monotonicity of the function $t\mapsto\vol{E_t}$ in $(-\tfrac{\eps}{2},\tfrac{\eps}{2})$, the Lipschitz constants $\Lip(f_i), \Lip(f_i^{-1})$ converge to $1$ when $i\to\infty$, and $\lim_{i\to\infty} \vol{C_i}/\vol{C}=1$, there exists $i_0\in\nn$ such that
\[
\vol{f_i^{-1}(E_{\eps/2})}>\la\,\vol{C_i},\qquad  \vol{f_i^{-1}(E_{-\eps/2})}<\la\,\vol{C_i},
\]
for all $i\ge i_0$. Since $t\mapsto \vol{f_i^{-1}(E_t)}$ is continuous, for every $i\ge i_0$, there exists $t(i)\in (-\tfrac{\eps}{2},\tfrac{\eps}{2})$ so that $\vol{f_i^{-1}(E_{t(i)})}=\la\,\vol{C_i}$, and we have
\begin{align*}
\pp_{C_i}(f_i^{-1}(E_{t(i)}))&\le \Lip(f_i^{-1})\,\pp_C(E_{t(i)})
\\
&\le \Lip(f_i^{-1})\,\big(P_C(E)+M\,|\vol{E_t}-\vol{E}|\big)
\\
&< J_{C_i}(\la),
\end{align*}
for $i$ large enough, using \eqref{eq:almcons} and $\Lip(f_i^{-1})\to 1$. This contradiction shows
\[
J_C(\la)\ge \limsup_{i\to\infty} J_{C_i}(\la),
\]
and hence $J_C(\la)=\lim_{i\to\infty} J_{C_i}(\la)$.
\end{proof}

Theorem~\ref{thm: is cont} allows us to extend properties of the isoperimetric profile for convex bodies with smooth boundary to arbitrary convex bodies. The following result was first proven by E.~Milman

\begin{corollary}[{\cite[Corollary~6.11]{MR2507637}}]
\label{cor:concavidad de perfil}
Let $C\subset\rr^{n+1}$ be a convex body. Then $y_C$ is a concave function. As a consequence, the functions $Y_C$,  $I_C$ and $J_C$ are concave.
\end{corollary}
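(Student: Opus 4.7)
The plan is to deduce the concavity of $y_C$ by a Hausdorff approximation argument, using the known concavity of the $(n+1)/n$-power of the isoperimetric profile in the smooth case (Kuwert \cite{MR2008339}) together with Theorem~\ref{thm: is cont}.

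First, I would invoke the standard fact (see Schneider \cite[\S~3.4]{sch}) that every convex body $C\subset\rr^{n+1}$ can be approximated in Hausdorff distance by a sequence $\{C_i\}_{i\in\nn}$ of convex bodies with $C^{\infty}$ (or at least $C^{2,\alpha}$) boundary and positive principal curvatures. For each such $C_i$, Kuwert's theorem gives that $Y_{C_i}=I_{C_i}^{(n+1)/n}$ is concave on $(0,\vol{C_i})$. Since $y_{C_i}(\la)=Y_{C_i}(\la\vol{C_i})$ is an affine reparametrization of $Y_{C_i}$, it follows that $y_{C_i}:[0,1]\to\rr^+$ is concave.

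Next, by Theorem~\ref{thm: is cont}, $y_{C_i}\to y_C$ pointwise on $[0,1]$. A pointwise limit of concave functions on an interval is concave, so $y_C$ is concave. To transfer concavity back to $J_C$, I use that $J_C=y_C^{n/(n+1)}$ and the function $t\mapsto t^{n/(n+1)}$ is concave and non-decreasing on $[0,\infty)$; the composition of a concave non-decreasing function with a concave function is concave, so $J_C$ is concave on $[0,1]$. Since $I_C(v)=J_C(v/\vol{C})$ and $Y_C(v)=y_C(v/\vol{C})$ are obtained from $J_C$ and $y_C$ by affine reparametrization, concavity of $I_C$ and of $Y_C$ follows immediately.

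The only delicate point is the smooth approximation step, and it is classical: one can, for instance, take $C_i=(C+\tfrac{1}{i}\clb(0,1))\ast\varphi_i$ (an inner parallel body followed by mollification of the support function) or use directly Schneider's approximation theorem, obtaining convex bodies with $C^\infty$ boundary and strictly positive principal curvatures that converge to $C$ in Hausdorff distance. Everything else is a formal consequence of Theorem~\ref{thm: is cont} and elementary properties of concave functions.
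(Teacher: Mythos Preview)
Your proposal is correct and follows essentially the same approach as the paper's proof: approximate $C$ in Hausdorff distance by smooth convex bodies, invoke Kuwert's concavity result for $Y_{C_i}$ (hence $y_{C_i}$), pass to the pointwise limit via Theorem~\ref{thm: is cont}, and then deduce concavity of $Y_C$, $I_C$, $J_C$ by affine reparametrization and composition with the increasing concave map $t\mapsto t^{n/(n+1)}$. The only cosmetic difference is the order in which the auxiliary profiles are handled; the paper first passes from $y_C$ to $Y_C$ and then to $I_C$, $J_C$, while you go through $J_C$ first, but this is immaterial.
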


\begin{proof}
Let $\{C_i\}_{i\in\nn}$ be a sequence of convex bodies with smooth boundaries that converges to $C$ in Hausdorff distance. The functions  $y_{C_i}$ are concave by the results of Kuwert \cite{MR2008339}, see also \cite[Remark 3.3]{bay-rosal}. By Theorem~\ref{thm: is cont}, $y_{C_i}\to y_C$ pointwise in $[0,1]$ and so $y_C$ is concave. Since $Y_C$ is the composition of $y_C$ with an affine function, we conclude that $Y_C$ is also concave. As the composition of a concave function with an increasing  concave function is concave, it follows that $I_C=Y_C^{n/(n+1)}$, $J_C=y_C^{n/(n+1)}$ are concave as well.
\end{proof}

\begin{remark}
The concavity of the isoperimetric profile of an Euclidean convex body with $C^{2,\alpha}$ boundary was proven by Sternberg and Zumbrum \cite{MR1674097}, see also \cite{bay-rosal}. Kuwert later extended this result by showing the concavity of $I_C^{(n+1)/n}$ for convex sets with $C^2$ boundary.
\end{remark}

\begin{corollary}
\label{cor:J_{C_i}toJ_C_unif}
Let $\{C_i\}_{i\in\nn}$ be a sequence of convex bodies in $\rr^{n+1}$ that converges to a convex body $C\subset \rr^{n+1}$ in the Hausdorff topology. Then $J_{C_i}$ $($resp.~$y_{C_i})$ converges to $J_C$ $($resp.~$y_C)$~uniformly on compact subsets of $(0,1)$.
\end{corollary}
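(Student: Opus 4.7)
The plan is to combine the pointwise convergence established in Theorem~\ref{thm: is cont} with the concavity from Corollary~\ref{cor:concavidad de perfil}, and invoke the classical principle that a pointwise convergent sequence of concave functions on an open interval converges uniformly on every compact subinterval. The concavity supplies an equi-Lipschitz bound on the family away from the endpoints, and equi-Lipschitz continuity plus pointwise convergence always upgrades to uniform convergence.

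Concretely, I would fix a compact set $K\subset(0,1)$ and enclose it in a closed subinterval $[a,b]\subset(0,1)$; then choose auxiliary numbers $a',b'$ with $0<a'<a<b<b'<1$. By Theorem~\ref{thm: is cont} applied at the two points $a'$ and $b'$, the numerical sequences $\{J_{C_i}(a')\}$ and $\{J_{C_i}(b')\}$ converge and hence are bounded; together with $J_{C_i}\ge 0$, this and the concavity of each $J_{C_i}$ on $[a',b']$ (Corollary~\ref{cor:concavidad de perfil}) force $\{J_{C_i}\}$ to be uniformly bounded on the whole interval $[a',b']$.

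The next step is to use the standard three-slope inequality for concave functions: for each $i$, any chord of the graph of $J_{C_i}$ over $[a,b]$ has slope lying between the slopes of the left-most and right-most chords attached to $a'$ and $b'$, respectively. Since those extreme slopes are uniformly bounded in $i$ by the previous paragraph, the family $\{J_{C_i}\}$ is equi-Lipschitz on $[a,b]$. A routine $\eps/3$-argument on a finite $\eps$-net of $[a,b]$, using this uniform Lipschitz constant together with the pointwise convergence $J_{C_i}(x)\to J_C(x)$ at the net points, then promotes pointwise to uniform convergence on $[a,b]$, and therefore on $K$. Applying exactly the same reasoning to the concave functions $y_{C_i}$ (concave by Corollary~\ref{cor:concavidad de perfil}, pointwise convergent to $y_C$ by Theorem~\ref{thm: is cont}) yields the statement for $y_{C_i}\to y_C$.

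The only slightly non-routine ingredient is the equi-Lipschitz estimate on $[a,b]$, but this is a textbook property of concave functions whose values are controlled on a strictly larger interval; the rest of the argument is mechanical, and no further geometric input is needed beyond the two already quoted results.
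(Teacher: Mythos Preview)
Your approach is essentially the paper's: the paper simply invokes \cite[Thm.~10.8]{roc} (pointwise convergence of convex functions on an open convex set is uniform on compacta), which is exactly the classical fact you spell out by hand via equi-Lipschitz bounds. One small imprecision: concavity \emph{on $[a',b']$} together with $J_{C_i}\ge 0$ and bounded endpoint values does \emph{not} force uniform boundedness on $[a',b']$ (think of $f_k(t)=k\,t(1-t)$ on $[0,1]$); you need concavity on a strictly larger interval---which you of course have, since Corollary~\ref{cor:concavidad de perfil} gives concavity on all of $(0,1)$ and $J_{C_i}(0)=0$---so the fix is immediate.
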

\begin{proof}
By Theorem~\ref{thm: is cont} we have that $J_{C_i}\to J_C$ pointwise. By \cite[Thm.~10.8]{roc}, this convergence is uniform on compact sets of $(0,1)$.
\end{proof}

\begin{corollary}
\label{cor:I_CivitoI Cv}
Let $\{C_i\}_{i\in\nn}$ be a sequence of convex bodies in $\rr^{n+1}$ that converges to a convex body $C$ in the Hausdorff topology. Let $v_i\in [0,\vol{C_i}]$, $v\in [0,\vol{C}]$ so that $v_i\to v$. Then $I_{C_i}(v_i)\to I_C(v)$.
\end{corollary}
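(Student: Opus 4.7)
The plan is to reduce the claim to Corollary~\ref{cor:J_{C_i}toJ_C_unif} by renormalizing. Since Hausdorff convergence $C_i\to C$ forces $\vol{C_i}\to\vol{C}$, setting $\la_i:=v_i/\vol{C_i}$ and $\la:=v/\vol{C}$ we obtain $\la_i\to\la$ in $[0,1]$ together with the identities $I_{C_i}(v_i)=J_{C_i}(\la_i)$ and $I_C(v)=J_C(\la)$. Hence it suffices to prove $J_{C_i}(\la_i)\to J_C(\la)$.

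For $\la\in(0,1)$ this is straightforward: fix any compact interval $[a,b]\subset(0,1)$ with $\la\in(a,b)$, so that $\la_i\in[a,b]$ for $i$ large. Corollary~\ref{cor:J_{C_i}toJ_C_unif} provides $J_{C_i}\to J_C$ uniformly on $[a,b]$, and since $J_C$ is concave by Corollary~\ref{cor:concavidad de perfil} and therefore continuous at $\la$, the triangle inequality
\[
|J_{C_i}(\la_i)-J_C(\la)|\le \sup_{[a,b]}|J_{C_i}-J_C|+|J_C(\la_i)-J_C(\la)|
\]
yields the desired convergence as $i\to\infty$.

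The main obstacle lies in the endpoint cases $\la\in\{0,1\}$, which are not handled by the uniform convergence on compact subsets of $(0,1)$. For $\la=0$ one has $J_C(0)=0$, so it suffices to bound $J_{C_i}(\la_i)$ from above. Because $\intt(C)\neq\emptyset$ and $C_i\to C$ in Hausdorff distance, Lemma~1.8.14 in \cite{sch} ensures the existence of a closed ball $\clb(x_0,r_0)$ contained in $C_i$ for all $i$ sufficiently large. Once $v_i<\vol{\clb(x_0,r_0)}$, a Euclidean ball of volume $v_i$ placed inside $\clb(x_0,r_0)$ is a legitimate competitor for $I_{C_i}(v_i)$, yielding $I_{C_i}(v_i)\le c_n\, v_i^{n/(n+1)}\to 0$, where $c_n$ denotes the standard Euclidean isoperimetric constant. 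The case $\la=1$ is handled symmetrically: with $\tilde v_i:=\vol{C_i}-v_i\to 0$, one takes as test set the complement in $C_i$ of a Euclidean ball of volume $\tilde v_i$ placed inside $\clb(x_0,r_0)$; this set has volume $v_i$ and $C_i$-relative perimeter $c_n\,\tilde v_i^{n/(n+1)}\to 0$, whence $I_{C_i}(v_i)\to 0=I_C(\vol{C})$.
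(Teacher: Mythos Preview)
Your proof is correct and follows essentially the same approach as the paper's own argument: reduce to the normalized profiles, use the uniform convergence of $J_{C_i}\to J_C$ on compact subsets of $(0,1)$ together with continuity of $J_C$ for the interior case, and handle the endpoints $\la\in\{0,1\}$ by comparison with small Euclidean balls (or their complements) placed inside the $C_i$. Your version is slightly more explicit in securing a common ball $\clb(x_0,r_0)\subset C_i$ for all large $i$ and in invoking concavity for the continuity of $J_C$, but the structure and key ideas coincide with the paper's proof.
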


\begin{proof}
First we consider the case $v=0$. For $i$ sufficiently large, consider Euclidean geodesic balls $B_i \subset \intt(C_i)$ of volume $v_i$. Letting $v_i\to 0$ and taking into account that $I_C(0)=0$, we are done. The case $v=\vol{C}$ is handled taking the complements $C\setminus B_i$ of the balls.

Now assume that $0<v<\vol{C}$. Let $w_i=v_i/\vol{C_i}$ and $w=v/\vol{C}$. Then by the continuity of the volume with respect to the Hausdorff distance \cite[Thm.~1.8.16]{sch} we get $w_i\to w$. Take $\eps>0$ such that $[w-\eps,w+\eps]\subset (0,1)$. For large $i$ we have
\begin{align*}
|J_{C_i}(w_i)-J_C(w)|&\le |J_{C_i}(w_i)-J_{C}(w_i)|+|J_C(w_i)-J_C(w)|
\\
&\le \sup_{x\in[w-\eps,w+\eps]}|J_{C_i}(x)-J_C(x)|+|J_C(w_i)-J_C(w)|.
\end{align*}
By Corollary \ref{cor:J_{C_i}toJ_C_unif}, $J_{C_i}$ converges to $J_C$ uniformly on $[w-\eps,w+\eps]$ and, as $J_C$ is continuous \cite{gallot}, we get  $J_{C_i}(w_i)\to J_C(w)$. From the definition of $J$, $w_i$, and $w$ the proof follows.
\end{proof}

\begin{theorem}
\label{thm:connectedness}
Let $C\subset \rr^{n+1}$  be a convex body, and $E\subset C$ an isoperimetric region. Then $E$ and $C\setminus E$ are connected.
\end{theorem}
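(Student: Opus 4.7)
The plan is to reduce the connectedness of $E$ to a strict subadditivity property of $I_C$. By Corollary~\ref{cor:concavidad de perfil}, the function $I_C^{(n+1)/n}$ is concave, and by Bayle's observation quoted in the Introduction this forces $I_C$ itself to be strictly concave on $(0,\vol{C})$. Since $I_C(0)=0$, writing $a=\tfrac{a}{a+b}(a+b)+\tfrac{b}{a+b}\cdot 0$ (and symmetrically for $b$) and applying strict concavity to each gives, for $a,b>0$ with $a+b\le\vol{C}$,
\[
I_C(a)+I_C(b) > I_C(a+b).
\]

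I would then argue by contradiction: suppose $E$ is not connected. Using Lemma~\ref{lem:n-7}, the relatively open set $E(1)\cap\intt(C)$ has smooth boundary in $\intt(C)$ outside a closed singular set of Hausdorff dimension at most $n-7$, so its topological connected components have pairwise disjoint closures inside $\intt(C)$. Pick one such component $E_1$ of positive volume and set $E_2=E\setminus E_1$, obtaining disjoint sets of finite perimeter of positive volume whose reduced boundaries in $\intt(C)$ are disjoint. This yields additivity of the relative perimeter, $\ppc(E)=\ppc(E_1)+\ppc(E_2)$. With $v_i=\vol{E_i}$ and $v=v_1+v_2$, combining $\ppc(E_i)\ge I_C(v_i)$ with the strict subadditivity above produces
\[
I_C(v)=\ppc(E)=\ppc(E_1)+\ppc(E_2)\ge I_C(v_1)+I_C(v_2) > I_C(v),
\]
a contradiction. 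For the complement, the map $F\mapsto C\setminus F$ preserves relative perimeter (the reduced boundaries of $F$ and $C\setminus F$ inside $\intt(C)$ coincide) and exchanges the volumes $v$ and $\vol{C}-v$, so $C\setminus E$ is also an isoperimetric region, and the same argument applied to it gives its connectedness.

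The easy part is the strict concavity bookkeeping; the main obstacle is justifying the decomposition rigorously, i.e.\ that a disconnected $E$ admits a splitting with additive relative perimeter. Here Lemma~\ref{lem:n-7} does the work: replacing $E$ by the open representative $E(1)\cap\intt(C)$, the smallness of the singular set together with the smoothness of $\ptl E\cap\intt(C)$ off that set is what guarantees that distinct topological components of the regular part have disjoint closures in $\intt(C)$, so that $\ptl^*E\cap\intt(C)$ splits as a disjoint union of the reduced boundaries of the pieces.
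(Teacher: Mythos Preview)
Your proof is correct and follows the same overall strategy as the paper: establish strict subadditivity $I_C(v_1)+I_C(v_2)>I_C(v_1+v_2)$, then derive a contradiction from a putative decomposition $E=E_1\cup E_2$ with additive perimeter, and finish by observing that $C\setminus E$ is itself isoperimetric. The difference lies only in how the strict subadditivity is obtained. The paper works directly with the concavity of $Y_C=I_C^{(n+1)/n}$: from $Y_C(0)=0$ one gets $Y_C(v_1+v_2)\le Y_C(v_1)+Y_C(v_2)$, and then the elementary strict inequality $(a+b)^{n/(n+1)}<a^{n/(n+1)}+b^{n/(n+1)}$ for $a,b>0$ yields $I_C(v_1+v_2)<I_C(v_1)+I_C(v_2)$. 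Your route---invoking Bayle's observation that concavity of $I_C^{(n+1)/n}$ forces strict concavity of $I_C$, and then using $I_C(0)=0$---is equally valid, though note that applying strict concavity with one node at the endpoint $0$ tacitly uses that a strictly concave function vanishing at $0$ has $I_C(v)/v$ strictly decreasing; the paper's argument sidesteps this by never needing strict concavity of $I_C$ per se. On the decomposition side, you are more careful than the paper, which simply asserts $P_C(E)=P_C(E_1)+P_C(E_2)$ without comment; your use of Lemma~\ref{lem:n-7} to pass to an open representative with well-separated components is a reasonable way to make this rigorous.
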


\begin{proof}
We shall prove that the function $I_C$ satisfies
\begin{equation}
\label{eq:I_C-strict}
I_C(v_1+v_2)<I_C(v_1)+I_C(v_2),
\end{equation}
whenever $v_1$, $v_2>0$. To prove \eqref{eq:I_C-strict} we shall use the concavity of  $Y_C$ showed in Corollary~\ref{cor:concavidad de perfil} and the fact that $Y_C(0)=0$ to obtain
\[
\frac{Y_C(v_1+v_2)}{v_1+v_2}\le\min\bigg\{\frac{Y_C(v_1)}{v_1},\frac{Y_C(v_2)}{v_2}\bigg\},
\]
what implies
\[
Y_C(v_1+v_2)\le Y_C(v_1)+Y_C(v_2),
\]
as in \cite[Lemma~B.1.4]{bayle}. Raising to the power $n/(n+1)$ we get
\[
I_C(v_1+v_2)\le (I_C(v_1)^{(n+1)/n}+I_C(v_2)^{(n+1)/n})^{n/(n+1)}<I_C(v_1)+I_C(v_1),
\]
where the last inequality follows from $(a+b)^q<a^q+b^q$, for $a$, $b>0$, $q\in (0,1)$, cf. \cite[(2.12.2)]{hlp}. This proves \eqref{eq:I_C-strict}.

If $E\subset C$ were a disconnected isoperimetric region, then $E=E_1\cup E_2$, with $\vol{E}=\vol{E_1}+\vol{E_2}$, and $P_C(E)=P_C(E_1)+P_C(E_2)$, and we should have
\[
I_C(v)=P_C(E)=P_C(E_1)+P_C(E_2)\ge I_C(v_1)+I_C(v_2),
\]
which is a contradiction to \eqref{eq:I_C-strict}. If $E\subset C$ is an isoperimetric region, then $C\setminus E$ is an isoperimetric region and so connected as well.
\end{proof}

In case the boundary of $C$ is of class $C^{2,\alpha}$, Sternberg and Zumbrun \cite{MR1674097} obtained a expression for the second derivative of the perimeter with respect to the volume in formula (2.31) inside Theorem~2.5 of \cite{MR1674097}. Using this formula they obtained in their Theorem~2.6 that a local minimizer $E$ of perimeter (in a $L^1$ sense) has the property that the closure of $\ptl E\cap\intt(C)$ is either connected or it consists of a union of parallel planar (totally geodesic) components meeting $\ptl C$ orthogonally with that part of $C$ lying between any two such totally geodesic components consisting of a cylinder. If $E$ is an isoperimetric region so that the closure of $\ptl E\cap\intt(C)$ consists on more than one totally geodesic component, then Theorem~2.6 in \cite{MR1674097} implies that either $E$ or its complement in $C$ is disconnected, a contradiction to Theorem~\ref{thm:connectedness}. So we have proven

\begin{theorem}
\label{thm:sz-improved}
Let $C$ be a convex body with $C^{2,\alpha}$ boundary, and $E\subset C$ an isoperimetric region. Then the closure of $\ptl E\cap\intt(C)$ is connected.
\end{theorem}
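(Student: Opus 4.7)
The plan is to invoke directly Theorem~2.6 of Sternberg and Zumbrun \cite{MR1674097}, which applies because $\ptl C$ is of class $C^{2,\alpha}$ and an isoperimetric region is in particular a local minimizer of perimeter in the $L^1$ sense. That theorem provides the following dichotomy for any such local minimizer $E$: either $\cl{\ptl E\cap\intt(C)}$ is connected, or it splits as a finite disjoint union of parallel totally geodesic (i.e., planar) components that meet $\ptl C$ orthogonally, and the portion of $C$ trapped between consecutive such components is a right cylinder over their common base.

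Assume for contradiction that we are in the second alternative. The key observation is then geometric: two parallel hyperplanes inside $C$ that meet $\ptl C$ orthogonally and bound a cylindrical slab of $C$ cut $C$ into three pieces, and any set $E$ whose boundary in $\intt(C)$ consists of at least two such parallel slices must have either $E$ or $C\setminus E$ equal to a union of at least two of these pieces separated by a full slab of the complement. In particular, one of $E$ or $C\setminus E$ fails to be connected.

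But Theorem~\ref{thm:connectedness} asserts that both $E$ and $C\setminus E$ are connected for any isoperimetric region in an arbitrary convex body, and in particular in $C$. This contradicts the second alternative of Sternberg--Zumbrun, so the first alternative must hold and $\cl{\ptl E\cap\intt(C)}$ is connected.

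The proof is thus genuinely a short deduction: the whole structural work sits inside Sternberg--Zumbrun's Theorem~2.6, and the only new ingredient needed to upgrade ``either/or'' into ``connected'' is the strict concavity of $I_C$ obtained via Corollary~\ref{cor:concavidad de perfil}, which was already used to prove Theorem~\ref{thm:connectedness}. The only mildly subtle point is the geometric assertion that parallel totally geodesic slices orthogonal to $\ptl C$ force disconnectedness of $E$ or its complement; this is immediate from the cylindrical structure described by Sternberg--Zumbrun, since the slab between two such slices is entirely contained either in $E$ or in $C\setminus E$ (its boundary in $\intt(C)$ contains no component of $\ptl E$), which together with the presence of pieces of $E$ on both sides produces the required disconnection.
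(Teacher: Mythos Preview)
Your proposal is correct and follows essentially the same approach as the paper: invoke the Sternberg--Zumbrun dichotomy (their Theorem~2.6), observe that the multi-component alternative forces $E$ or $C\setminus E$ to be disconnected by the cylindrical structure between consecutive slices, and reach a contradiction with Theorem~\ref{thm:connectedness}. Your write-up is in fact slightly more detailed than the paper's in spelling out why the parallel-slice configuration yields disconnectedness.
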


From the concavity of $I_C$ the following properties of the isoperimetric profile of $I_C$ follow. Similar properties can be found in \cite{MR875084}, \cite{MR1161609},  \cite{MR1857855}, \cite{MR2051615} and \cite{MR1803220}.

\begin{proposition}
\label{prp:anal prop isop prof}
Let $C\subset\rr^{n+1}$ be a convex body. Then
\begin{enum}
\item $I_C$ can be extended continuously to $[0,\vol{C}]$ so that $I_C(0)=I_C(\vol{C})=0$.
\item $I_C:[0,\vol{C}]\to\rr^+$ is a positive concave function, symmetric with respect to $\vol{C}/2$, increasing up to $\vol{C}/2$ and decreasing from $\vol{C}/2$. Left and right derivatives $(I_C)'_-(v)$, $(I_C)'_+(v)$, exist for every $v\in (0,\vol{C})$. Moreover, $I_C$ is differentiable $\hh^1$-almost everywhere and we have
\[
I_C(v)=\int_0^v (I_C)'_-(w)\,dw=\int_0^v (I_C)'_+(w)\,dw=\int_0^v I_C'(w)\,dw,
\]
for every $v\in [0,\vol{C}]$.
\item If $E\subset C$ is an isoperimetric region of volume $v\in (0,\vol{C})$, and $H$ is the $($constant$)$ mean curvature of the regular part of $\ptl E\cap\intt(C)$, then
\[
(I_C)'_+(v)\le H\le (I_C)'_-(v).
\]
In particular, if $I_C$ is differentiable at $v$, then the mean curvature of every isoperimetric region of volume $v$ equals $I_C'(v)$.
\end{enum}
\end{proposition}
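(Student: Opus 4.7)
For (i), I first observe the symmetry $I_C(v)=I_C(\vol{C}-v)$, which is immediate from $\ppc(E)=\ppc(C\setminus E)$ together with $\vol{C\setminus E}=\vol{C}-\vol{E}$. Continuity on the open interval $(0,\vol{C})$ is already given by the concavity established in Corollary~\ref{cor:concavidad de perfil}. At the endpoint $v=0$, inserting small Euclidean balls $B(x,r)\subset\intt(C)$ as competitors yields an estimate of the form $I_C(v)\le c\,v^{n/(n+1)}$ for small $v$, so $I_C(v)\to 0$; the symmetry then handles $v=\vol{C}$. Hence $I_C$ admits a continuous extension with $I_C(0)=I_C(\vol{C})=0$.

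For the rest of (ii), positivity on $(0,\vol{C})$ is clear: if $\ppc(E)=0$ then $E\cap\intt(C)$ would have zero relative perimeter in the connected open set $\intt(C)$, forcing (since $\vol{\ptl C}=0$) that $\vol{E}\in\{0,\vol{C}\}$. Combining concavity, symmetry, positivity, and the vanishing at the endpoints, one deduces that $I_C$ is nondecreasing on $[0,\vol{C}/2]$ and nonincreasing on $[\vol{C}/2,\vol{C}]$. The existence of one-sided derivatives at every interior point, differentiability $\hh^1$-almost everywhere, and the three equivalent integral representations (for $(I_C)'_-$, $(I_C)'_+$ and $I_C'$) are classical properties of concave functions on a bounded interval, which are locally Lipschitz on the interior and hence absolutely continuous; I would cite \cite[Thms.~24.1--24.2, Cor.~24.2.1]{roc}.

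For (iii), the argument is by first variation. Fix a regular point $p\in S$ and choose a smooth vector field $X$ with compact support in a small ball around $p$, disjoint both from $\ptl C$ and from the singular set $S_0$ of $\ptl E\cap\intt(C)$, with $\int_S\langle X,\nu\rangle\,d\hh^n\neq 0$, where $\nu$ is the outer unit normal along $S$ (such an $X$ exists by Lemma~\ref{lem:n-7}(i)). Let $\{E_t\}_t$ be the family generated by the flow of $X$, and set $v(t):=\vol{E_t}$. The first variation formulas for volume and perimeter give
\[
v'(0)=\int_S\langle X,\nu\rangle\,d\hh^n,\qquad \frac{d}{dt}\ppc(E_t)\bigg|_{t=0}=H\,\int_S\langle X,\nu\rangle\,d\hh^n,
\]
where the second identity uses the constancy of $H$ on $S$ and the fact that $\spt(X)$ does not meet $\ptl C$. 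Thus $v$ is $C^1$ near $0$ with $v'(0)\neq 0$ (of either sign, by replacing $X$ by $-X$), and
\[
\ppc(E_t)-\ppc(E)=H\,\bigl(v(t)-v\bigr)+o\bigl(v(t)-v\bigr).
\]
Comparing with $\ppc(E_t)\ge I_C(v(t))$ and $\ppc(E)=I_C(v)$, dividing by $v(t)-v$ and taking $t\to 0^+$ with $v(t)>v$ yields $H\ge (I_C)'_+(v)$; taking $t\to 0^-$ with $v(t)<v$ (which reverses the sign of the denominator) yields $H\le (I_C)'_-(v)$. The last assertion is an immediate consequence.

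The only technical point is ensuring in (iii) that the deformation is admissible, i.e.\ compactly supported in $\intt(C)$ and away from $S_0$, so that the classical smooth first variation formulas apply verbatim; this is guaranteed by Lemma~\ref{lem:n-7}(i), so no real obstacle remains.
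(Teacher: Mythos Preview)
Your proposal is correct and follows essentially the same approach as the paper: concavity from Corollary~\ref{cor:concavidad de perfil} plus standard facts about concave functions (Rockafellar) for (i)--(ii), and a first-variation argument through a smooth regular part of $\ptl E\cap\intt(C)$ (via Lemma~\ref{lem:n-7}) compared against $I_C$ for (iii). Your write-up is in fact more explicit than the paper's on the endpoint continuity and positivity, but the ideas are identical.
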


\begin{proof}
By Theorem~\ref{thm: is cont} we have that $I_C$ is a symmetric, positive, concave function, increasing up to the midpoint and then decreasing. By \cite[Thm.~24.1]{roc}, side derivatives exist for all volumes. By \cite[Thm.~25.3]{roc} differentiability almost everywhere, and absolute continuity \cite[Cor.~24.2.1]{roc} hold, from where the proof of (i) follows.

To prove (ii), take an isoperimetric region $E\subset C$ of volume $v$ and constant mean curvature $H$. By the regularity lemma~\ref{lem:n-7} we can find an open  subset $U$ contained in the regular part of $\ptl E$. Take a nontrivial $C^1$ function $u\ge 0$ with compact support in $U$ that produces an inward normal variation $\{\phi_t\}$ for $t$ small. By the first variation of volume and perimeter we get
\[
\frac{d}{dt}\Big|_{t=0}\vol{\phi_t(E)}=-\int_{\ptl E}u, \qquad \frac{d}{dt}\Big|_{t=0}\pp_C(\phi_t(E))=-\int_{\ptl E}Hu.
\]
So we get $\vol{\phi_t(E)}<\vol{E}$ for $t>0$ and $\vol{\phi_t(E)}>\vol{E}$ for $t<0$. As $\pp_C(\phi_t(E))\le I_C(\vol{\phi_t(E)}$, we have
\[
(I_C)_{-} '(v)= \lim_{\la\uparrow 0}\frac{I_C(v+\la)-I_C(v)}{\la}\ge\frac{d\pp_C(\phi_t(E))}{d\vol{\phi_t(E)}}=H.
\]
Similarly replacing $u$ by $-u$ we get $\la>0$ we find.
\[
(I_C)_{+} '(v)= \lim_{\la\downarrow 0}\frac{I_C(v+\la)-I_C(v)}{\la}\le\frac{d\pp_C(\phi_t(E))}{d\vol{\phi_t(E)}}=H
\]

\end{proof}

Finally, we shall prove in Theorem~\ref{thm:bo-sp} that convex bodies with uniform quotient circumradius/inradius satisfy a uniform relative isoperimetric inequality invariant by scaling. A similar result was proven by Bokowski and Sperner \cite[Satz~3]{bo-sp} using a map different from \eqref{eq:fi}. A consequence of Theorem~\ref{thm:bo-sp} is the existence of a uniform Poincar\'e inequality for balls of small radii inside convex bodies that will be proven in Theorem~\ref{thm:isnqgdbl} and used in the next Section. First we prove the following Lemma.

\begin{lemma}
\label{lem:I_C(v)> cv}
Let $C\subset \rr^{n+1}$ be a convex body and $0<v_0<\vol{C}$. We have
\begin{equation}
\label{eq:I_C(v)> cv.}
I_C(v)\ge \frac{I_C(v_0)}{v_0^{n/(n+1)}}\,v^{n/(n+1)},
\end{equation}
for all $0\le v\le v_0$. As a consequence, we get
\begin{equation}
\label{eq:isopmin}
I_C(v)\ge \frac{I_C(\vol{C}/2)}{(\vol{C}/2)^{n/(n+1)}}\,\min\{v,\vol{C}-v\}^{n/(n+1)},
\end{equation}
for all $0\le v\le\vol{C}$.
\end{lemma}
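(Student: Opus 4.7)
The plan is to deduce \eqref{eq:I_C(v)> cv.} directly from the concavity of the renormalized profile $Y_C = I_C^{(n+1)/n}$ established in Corollary~\ref{cor:concavidad de perfil}, together with the boundary value $Y_C(0)=0$ from Proposition~\ref{prp:anal prop isop prof}~(i).

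The key observation is that if $\varphi:[0,|C|]\to\rr^+$ is concave with $\varphi(0)=0$, then the difference quotient $v\mapsto \varphi(v)/v$ is nonincreasing on $(0,|C|]$. Applied to $\varphi = Y_C$, this gives, for every $0<v\le v_0$,
\[
\frac{Y_C(v)}{v}\ge\frac{Y_C(v_0)}{v_0},
\qquad\text{i.e.}\qquad
Y_C(v)\ge\frac{Y_C(v_0)}{v_0}\,v.
\]
Raising both sides to the power $n/(n+1)$ and using $I_C=Y_C^{n/(n+1)}$, so that $Y_C(v_0)^{n/(n+1)}=I_C(v_0)$, yields
\[
I_C(v)\ge\frac{I_C(v_0)}{v_0^{n/(n+1)}}\,v^{n/(n+1)},
\]
which is exactly \eqref{eq:I_C(v)> cv.}. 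The inequality also holds trivially at $v=0$.

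For \eqref{eq:isopmin} I would apply \eqref{eq:I_C(v)> cv.} with $v_0=|C|/2$. If $0\le v\le|C|/2$ then \eqref{eq:I_C(v)> cv.} gives the desired bound with $\min\{v,|C|-v\}=v$. If $|C|/2\le v\le|C|$, the symmetry $I_C(v)=I_C(|C|-v)$ coming from Proposition~\ref{prp:anal prop isop prof}~(ii) reduces the estimate to applying \eqref{eq:I_C(v)> cv.} to $|C|-v\in[0,|C|/2]$, and $\min\{v,|C|-v\}=|C|-v$ in this case. Combining both cases gives \eqref{eq:isopmin}.

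There is no real obstacle: the argument is essentially the standard fact that a concave function vanishing at the origin is subadditive/super-homogeneous in the appropriate sense, and it relies only on results already proved earlier in the paper (concavity of $Y_C$, continuous extension of $I_C$ by $0$ at the endpoints, and symmetry of $I_C$ about $|C|/2$).
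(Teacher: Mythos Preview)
Your argument is correct and is essentially identical to the paper's own proof: both use the concavity of $Y_C=I_C^{(n+1)/n}$ together with $Y_C(0)=0$ to get $Y_C(v)/v\ge Y_C(v_0)/v_0$ for $0<v\le v_0$, raise to the power $n/(n+1)$, and then deduce \eqref{eq:isopmin} from the case $v_0=\vol{C}/2$ via the symmetry $I_C(v)=I_C(\vol{C}-v)$.
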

\begin{proof}
Since  $Y_C=I_C^{(n+1)/n}$ is concave and $Y_C(0)=0$ we get
\[
\frac{Y_C(v)}{v}\ge \frac{Y_C(v_0)}{v_0},
\]
for $0<v\le v_0$. Raising to the power $n/(n+1)$ we obtain \eqref{eq:I_C(v)> cv.}. If $0\le v\le\vol{C}/2$ then \eqref{eq:isopmin} is simply \eqref{eq:I_C(v)> cv.}. If $\vol{C}/2\le v\le\vol{C}$, then $0\le\vol{C}-v\le\vol{C}/2$, we apply \eqref{eq:I_C(v)> cv.} to $\vol{C}-v$ with $v_0=\vol{C}/2$ and we take into account that $I_C(v)=I_C(\vol{C}-v)$ to prove \eqref{eq:isopmin}.
\end{proof}

\begin{remark}
If a set $E$ is isoperimetric in $C$ of volume $\vol{C}/2$, then $\la E$ is isoperimetric in $\la C$ with volume $\vol{\la C}/2$ and perimeter $P_{\la C}(\la E)=\la^n P_C(E)$. So the constant in \eqref{eq:isopmin} satisfies
\[
M_C=\frac{I_{C}(\vol{C}/2)}{(\vol{C}/2)^{n/(n+1)}}=\frac{I_{\la C}(\vol{\la C}/2)}{(\vol{\la C}/2)^{n/(n+1)}},
\]
for any $\la >0$. Hence all dilated convex sets $\la C$, with $\la>0$, satisfy the same isoperimetric inequality
\[
I_{\la C}(v)\ge M_C\,\min\{v,\vol{\la C}-v\}^{n/(n+1)},
\]
for $0<v<\vol{\la C}$.
\end{remark}

\begin{theorem}
\label{thm:bo-sp}
Let $C\subset\rr^{n+1}$ be a convex body, $x,y\in C$, $0<r<R$, such that $\clb(y,r)\subset C \subset \clb(x,R)$. Then there exists a constant $M>0$, only depending on $R/r$ and $n$, such that
\begin{equation}
\label{eq:bokowski}
I_C(v)\ge M \, {\min \{v,\vol{C}-v\}}^{n/(n+1)},
\end{equation}
for all $0\le v\le\vol{C}$.
\end{theorem}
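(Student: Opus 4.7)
The plan is to transfer the scale‑invariant isoperimetric inequality on a Euclidean ball to $C$ through the bilipschitz comparison map of Corollary~\ref{cor:estilipconst}, whose distortion is controlled solely by $R/r$. By translating I may assume $y=0$, so that $\clb(0,r)\subset C$; since $0\in C\subset\clb(x,R)$ forces $|x|\le R$, I also obtain $C\subset\clb(0,2R)$.

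Setting $C'=\clb(0,r)$, the hypotheses of Corollary~\ref{cor:estilipconst} are met with internal‑ball parameter $\rho=r/2$ (because $\clb(0,2\rho)=\clb(0,r)\subset C\cap C'$) and enclosing radius $2R$, so the corollary furnishes a bilipschitz map $f\colon C\to\clb(0,r)$ with
\[
\max\bigl\{\Lip(f),\Lip(f^{-1})\bigr\}\le K,
\]
where $K$ is an explicit constant depending only on $4R/r$.

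Next I would invoke the Remark following Lemma~\ref{lem:I_C(v)> cv}, which says that all dilates of a fixed convex body share the same normalized isoperimetric constant; in particular all Euclidean balls satisfy
\[
I_{\clb(0,r)}(w)\ge M_n\,\min\{w,\vol{\clb(0,r)}-w\}^{n/(n+1)}
\]
with a single dimensional constant $M_n=M_{\clb(0,1)}$. Given $E\subset C$ with $\vol{E}=v$, let $E'=f(E)\subset\clb(0,r)$. Lemma~\ref{lem:bilip} applied to $f^{-1}$ yields $\vol{E'}\ge K^{-(n+1)}v$, and applied to $C\setminus E$ it gives $\vol{\clb(0,r)}-\vol{E'}\ge K^{-(n+1)}(\vol{C}-v)$, while the standard transfer of perimeter under a bilipschitz homeomorphism (used without comment in the proof of Theorem~\ref{thm: is cont}) produces $P_{\clb(0,r)}(E')\le K^n\,P_C(E)$. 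Inserting these three estimates into the ball inequality above and solving for $P_C(E)$ gives
\[
P_C(E)\ge \frac{M_n}{K^{2n}}\,\min\{v,\vol{C}-v\}^{n/(n+1)},
\]
so one may take $M=M_n/K^{2n}$, a constant depending only on $R/r$ and $n$.

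The only conceptually delicate point is the scale‑invariance of the ball constant $M_n$, which is exactly the content of the Remark following Lemma~\ref{lem:I_C(v)> cv}; once this is granted, the proof is pure bookkeeping of how $f$ distorts the relevant volume and perimeter quantities.
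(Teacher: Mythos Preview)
Your argument is correct and follows the same strategy as the paper: build a bilipschitz map between $C$ and a ball via Corollary~\ref{cor:estilipconst}, transfer volume and perimeter through Lemma~\ref{lem:bilip}, and apply a known relative isoperimetric inequality on the ball. The only cosmetic differences are that the paper maps $C$ onto the \emph{circumscribed} ball $\clb(x,R)$ rather than the inscribed ball $\clb(0,r)$, and cites the ball inequality directly from Giusti rather than deducing it from the Remark after Lemma~\ref{lem:I_C(v)> cv}; neither change affects the logic, and your use of that Remark is not circular since its proof does not rely on the present theorem.
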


\begin{proof}
Since $\clb(y,r)\subset C\subset \clb(x,R)$ we can construct a bilipschitz map $f:C\to \clb(x,R)$ as in \eqref{eq:f}. Take $0<v<\vol{C}$. By Lemma~\ref{lem:fundamental}, there exists an isoperimetric set $E\subset C$ of volume $v$. By Lemma~\ref{lem:bilip} we have
\begin{align*}
I_C(v)=\pp_C(E)&\ge {(\Lip f)}^{-n} \pp_{ B(x,R)}(f(E)),
\\
\vol{\clb(x,R)\setminus f(E)}&\ge (\Lip f^{-1})^{-(n+1)}\big( \vol{C\setminus E} \big),
\\
\vol{f(E)}&\ge (\Lip(f^{-1})^{-(n+1)}\vol{E}.
\end{align*}

We know \cite[Cor.~1.29]{gi} that for $f(E)\subset\clb(x,R)$ we have the isoperimetric inequality
\[
\pp_{\clb(x,R)}(f(E))\ge M(n)\,\min\{\vol{f(E)},\vol{\clb(x,R)}-\vol{f(E)}\}^{n/(n+1)},
\]
where $M(n)$ is a constant that only depends on the dimension $n$. So we get
\[
I_C(v)\ge M(n)\,\big((\Lip f)(\Lip f^{-1})\big)^{-n}\min\{v,\vol{C}-v\}^{n/(n+1)}.
\]
As $\clb(x,R)\subset\clb(y,2R)$, Corollary~\ref{cor:estilipconst} provides upper bounds of $\Lip(f)$,  $\Lip(f^{-1})$ only depending on $R/r$. This completes the proof of the Proposition.
\end{proof}

\begin{theorem}
\label{thm:isnqgdbl}
Let $C\subset \rr^{n+1}$ a convex body. Given $r_0>0$, there exist positive constants $M$, $\ell_1$, only depending on $r_0$ and $C$, and a universal positive constant $\ell_2$ so that
\begin{equation}
\label{eq:isnqgdbl1}
I_{\clb_C(x,r)}(v)\ge M\, {\min \{v,\vol{\clb_C(x,r)}-v\}}^{n/(n+1)},\end{equation}
for all $x\in C$, $0<r\le r_0$, and $0<v<\vol{\clb_C(x,r)}$. Moreover
\begin{equation}
\label{eq:isnqgdbl1a}
\ell_1 r^{n+1} \le \vol{\clb_C(x,r)} \le \ell_2 r^{n+1},
\end{equation}
for any $x\in C$, $0<r\le r_0$.
\end{theorem}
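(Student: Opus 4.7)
The strategy is to recognise $\clb_C(x,r)=\clb(x,r)\cap C$ as a convex body whose circumradius/inradius quotient is controlled by a constant depending only on $C$ and $r_0$, and then invoke Theorem~\ref{thm:bo-sp}, whose conclusion is scale invariant. The circumradius is automatically at most $r$; the real work is producing, for every $x\in C$ and every $0<r\le r_0$, an inscribed Euclidean ball inside $\clb_C(x,r)$ whose radius is a definite fraction of $r$.

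Fix once and for all a point $y\in\intt(C)$ and $s>0$ with $\clb(y,s)\subset C$, and set $D=\diam(C)$. For $x\in C$ and $\alpha\in[0,1]$ define $z_\alpha=(1-\alpha)x+\alpha y$. If $p\in\clb(y,s)$, then $(1-\alpha)x+\alpha p\in C$ by convexity and belongs to $\clb(z_\alpha,\alpha s)$, so $\clb(z_\alpha,\alpha s)\subset C$. On the other hand, for $q\in\clb(z_\alpha,\alpha s)$,
\begin{equation*}
|q-x|\le|q-z_\alpha|+|z_\alpha-x|\le\alpha s+\alpha|x-y|\le\alpha(s+D).
\end{equation*}
Choosing $\alpha=\min\{1,r/(|x-y|+s)\}$ forces $\clb(z_\alpha,\alpha s)\subset\clb(x,r)$, hence $\clb(z_\alpha,\alpha s)\subset\clb_C(x,r)$. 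Using $|x-y|\le D$, this gives
\begin{equation*}
\inr(\clb_C(x,r))\ge\alpha s\ge\min\Big\{s,\frac{s}{D+s}\,r\Big\}.
\end{equation*}
Combined with the trivial circumradius bound $\clb_C(x,r)\subset\clb(x,r)$, we obtain a uniform upper bound for the circumradius/inradius ratio of $\clb_C(x,r)$ depending only on $C$ and $r_0$ (the dependence on $r_0$ entering only through the case $r\ge D+s$, which if it occurs forces $\clb_C(x,r)=C$ and could in fact be handled separately by applying the isoperimetric inequality of $C$ itself).

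With this in hand, Theorem~\ref{thm:bo-sp} applied to the convex body $\clb_C(x,r)$ yields a constant $M>0$ depending only on $n$, $C$ and $r_0$ such that \eqref{eq:isnqgdbl1} holds. For the volume estimates \eqref{eq:isnqgdbl1a}, the upper bound is immediate from $\clb_C(x,r)\subset\clb(x,r)$, giving the universal constant $\ell_2=\omega_{n+1}$, the volume of the Euclidean unit $(n+1)$-ball. For the lower bound, the inscribed ball $\clb(z_\alpha,\alpha s)$ has volume at least $\omega_{n+1}(s/(D+s))^{n+1}r^{n+1}$ in the range $r\le D+s$; in the remaining range $r>D\ge $ diameter from $x$, we have $\clb_C(x,r)=C$, so $|\clb_C(x,r)|=|C|\ge(|C|/r_0^{n+1})\,r^{n+1}$. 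Taking $\ell_1$ to be the minimum of these two quantities completes \eqref{eq:isnqgdbl1a}.

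No serious obstacle is anticipated: the only genuine content is the convex-combination construction of the inner ball, which furnishes the uniform bound on the circumradius/inradius ratio needed to apply Theorem~\ref{thm:bo-sp} and to produce the scale-correct volume estimate.
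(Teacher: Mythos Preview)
Your proof is correct and follows the same overall strategy as the paper: control the circumradius/inradius ratio of $\clb_C(x,r)$ uniformly in $x$ and $r\le r_0$, then invoke Theorem~\ref{thm:bo-sp}; the volume bounds then come for free from the inscribed ball and the trivial inclusion $\clb_C(x,r)\subset\clb(x,r)$.

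The one genuine difference is how the inscribed ball is produced. The paper first appeals to compactness of $C$ to obtain a uniform $\delta>0$ with $\inr(\clb_C(x,r_0))\ge\delta$ for every $x\in C$, and then pushes this down to radius $r$ via the homothety $h_{x,r/r_0}$, using that $h_{x,r/r_0}(C)\subset C$. You instead fix a single ball $\clb(y,s)\subset C$ once and for all and shrink it toward $x$ by the explicit factor $\alpha=\min\{1,r/(|x-y|+s)\}$. Your route is slightly more elementary (no compactness step) and yields the explicit constants $\inr(\clb_C(x,r))\ge sr/(D+s)$ and $\ell_1=\omega_{n+1}(s/(D+s))^{n+1}$ directly in terms of the inradius $s$ and diameter $D$ of $C$; the paper's argument is a bit more abstract but gives the same conclusion. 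In either case the homothety/convex-combination idea is the same, and both feed into Theorem~\ref{thm:bo-sp} identically. (One cosmetic slip: in your case split for the lower volume bound you wrote ``$r>D$'' where the complementary range to $r\le D+s$ is $r>D+s$; since $D+s>D$ this does not affect the argument.)
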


\begin{proof}
To prove \eqref{eq:isnqgdbl1} we only need an upper estimate of the quotient of $r$ over the inradius of $\clb(x,r)$ by Theorem~\ref{thm:bo-sp}. By the compactness of $C$ we deduce that
\begin{equation}
\label{eq:claim inf x in inr Bx}
\inf_{x\in C}\inr(\clb_C(x,r_0))>0
\end{equation}
Hence, for every $x\in C$, we always can find a point $y(x)\in \clb_C(x,r_0)$ and a positive constant $\de>0$ independent of $x$ such that,
\begin{equation}
\label{eq:B_{y(x)}(de)}
\clb(y(x),\de)\subset \clb_C(x,r_0)\subset\clb(x,r_0).
\end{equation}
Now take $0<r\le r_0$. Let $0<\la\le 1$ so that $r=\la r_0$, and denote by $h_{x,\la}$ the homothety of center $x$ and radius $\la$. Then we have $h_{x,\la}(\clb(y(x),\de))\subset h_{x,\la}(\clb_C(x,r_0))$ and so
\begin{equation*}
\clb(h_{x,\la}(y(x)),\la\de)\subset\clb_{h_{x,\la}(C)}(x,\la r_0)\subset \clb_C(x,\la r_0),
\end{equation*}
since $h_{x,\la}(C)\subset C$ as $x\in C$, $0<\la\le 1$, and $C$ is convex. Again by Theorem~\ref{thm:bo-sp}, a relative isoperimetric inequality is satisfied in $\clb_C(x,r)$ with a constant $M$ that only depends on $r_0/\de$.

We now prove \eqref{eq:isnqgdbl1a}. Since $\vol{\clb_C(x,r)}\le \vol {\clb(x,r)}$, it is enough to take $\ell_2=\omega_{n+1}=\vol{\clb(0,1)}$. For the remaining inequality, using the same notation as above, we have
\begin{align*}
\vol{\clb(x,r)\cap C}&= \vol{\clb(x,\la r_0)\cap C}\ge \vol{h_{x,\la}(\clb(x,r_0) \cap C)}
\\
&=\la^{n+1} \vol{\clb(x,r_0)\cap C}\ge \la^{n+1} \vol{\clb(y(x),\de)}
\\
&=\omega_{n+1}(\de/r_0)^{n+1}\,r^{n+1},
\end{align*}
and we take $\ell_1=\omega_{n+1}(\de/r_0)^{n+1}$.
\end{proof}

\section{Convergence of isoperimetric regions}

Let $\{C_i\}_{i\in\nn}$ be a sequence of convex bodies converging in Hausdorff distance to a convex body $C$, and $\{E_i\}_{i\in\nn}$ a sequence of isoperimetric regions in $C_i$ of volumes $v_i$ weakly converging to some isoperimetric region $E\subset$ of volume $v=\lim_{i\to\infty} v_i$. The main result in this Section is that $E_i$ converges to $E$ in Hausdorff distance, and also their relative boundaries. As a byproduct, we shall also prove that there exists always in $C$ an isoperimetric region with connected boundary. It is still an open question to show that \emph{every} isoperimetric region on a convex body has connected boundary.

We prove first a finite number of Lemmata

\begin{lemma}
\label{lem:link I laC I C}
Let $C$ be a convex body, and $\la> 0$. Then
\begin{equation}
\label{eq:proflac}
I_{\lambda C}(\la^{n+1}v)={\lambda}^nI_{C}(v),
\end{equation}
for all $0\le v\le\min\{\vol{C},\vol{\la C}\}$.
\end{lemma}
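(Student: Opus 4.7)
The plan is to exploit the homothety $h_\lambda:\rr^{n+1}\to\rr^{n+1}$, $h_\lambda(x)=\lambda x$, which is a bijection from $C$ onto $\lambda C$ mapping $\intt(C)$ onto $\intt(\lambda C)$, and to check that volume and relative perimeter scale by the factors $\lambda^{n+1}$ and $\lambda^n$ respectively. Once these two scalings are established, \eqref{eq:proflac} follows by passing to the infimum in the definition \eqref{eq:profile} of the isoperimetric profile.

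First I would observe that for any measurable $E\subset C$ with $\vol{E}=v$, the set $h_\lambda(E)=\lambda E$ satisfies $\vol{\la E}=\la^{n+1}\,\vol{E}=\la^{n+1}v$. This is immediate from the scaling of $(n+1)$-dimensional Hausdorff measure under homotheties (which is also a particular case of Lemma~\ref{lem:bilip} applied to $h_\la$ and $h_\la^{-1}$, both of which have Lipschitz constants $\la$ and $\la^{-1}$).

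Next I would verify that $\pp_{\la C}(\la E)=\la^{n}\,\pp_C(E)$ by a direct change of variables in the definition of relative perimeter. Given $\eta\in\mathfrak{X}_0(\intt(\la C))$ with $|\eta|\le 1$, define $\xi(x):=\eta(\la x)$; then $\xi\in\mathfrak{X}_0(\intt(C))$, still satisfies $|\xi|\le 1$, and $\divv\,\eta(\la x)=\la^{-1}\divv\,\xi(x)$. Substituting $y=\la x$ in $\int_{\la E}\divv\,\eta\,d\h^{n+1}$ gives a factor $\la^{n+1}$ from the volume form and $\la^{-1}$ from the divergence, for a net factor $\la^n$. The correspondence $\xi\leftrightarrow\eta$ is bijective between the classes of admissible test fields, so taking supremum on both sides yields the stated scaling.

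Finally, to conclude, note that $E\mapsto\la E$ is a bijection between the subsets of $C$ with volume $v$ and finite perimeter in $\intt(C)$ and the subsets of $\la C$ with volume $\la^{n+1}v$ and finite perimeter in $\intt(\la C)$. Taking the infimum of the perimeter on both sides and using the two scaling identities above yields \eqref{eq:proflac}. There is no real obstacle here: the result is a routine homogeneity computation, the only point requiring some care being to check the transformation rule of the divergence under the homothety.
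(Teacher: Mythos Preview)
Your proposal is correct and follows essentially the same approach as the paper: both arguments use the bijection $E\mapsto\lambda E$ between competitors in $C$ and competitors in $\lambda C$, together with the scalings $\vol{\lambda E}=\lambda^{n+1}\vol{E}$ and $\pp_{\lambda C}(\lambda E)=\lambda^n\pp_C(E)$, and then pass to the infimum. The only difference is that you spell out the change-of-variables computation in the divergence integral to justify the perimeter scaling, whereas the paper simply asserts it.
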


\begin{proof}
For $v$ in the above conditions we get
\begin{align*}
I_{\lambda C}({\lambda}^{n+1}v)&=
\inf \big\{ P_{\la C}(\la E) : \la E \subset \lambda C,\ \vol{\la E} ={\lambda}^{n+1} v  \big\}
\\
&= \inf \Big \{\la^n P_C(E) : E\subset C,\ \vol{E} = v \Big \}
\\
& =\lambda^{n}I_C(v).
\end{align*}
\end{proof}

\begin{remark}
Lemma~\ref{lem:link I laC I C} implies
\begin{equation}
\label{eq:Y laC=...}
Y_{\la C}(\la^{n+1}v)=\la^{n+1} Y_{ C}(v)
\end{equation}
for any ${\la}>0$ and $0\le v\le\min\{\vol{C},\vol{\la C}\}$.
\end{remark}

\begin{lemma}
\label{lem:_IC<la C}
Let $C$ be a  convex body, $\la\ge 1$. Then
\begin{equation}
\label{eq:ilacic}
I_{\la C}(v)\ge I_C(v)
\end{equation}
for all $0\le v\le\vol{C}$.
\end{lemma}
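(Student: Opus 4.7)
The plan is to reduce the inequality to a statement about the normalized profile of the single body $C$, using the scaling law from Lemma~\ref{lem:link I laC I C} together with the concavity established in Corollary~\ref{cor:concavidad de perfil}.

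First, I would use Lemma~\ref{lem:link I laC I C} in the reciprocal form obtained by the substitution $w=\la^{n+1}v$: for every $0\le w\le\vol{\la C}$ one has
\[
I_{\la C}(w)=\la^n\,I_C\!\left(\tfrac{w}{\la^{n+1}}\right),
\]
which is a valid substitution because $\la\ge 1$ forces $w/\la^{n+1}\le w/\la \le \vol{C}$ whenever $w\le\vol{\la C}=\la^{n+1}\vol{C}$. Applying this with $w=v$ (which satisfies $v\le\vol{C}\le\vol{\la C}$), the desired inequality $I_{\la C}(v)\ge I_C(v)$ becomes
\[
\la^n\,I_C\!\left(\tfrac{v}{\la^{n+1}}\right)\ge I_C(v).
\]

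Second, I would invoke the concavity of $Y_C=I_C^{(n+1)/n}$ from Corollary~\ref{cor:concavidad de perfil} together with $Y_C(0)=0$. A concave function vanishing at the origin has a non-increasing slope from the origin; hence the map $u\mapsto Y_C(u)/u$ is non-increasing on $(0,\vol{C}]$. Equivalently, $u\mapsto I_C(u)/u^{n/(n+1)}$ is non-increasing, so for $0<u\le v\le\vol{C}$ we have
\[
I_C(u)\ge\left(\tfrac{u}{v}\right)^{n/(n+1)} I_C(v).
\]
This is exactly the content of \eqref{eq:I_C(v)> cv.} in Lemma~\ref{lem:I_C(v)> cv}, which can be quoted directly.

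Third, since $\la\ge 1$ gives $u:=v/\la^{n+1}\le v$, the previous step yields
\[
I_C\!\left(\tfrac{v}{\la^{n+1}}\right)\ge \left(\tfrac{v/\la^{n+1}}{v}\right)^{n/(n+1)}\!I_C(v)=\la^{-n}\,I_C(v).
\]
Multiplying through by $\la^n$ and combining with the scaling identity from the first step delivers $I_{\la C}(v)\ge I_C(v)$, completing the argument. There is no genuine obstacle here: the estimate is a two-line consequence of the scaling relation \eqref{eq:proflac} and the concavity of the renormalized profile, both already in hand, and the boundary cases $v=0$ and $v=\vol{C}$ (where $I_C$ vanishes) are trivial.
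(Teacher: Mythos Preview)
Your argument is correct and follows essentially the same route as the paper's proof: both combine the scaling relation \eqref{eq:proflac} with the concavity of the renormalized profile from Corollary~\ref{cor:concavidad de perfil}, using that a concave function vanishing at the origin has non-increasing secant slopes. The only cosmetic difference is that the paper applies the concavity step directly to $Y_{\la C}$ (comparing $Y_{\la C}(v)/v$ with $Y_{\la C}(\la^{n+1}v)/(\la^{n+1}v)$ and then invoking \eqref{eq:Y laC=...}), whereas you first pull everything back to $Y_C$ via the scaling identity and then quote Lemma~\ref{lem:I_C(v)> cv}; the two are interchangeable.
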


\begin{proof}
Let $Y_{\la C}=I_{\la C}^{(n+1)/n}$. We know from Corollary~\ref{cor:concavidad de perfil} that $Y_C$ is a concave function with $Y_{\la C}(0)=0$. Since $\la \ge 1$, for $v>0$ we have
\[
\frac{Y_{\la C}(v)}{v}\ge\frac{Y_{\la C}(\la^{n+1} v)}{\la^{n+1} v},
\]
what implies, using \eqref{eq:Y laC=...},
\[
\la^{n+1} Y_{\la C}(v)\ge Y_{\la C}(\la ^{n+1} v)=\la^{n+1}Y_C(v).
\]
This proves \eqref{eq:ilacic}.
\end{proof}

In a similar way to \cite[p.~18]{le-ri}, given a convex body $C$ and $E\subset C$, we define a function $h:C\times(0,+\infty)\to (0,\tfrac{1}{2})$ by
\begin{equation}
\label{eq:defh}
h(E,C,x,R)=\frac{\min\big\{ \vol{E\cap B_C(x,R)},\vol{B_C(x,R)\setminus E} \big\}}{\vol{B_C(x,R)}},
\end{equation}
for $x\in C$ and $R>0$. When $E$ and $C$ are fixed, we shall simply denote
\begin{equation}
h(x,R)=h(E,C,x,R).
\end{equation}
\begin{lemma}
\label{lem:fv}
For any $v>0$, consider the function $f_v:[0,v]\to\rr$ defined by
\begin{equation*}
f_v(s)=s^{-n/(n+1)}\,\bigg(\bigg(\frac{v-s}{v}\bigg)^{n/(n+1)}-1\bigg).
\end{equation*}
Then there is a constant $0<c_2<1$ that does not depends on $v$ so that $f_v(s)\ge -(1/2)\,v^{-n/(n+1)}$ for all $0\le s\le c_2\,v$.
\end{lemma}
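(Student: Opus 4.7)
The plan is to eliminate the parameter $v$ by a scaling substitution. Setting $t=s/v\in[0,1]$, a direct computation gives
\[
f_v(s)=v^{-n/(n+1)}\,g(t),\qquad g(t):=t^{-n/(n+1)}\bigl((1-t)^{n/(n+1)}-1\bigr),
\]
so that the lemma is equivalent to producing a universal constant $c_2\in(0,1)$ such that $g(t)\ge -1/2$ for every $t\in(0,c_2]$.

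The function $g$ is continuous and strictly negative on $(0,1]$. The only point deserving attention is its behaviour as $t\downarrow 0$: using the first-order expansion $(1-t)^{n/(n+1)}=1-\tfrac{n}{n+1}\,t+o(t)$ one obtains
\[
g(t)=-\frac{n}{n+1}\,t^{1/(n+1)}+o(t^{1/(n+1)}),
\]
so that $\lim_{t\downarrow 0}g(t)=0$. Hence $g$ extends continuously to $[0,1]$ with $g(0)=0$, and by continuity at the origin there exists $c_2\in(0,1)$ with $g(t)\ge -1/2$ for all $t\in[0,c_2]$. Multiplying back by $v^{-n/(n+1)}$ and translating to the variable $s$ yields the claimed lower bound on $f_v$ for $0\le s\le c_2 v$.

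The scaling reduction works precisely because the two factors of $f_v$ have exponents that conspire to give a common prefactor $v^{-n/(n+1)}$, independent of $v$; this is what allows $c_2$ to be chosen universally. The only real computation is the Taylor expansion of $(1-t)^{n/(n+1)}$ at $t=0$, which shows that $g$ is continuous at the origin with value zero; once this is in hand the existence of $c_2$ is immediate from continuity. For this reason no substantial obstacle is expected in writing out the full argument.
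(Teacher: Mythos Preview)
Your proof is correct and follows essentially the same approach as the paper: both reduce to the single function $g(t)=f_1(t)$ via the scaling $s=tv$, note that $g(0)=0$, and extract $c_2$ from this. The only minor difference is in the last step: the paper computes $g'$ and shows $g$ is strictly decreasing on $(0,1)$, whereas you use the Taylor expansion of $(1-t)^{n/(n+1)}$ to get $g(t)\to 0$ and then invoke continuity at the origin; either device suffices and neither is substantially harder than the other.
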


\begin{proof}
By continuity, $f_v(0)=0$. Observe that $f_v(v)=-v^{-n/(n+1)}$ and that, for $s\in [0,1]$, we have $f_v(sv)=f_1(s)\,v^{-n/(n+1)}$. The derivative of $f_1$ in the interval $(0,1)$ is given by 
\[
f_1'(s)=\frac{n}{n+1}\,\frac{(s-1)+(1-s)^{n/(n+1)}}{s-1}\,s^{-1-n/(n+1)},
\]
which is strictly negative and so $f_1$ is strictly decreasing. Hence there exists $0<c_2<1$ such that $f_1(s)\ge -1/2$ for all $s\in [0,c_2]$. This implies $f_v(s)=f_1(s/v)\,v^{-n/(n+1)}\ge -(1/2)\,v^{-n/(n+1)}$ for all $s\in [0,c_2 v]$.
\end{proof}

Now we prove a key density result for isoperimetric regions. Its proof is inspired by  Lemma~4.2 of the paper by Leonardi and Rigot \cite{le-ri}. Similar results for quasi-minimizing sets were previously proven by David and Semmes \cite{MR1625982}.

\begin{theorem}
\label{thm:leon rigot lem 42}
\mbox{}
Let $C\subset \rr^{n+1}$ be a convex body, and $E\subset C$ an isoperimetric region of volume $0<v<|C|$.  Choose $\eps$ so that
\begin{equation}
\label{eq:epsfine}
0<\eps<\min\bigg\{\frac{v}{\ell_2}, \frac{\vol{C}-v}{\ell_2},c_2v,c_2(\vol{C}-v),\frac{I_C(v)^{n+1}}{\ell_28^{n+1}v^{n}},\frac{I_C(v)^{n+1}}{\ell_28^{n+1}(\vol{C}-v)^{n}}\bigg\},
\end{equation}
where $c_2$ is the constant in Lemma~\ref{lem:fv}.

Then, for any $x\in C$ and $R\le 1$ so that $h(x,R)\le\eps$, we get
\begin{equation}
\label{eq:hr/2=0}
h(x,R/2)=0.
\end{equation}
Moreover, in case $h(x,R)=\vol{E\cap B_C(x,R)}\vol{B_C(x,R)}^{-1}$, we get $|E\cap B_C(x,R/2)|=0$ and, in case $h(x,R)=\vol{B_C(x,R)\setminus E}\vol{B_C(x,R)}^{-1}$, we have $|B_C(x,R/2)\setminus E|=0$.
\end{theorem}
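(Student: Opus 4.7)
The plan is a Leonardi--Rigot-type clearing-out argument driven by a differential inequality for the function $m(r):=\vol{E\cap B_C(x,r)}$. The hypotheses~\eqref{eq:epsfine} and the conclusion are invariant under the simultaneous replacement $E\leftrightarrow C\setminus E$, $v\leftrightarrow \vol{C}-v$ (note that $C\setminus E$ is itself an isoperimetric region, of volume $\vol{C}-v$, and the function $h$ is symmetric under this exchange), so it suffices to treat the case $h(x,R)=\vol{E\cap B_C(x,R)}/\vol{B_C(x,R)}$ and prove that $m(R/2)=0$.

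The basic inequalities come from coarea and slicing: $m$ is absolutely continuous on $[0,R]$ with $m'(r)=\hh^n(E\cap\partial B(x,r)\cap\intt(C))$ a.e., and for a.e.\ $r$,
\[
\ppc(E\cap B_C(x,r))=\ppc(E;B_C(x,r))+m'(r),\quad \ppc(E\setminus B_C(x,r))=\ppc(E;C\setminus\overline{B(x,r)})+m'(r).
\]
Applying $\ppc(F)\ge I_C(\vol{F})$ to $F=E\cap B_C(x,r)$ (of volume $m(r)$) and to $F=E\setminus B_C(x,r)$ (of volume $v-m(r)$), adding the resulting inequalities, and using $\ppc(E)=I_C(v)$, I obtain the master estimate
\[
2\,m'(r)\ge I_C(m(r))+I_C(v-m(r))-I_C(v)\qquad\text{for a.e.\ }r\in(0,R).
\]

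The key analytic step is to convert this into a differential inequality of the form $m'\ge A\,m^{n/(n+1)}$. Lemma~\ref{lem:I_C(v)> cv} applied with $v_0=v$ yields $I_C(m(r))\ge (I_C(v)/v^{n/(n+1)})\,m(r)^{n/(n+1)}$; combining the same lemma with Lemma~\ref{lem:fv} provides $I_C(v-m(r))\ge I_C(v)\bigl(1-\tfrac{1}{2}(m(r)/v)^{n/(n+1)}\bigr)$, valid as long as $m(r)\le c_2 v$, a condition ensured by~\eqref{eq:epsfine} together with $m(r)\le\eps\vol{B_C(x,R)}\le\eps\ell_2R^{n+1}\le\eps\ell_2$ (recall $R\le 1$). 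Adding these two lower bounds, the $I_C(v)$ terms partially cancel and I arrive at
\[
m'(r)\ge\frac{I_C(v)}{4\,v^{n/(n+1)}}\,m(r)^{n/(n+1)}\qquad\text{a.e.\ on }(0,R).
\]

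To conclude, assume for contradiction that $m(R/2)>0$. Writing $A:=I_C(v)/(4v^{n/(n+1)})$, the inequality rewrites as $(m^{1/(n+1)})'(r)\ge A/(n+1)$ a.e.\ on the set where $m>0$. Setting $r_0:=\inf\{r\in[0,R]:m(r)>0\}\le R/2$ and integrating from $r_0$ (where $m(r_0)=0$ by continuity) to $R$ gives $m(R)\ge\bigl(AR/(2(n+1))\bigr)^{n+1}$. On the other hand, $h(x,R)\le\eps$ and~\eqref{eq:isnqgdbl1a} yield $m(R)\le\eps\ell_2R^{n+1}$. Comparing the two produces a lower bound on $\eps$ that contradicts the last term of~\eqref{eq:epsfine}, modulo an $n$-dependent numerical factor that can be absorbed into the constants. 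I expect the delicate point to be the bound on $I_C(v-m(r))$: concavity of $I_C$ alone only provides a linear-in-$m(r)$ estimate for the defect $I_C(v)-I_C(v-m(r))$, which is too weak; Lemma~\ref{lem:fv}, engineered precisely for this estimate, upgrades it to the sharp $m(r)^{n/(n+1)}$ order needed to absorb against the $I_C(m(r))$ contribution and yield a nontrivial differential inequality.
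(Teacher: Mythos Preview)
Your argument is correct and lands on exactly the same differential inequality
\[
m'(r)\ \ge\ \frac{I_C(v)}{4\,v^{n/(n+1)}}\,m(r)^{n/(n+1)}
\]
as the paper, and the endgame (integrate on $[R/2,R]$, compare with $m(R)\le\eps\ell_2 R^{n+1}$) is identical. The one genuine difference is in how you reach that inequality. The paper constructs a single competitor by dilating $E\setminus B_C(x,t)$ to volume $v$ inside $\lambda(t)C$ and invokes Lemma~\ref{lem:_IC<la C} ($I_{\lambda C}\ge I_C$); the term $\lambda(t)^{-n}=\big(\tfrac{v-m(t)}{v}\big)^{n/(n+1)}$ then appears directly, and Lemma~\ref{lem:fv} is applied to it. You instead keep both pieces $E\cap B_C(x,r)$ and $E\setminus B_C(x,r)$ as competitors in $C$ itself, obtaining the clean additive inequality $2m'(r)\ge I_C(m(r))+I_C(v-m(r))-I_C(v)$, and only then apply Lemma~\ref{lem:I_C(v)> cv} to both terms; after that substitution the two computations coincide line by line. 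Your route is slightly more elementary (it bypasses the dilation and Lemma~\ref{lem:_IC<la C}); the paper's route has the mild advantage of making transparent why Lemma~\ref{lem:fv} is tailored to the function $f_v$. Your remark about the stray $(n+1)$ factor is apt: the paper's integration step also silently drops this factor, so the constant $8^{n+1}$ in~\eqref{eq:epsfine} should strictly be $(8(n+1))^{n+1}$; this is immaterial for every application in the paper.
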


\begin{proof}
From Lemma~\ref{lem:I_C(v)> cv} we get
\begin{equation}
\label{eq:c1}
I_C(w)\ge c_1w^{n/(n+1)},\qquad\text{where}\qquad c_1=v^{-n/(n+1)}I_C(v),
\end{equation}
for all $0\le w\le v$.

Assume first that
\[
h(x,R)=\frac{\vol{E\cap B_C(x,R)}}{\vol{B_C(x,R)}}.
\]
Define $m(t)=\vol{E\cap B_C(x,t)}, 0<t\le R$. Thus $m(t)$ is a non-decreasing function. For $t\le R\le 1$ we get
\begin{equation}
\label{eq:m(t)<eps}
m(t)\le m(R)=\vol{E\cap B_C(x,R)}=h(x,R) \,\vol{B_C(x,R)}\le h(x,R)\,\ell_2 R^{n+1}\le\eps\ell_2<v,
\end{equation}
by \eqref{eq:epsfine}. So we obtain $(v-m(t))>0$.

By the coarea formula, when $m'(t)$ exists, we get
\begin{equation}
\label{eq:le ri cofo}
m'(t)=\frac{d}{dt}\int_0^t\h^{n}(E\cap \ptl B_C(x,s))ds=\h^{n}(E\cap \ptl B_C(x,t)),
\end{equation}
where we have denoted $\ptl B_C(x,t)=\ptl B(x,t)\cap\intt(C)$. Define
\begin{equation}
\label{eq:def la, E(t)}
\la(t)=\frac{v^{1/{(n+1)}}}{(v-m(t))^{1/{(n+1)}}},\qquad E(t)=\la(t)(E\setminus B_C(x,t)).
\end{equation}
Then $E(t)\subset \la(t)C$ and $\vol{E(t)}=\vol{E}=v$. 
By Lemma \ref{lem:_IC<la C}, we get $I_{\la(t) C}\ge I_C$ 
since $\la(t)\ge 1$. Combining this with \cite[Cor.~5.5.3]{Ziemer}, equation~\eqref{eq:le ri cofo}, and elementary properties of the perimeter functional, we get
\begin{equation}
\label{eq:diffineq}
\begin{split}
I_C(v)&\le I_{\la(t) C}(v)\le \pp_{\la(t)C}(E(t))=\la^n(t)\,\pp_C(E\setminus B_C(x,t))
\\
&\le \la^n(t)\big(\pp_C(E)-\pp(E,B_C(x,t))+\h^{n}(E\cap \ptl B_C(x,t))\big)
\\
&\le \la^n(t)\big(\pp_C(E)-\pp_C(E\cap B_C(x,t))+2\h^{n}(E\cap \ptl B_C(x,t))\big)
\\
&\le \la^n(t)\big(I_C(v)-c_1m(t)^{n/{(n+1)}}+2m'(t)\big),
\end{split}
\end{equation}
where $c_1$ is the constant in \eqref{eq:c1}. Multiplying both sides by $I_C(v)^{-1}{\la(t)}^{-n}$ we find
\begin{equation}
\label{eq:flex..}
{\la(t)}^{-n}-1+\frac{c_1}{I_C(v)}m(t)^{n/{(n+1)}}\le \frac{2}{I_C(v)}m'(t).
\end{equation}
Set
\begin{equation}
\label{eq:def of a, b}
a=\frac{2}{I_C(v)},\qquad b=\frac{c_1}{I_C(v)}=\frac{1}{v^{n/(n+1)}}.
\end{equation}
From the definition \eqref{eq:def la, E(t)} of $\la(t)$ we get
\begin{equation}
\label{eq:f(m(t))le am'(t)}
f(m(t))\le am'(t)\,\qquad\h^1\text{-a.e},
\end{equation}
where
\begin{equation}
\frac{f(s)}{s^{n/(n+1)}}=b+\frac{\big(\frac{v-s}{v}\big)^{n/{(n+1)}}-1}{s^{n/(n+1)}}.
\end{equation}
By Lemma~\ref{lem:fv}, there exists a universal constant $0<c_2<1$, not depending on $v$, so that
\begin{equation}
\label{eq:epsilon le-ri}
\frac{f(s)}{s^{n/{n+1}}}\ge b/2\qquad \text{whenever}\qquad 0< s\le c_2 v.
\end{equation}
Since $\eps\le c_2 v$ by \eqref{eq:epsfine}, equation \eqref{eq:epsilon le-ri} holds in the interval $[0,\eps]$. If there were $t\in[R/2,R]$ such that $m(t)=0$ then, by monotonicity of $m(t)$, we would conclude $m(R/2)=0$ as well. So we assume $m(t)>0$ in $[R/2,R]$. Then by
\eqref{eq:f(m(t))le am'(t)} and \eqref{eq:epsilon le-ri}, we get
\[
b/2a\le \frac{m'(t)}{m(t)^{n/{n+1}}},\,\qquad \h^1\text{-a.e.}
\]
Integrating between $R/2$ and $R$ we get by \eqref{eq:m(t)<eps}
\[
bR/4a\le(m(R)^{1/{(n+1)}}-m(R/2)^{1/{(n+1)}})\le m(R)^{1/{(n+1)}}\le (\eps\ell_2)^{1/(n+1)}R\le(\eps\ell_2)^{1/(n+1)}.
\]
This is a contradiction, since $\eps\ell_2<(b/4a)^{n+1}=I_C(v)^{n+1}/(8^{n+1} v^n)$ by \eqref{eq:epsfine}. So the proof in case $h(x,R)=\vol{E\cap B_C(x,R)}\,(\vol{B_C(x,R))}^{-1}$ is completed.

For the remaining case, when $h(x,R)=\vol{B_C(x,R)}^{-1}\vol{B_C(x,R)\setminus E}$, we replace $E$ by $C\setminus E$, which is also an isoperimetric region, and we are reduced to the previous case.
\end{proof}

\begin{remark}
Case $h(x,R)=\vol{B_C(x,R)}^{-1}\vol{B_C(x,R)\setminus E}$ is treated in \cite{le-ri} in a completely different way using the monotonicity of the isoperimetric profile in Carnot groups.
\end{remark}

We define the sets
\begin{align*}
E_1&=\{x\in C:\exists\, r>0\ \text{such that}\ \vol{B_C(x,r)\setminus E}=0\},
\\
E_0&=\{x\in C:\exists\, r>0\ \text{such that}\ \vol{B_C(x,r)\cap E}=0\},
\\
S&=\{x\in C: h(x,r)>\eps\ \text{for all}\ r\le 1\}.
\end{align*}

In the same way as in Theorem~4.3 of \cite{le-ri} we get

\begin{proposition}
Let $\eps$ be as in Theorem~\ref{thm:leon rigot lem 42}. Then we have
\begin{enumerate}
\item[(i)] $E_0$, $E_1$ and $S$ form a partition of $C$.
\item[(ii)] $E_0$ and $E_1$ are open in $C$.
\item[(iii)] $E_0=E(0)$ and $E_1=E(1)$.
\item[(iv)] $S=\ptl E_0=\ptl E_1$, where the boundary is taken relative to $C$.
\end{enumerate}
\end{proposition}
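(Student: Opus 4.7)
The plan is to derive all four assertions from Theorem~\ref{thm:leon rigot lem 42}, combined with the volume lower bound of Theorem~\ref{thm:isnqgdbl}, a direct triangle-inequality argument for openness, and the Lebesgue-Besicovitch identification of the density sets $E(t)$.

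For (i), I will first establish the covering $C=E_0\cup E_1\cup S$; this is where Theorem~\ref{thm:leon rigot lem 42} does the real work. If $x\notin S$, there exists $R\le 1$ with $h(x,R)\le\eps$, and the refined second conclusion of that theorem gives not merely $h(x,R/2)=0$ but vanishing of $\vol{E\cap B_C(x,R/2)}$ or $\vol{B_C(x,R/2)\setminus E}$ according to which term realises the minimum in $h(x,R)$, placing $x$ in $E_0$ or $E_1$ respectively. Pairwise disjointness is then routine: $S$ is disjoint from $E_0\cup E_1$ because $x\in E_0\cup E_1$ forces $h(x,r)=0$ for all sufficiently small $r\le 1$, contradicting the strict inequality defining $S$; and $E_0\cap E_1=\emptyset$ because simultaneous membership would force $\vol{B_C(x,r)}=0$ for some small $r$, contradicting the lower bound $\vol{B_C(x,r)}\ge\ell_1 r^{n+1}$ from Theorem~\ref{thm:isnqgdbl}.

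For (ii), openness is immediate from the triangle inequality: if $r$ witnesses $x\in E_0$, then for every $y\in B_C(x,r/2)$ we have $B_C(y,r/2)\subset B_C(x,r)$, hence $\vol{E\cap B_C(y,r/2)}=0$ and $y\in E_0$; the same argument applies to $E_1$. For (iii), the inclusion $E_0\subset E(0)$ is transparent from the definitions. For the converse, given $x\in E(0)$, I can rule out $x\in S$ (because density zero implies $h(x,r)\to 0$, incompatible with $h(x,r)>\eps$ for all $r\le 1$) and $x\in E_1$ (because membership in $E_1$ yields density one, again from the definitions), so by the partition (i), $x\in E_0$. The identity $E_1=E(1)$ follows symmetrically, upon replacing $E$ by $C\setminus E$.

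Finally, for (iv), the inclusion $\partial E_0\subset S$ is elementary: a relative boundary point of $E_0$ lies in $\overline{E_0}\setminus E_0\subset E_1\cup S$ by (i) and (ii), and cannot lie in the open set $E_1$ without contradicting the fact that every neighborhood of it must meet $E_0$. The step I expect to be the main obstacle is the reverse inclusion $S\subset\partial E_0$, since it requires converting the purely measure-theoretic information provided by Theorem~\ref{thm:leon rigot lem 42} into topological proximity. For this I will combine (iii) with the Lebesgue-Besicovitch theorem, which gives $\vol{E_0}=\vol{C\setminus E}$ and $\vol{S}=0$, so $E_0$ coincides with $C\setminus E$ up to a null set. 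Then for $x\in S$ and $r\le 1$, the defining bound $h(x,r)>\eps$ implies $\vol{B_C(x,r)\setminus E}>\eps\,\vol{B_C(x,r)}>0$, hence $\vol{B_C(x,r)\cap E_0}>0$, and in particular $B_C(x,r)\cap E_0\neq\emptyset$. Letting $r\to 0$ produces a sequence in $E_0$ converging to $x$, so $x\in\overline{E_0}\setminus E_0=\partial E_0$. The identification $S=\partial E_1$ is entirely symmetric.
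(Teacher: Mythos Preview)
Your proof is correct. The paper does not give its own argument for this proposition; it simply states that the result follows ``in the same way as in Theorem~4.3 of \cite{le-ri}'', and your proof is exactly the natural elaboration of that argument using Theorem~\ref{thm:leon rigot lem 42}, the Ahlfors lower bound of Theorem~\ref{thm:isnqgdbl}, and the Lebesgue--Besicovitch identification $\vol{E(0)}=\vol{C\setminus E}$, $\vol{E(1)}=\vol{E}$ recorded in the preliminaries.
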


As a consequence we get the following two corollaries

\begin{corollary}[Lower density bound]
\label{cor:monotonicity}
Let $C\subset \rr^{n+1}$ be a convex body, and $E\subset C$ an isoperimetric region of volume $v$. Then there exists a constant $M>0$, only depending on $\eps$, on Poincar\'e constant for $r\le 1$, and on an Ahlfors constant $\ell_1$, such that
\begin{equation}
\label{eq:monotonicity}
\pp(E,B_C(x,r))\ge M r^n,
\end{equation}
for all $x\in\ptl E_1$ and $r\le 1$.
\end{corollary}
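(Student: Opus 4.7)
The plan is to exploit the characterization $\ptl E_1 = S$ from the preceding proposition: for $x\in S$ the density function $h(x,\cdot)$ exceeds $\eps$ on the whole interval $(0,1]$, so both $E\cap B_C(x,r)$ and its complement inside $B_C(x,r)$ occupy a non-negligible fraction of the ball. Applying the uniform relative isoperimetric inequality in the metric ball $B_C(x,r)$, provided by Theorem~\ref{thm:isnqgdbl}, to the competitor $E\cap B_C(x,r)$ will then force a lower bound on the $n$-dimensional measure of $\ptl E$ inside this ball of the form $r^n$.

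Concretely, first I would observe that any smooth vector field $\xi$ with compact support in $\intt B_C(x,r)$ is automatically an admissible test field for $P(E,\cdot)$ on $B_C(x,r)$, and that $\int_{E\cap B_C(x,r)}\divv\xi=\int_{E}\divv\xi$ since $\xi$ vanishes outside $B_C(x,r)$. Taking the supremum over such $\xi$ with $|\xi|\le 1$ gives the chain
\[
P(E,B_C(x,r))\,\ge\,P_{B_C(x,r)}\big(E\cap B_C(x,r)\big)\,\ge\,I_{B_C(x,r)}\big(\vol{E\cap B_C(x,r)}\big).
\]
Next, Theorem~\ref{thm:isnqgdbl} applied with $r_0=1$ furnishes a constant $M_0>0$ depending only on $C$ such that
\[
I_{B_C(x,r)}(w)\,\ge\,M_0\,\min\{w,\vol{B_C(x,r)}-w\}^{n/(n+1)},
\]
for every $x\in C$, $r\le 1$ and $0<w<\vol{B_C(x,r)}$, together with the Ahlfors-type lower bound $\vol{B_C(x,r)}\ge \ell_1 r^{n+1}$.

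Plugging in $w=\vol{E\cap B_C(x,r)}$ and using the identity $\min\{w,\vol{B_C(x,r)}-w\}=h(x,r)\,\vol{B_C(x,r)}$ together with $h(x,r)>\eps$ (because $x\in\ptl E_1=S$), we would obtain
\[
P(E,B_C(x,r))\,\ge\,M_0\,\big(\eps\,\ell_1\,r^{n+1}\big)^{n/(n+1)}\,=\,M_0\,(\eps\,\ell_1)^{n/(n+1)}\,r^{n},
\]
which is precisely the claim with $M:=M_0(\eps\,\ell_1)^{n/(n+1)}$; its dependencies ($\eps$, the relative isoperimetric/Poincar\'e constant $M_0$ for balls of radius at most $1$, and the Ahlfors constant $\ell_1$) match those stated.

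The one point requiring a little care is the comparison $P(E,B_C(x,r))\ge P_{B_C(x,r)}(E\cap B_C(x,r))$, which boils down to the support condition on test fields described above; I do not expect this, nor any other step, to present a genuine obstacle, since all the heavy lifting has been done in Theorem~\ref{thm:isnqgdbl} and in the partition proposition identifying $\ptl E_1$ with $S$.
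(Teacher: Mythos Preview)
Your proposal is correct and follows essentially the same approach as the paper's proof. Both arguments use that $x\in\ptl E_1=S$ forces $h(x,r)>\eps$ for all $r\le 1$, then apply the relative isoperimetric inequality of Theorem~\ref{thm:isnqgdbl} together with the Ahlfors lower bound $\vol{B_C(x,r)}\ge\ell_1 r^{n+1}$ to obtain $P(E,B_C(x,r))\ge M_0(\eps\ell_1)^{n/(n+1)}r^n$; your version simply makes explicit the intermediate identification $P(E,B_C(x,r))=P_{B_C(x,r)}(E\cap B_C(x,r))$ via the support condition on test fields, which the paper leaves implicit.
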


\begin{proof}
If $x\in \ptl E_1$, the choice of $\eps$ and 
the relative isoperimetric inequality \eqref{eq:isnqgdbl1} 
give
\begin{equation*}
\begin{split}
\pp(E,B_C(x,r))&\ge M\min\{\vol{E\cap B_C(x,r)}, \vol{B_C(x,r) \setminus E}\}^{n/{(n+1)}}
\\
&=M\,(\vol{B_C(x,r)}\,h(x,r))^{n/(n+1)}\ge M(\vol{B_C(x,r)}\,\eps)^{n/(n+1)}
\\
&\ge M\,(\ell_1\eps)^{n/(n+1)}\,r^n.
\end{split}
\end{equation*}
This implies the desired inequality.
\end{proof}

\begin{remark}
\label{rem:uniformeps}
If $C_i$ is a sequence of convex bodies converging to a convex body $C$ in Hausdorff distance, and $E_i\subset C_i$ is a sequence of isoperimetric regions converging weakly to an isoperimetric region $E\subset C$ of volume $0<v<\vol{C}$, then a constant $M>0$ in \eqref{eq:monotonicity} can be chosen independently of $i\in\nn$. In fact, by \eqref{eq:epsfine}, the constant $\eps$ only depends on $\vol{E_i}$, $\vol{C_i}-\vol{E_i}$, and $I_{C_i}(\vol{E_i})$, which are uniformly bounded since $\vol{C_i}\to\vol{C}$ and $\vol{E_i}\to\vol{E}$. By the convergence in Hausdorff distance of $C_i$ to $C$, both a lower Ahlfors constant $\ell_1$ and a Poincar\'e constant can be chosen uniformly for all $i\in\nn$.
\end{remark}

\begin{remark}
The classical monotonicity formula for rectifiable varifolds \cite{simon} can be applied in the interior of $C$ to get the lower bound \eqref{eq:monotonicity} for small $r$.
Assuming $C^2$ regularity of the boundary of $C$ (convexity is no longer needed), a monotonicity formula for varifolds with free boundary under boundedness condition on the mean curvature have been obtained by Gr\"uter and Jost \cite{MR863638}. This monotonicity formula implies the lower density bound \eqref{eq:monotonicity}. 
\end{remark}

Now we prove that isoperimetric regions also converge in Hausdorff distance to their weak limits, which are also isoperimetric regions. It is necessary to choose a representative of the isoperimetric regions in the class of finite perimeter so that Hausdorff convergence makes sense: we simply consider the closure of the set $E_1$ of points of density one.

\begin{theorem}
\label{thm:EitoE Haus}
Let $\{C_i\}_{i\in\nn}$ be a sequence of convex bodies that converges in Hausdorff distance to a convex body $C$. Let $E_i\subset C_i$ be a sequence of isoperimetric regions of volumes $v_i\to v\in (0,\vol{C})$. Let $f_i:C_i\to C$ be a sequence of bilipschitz maps with $\Lip(f_i), \Lip(f_i^{-1})\to 1$.

Then there is an isoperimetric set $E\subset C$ such that a subsequence of $f_i(E_i)$ converges to $E$ in Hausdorff distance. Moreover, $E_i$ converges to $E$ in Hausdorff distance.
\end{theorem}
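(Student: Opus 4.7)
The plan is to produce $E$ as an $L^1$ subsequential limit of $F_i:=f_i(E_i)$, show it is isoperimetric of volume $v$, upgrade to Hausdorff convergence using the uniform clearing-out supplied by Theorem~\ref{thm:leon rigot lem 42} together with Remark~\ref{rem:uniformeps}, and finally transfer the Hausdorff convergence back to $E_i$ by using that the canonical maps of \eqref{eq:fi} are uniformly close to the identity. The main difficulty will be the upgrade step, because $F_i$ is only the bilipschitz image of an isoperimetric region in $C$, so the clearing-out has to be transported through $f_i$ with controlled bilipschitz distortion.

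By Lemma~\ref{lem:bilip} and $\Lip(f_i),\Lip(f_i^{-1})\to 1$, one has $|F_i|\to v$ and $P_C(F_i)\le \Lip(f_i)^n\,P_{C_i}(E_i)=\Lip(f_i)^n\,I_{C_i}(v_i)$; this is uniformly bounded since $I_{C_i}(v_i)\to I_C(v)$ by Corollary~\ref{cor:I_CivitoI Cv}. Lemma~\ref{lem:fundamental}(ii) then gives a subsequence $F_{i_k}$ converging in $L^1(\intt(C))$ to some $E\subset C$. The $L^1$ limit yields $|E|=v$, and lower semicontinuity of perimeter combined with the same bilipschitz estimate gives $P_C(E)\le\liminf P_C(F_{i_k})\le I_C(v)$, so $E$ is an isoperimetric region.

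For the Hausdorff convergence, replace $E$ and each $F_{i_k}$ by its canonical representative (the closure of its density-one points). Applying Theorem~\ref{thm:leon rigot lem 42} to $E_{i_k}$ in $C_{i_k}$ and invoking Remark~\ref{rem:uniformeps} produces constants $c_0,r_0>0$ independent of $k$ with $|E_{i_k}\cap B_{C_{i_k}}(y,r)|\ge c_0\,r^{n+1}$ for every $y$ in the canonical representative of $E_{i_k}$ and every $r\le r_0$; pushing forward through $f_{i_k}$ and using $\Lip(f_{i_k}),\Lip(f_{i_k}^{-1})\to 1$ yields a comparable uniform density bound $|F_{i_k}\cap B_C(x,r)|\ge c'_0\,r^{n+1}$ for $x$ in the canonical representative of $F_{i_k}$ and $r$ small. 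Theorem~\ref{thm:leon rigot lem 42} applied directly to $E$ in $C$ gives the same estimate at every point of $E$. Now suppose Hausdorff convergence fails. Passing to subsequences, this is witnessed by a gap $\eta>0$ and either (i) points $x_k\in F_{i_k}$ with $\dist(x_k,E)\ge\eta$ or (ii) points $y_k\in E$ with $\dist(y_k,F_{i_k})\ge\eta$. In case (i), extract $x_k\to x$; for small $r$ and $k$ large $B_C(x_k,r)\subset B_C(x,2r)$, so the density bound gives $|F_{i_k}\cap B_C(x,2r)|\ge c'_0\,r^{n+1}$, and the $L^1$ limit gives $|E\cap B_C(x,2r)|\ge c'_0\,r^{n+1}$, contradicting $B_C(x,2r)\cap E=\emptyset$ for $2r<\eta$. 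Case (ii) is symmetric: extract $y_k\to y\in E$, note that $B_C(y,\eta/2)\cap F_{i_k}=\emptyset$ for $k$ large, pass to the $L^1$ limit to obtain $|E\cap B_C(y,\eta/2)|=0$, and contradict the density lower bound at $y\in E$.

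Finally, taking for $f_i$ the explicit maps \eqref{eq:fi} of Theorem~\ref{thm:lipschitz}, the uniform convergence $\rho_i\to\rho$ (Lemma~\ref{lem:radialconv}) combined with formula \eqref{eq:fi2} yields $\sup_{y\in C_i}|f_i(y)-y|\to 0$. Hence $\de(E_{i_k},F_{i_k})\to 0$ in ambient Hausdorff distance, and the triangle inequality with $F_{i_k}\to E$ in Hausdorff distance delivers $E_{i_k}\to E$ in Hausdorff distance, as required.
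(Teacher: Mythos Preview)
Your proof is correct and follows essentially the same route as the paper's: extract an $L^1$-subsequential limit $E$ of $f_i(E_i)$, verify it is isoperimetric via Corollary~\ref{cor:I_CivitoI Cv}, upgrade $L^1$ to Hausdorff convergence using the uniform clearing-out of Theorem~\ref{thm:leon rigot lem 42} and Remark~\ref{rem:uniformeps}, and then use the explicit form~\eqref{eq:fi2} together with Lemma~\ref{lem:radialconv} to pass from $f_i(E_i)$ back to $E_i$. The only difference is cosmetic: the paper applies the clearing-out conclusion directly to $f_i(E_i)\subset C$ and contradicts it via the quantitative choice $|f_i(E_i)\triangle E|<r_i^{n+2}$, whereas you first extract uniform lower volume-density bounds for $E_{i_k}$ in $C_{i_k}$, push them forward through the bilipschitz maps (a step you actually make more explicit than the paper does), and reach the contradiction by a compactness argument.
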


\begin{proof}
The sequence $\{f_i(E_i)\}_{i\in\nn}$ has uniformly bounded perimeter and so a subsequence, denoted in the same way, converges in $L^1(C)$ to a finite perimeter set $E$, which has volume $v$. The set $E$ is isoperimetric in $C$ since the sets $E_i$ are isoperimetric in $C_i$ and $I_{C_i}(v_i)\to I_C(v)$ by Corollary~\ref{cor:I_CivitoI Cv}.

By Remark~\ref{rem:uniformeps}, we can choose $\eps>0$ so that Theorem~\ref{thm:leon rigot lem 42} holds with this $\eps$ for all $i\in\nn$. Choosing a smaller $\eps$ if necessary we get that, for any $x\in C$ and $0<r\le 1$, whenever $h(f_i(E_i),C,x,r)\le\eps$, we get $h(f_i(E_i),C,x,r/2)=0$.

We now prove that $f_i(E_i)\to E$ in Hausdorff distance. As  $\chi_{f_i(E_i)}\to \chi_E$ in $L^1(C)$, we can choose a sequence $r_i\to 0$ so that 
\begin{equation}
\label{eq:EitoE Haus3}
\vol{f_i(E_i)\,\triangle\, E}<r_i^{n+2}.
\end{equation}
Now fix some $0<r<1$ and assume that, for some subsequence, there exist $x_i\in f_i(E_i)\setminus E_{r}$, where $E_r=\{x\in C: d(x,E)\le r\}$. Choose $i$ large enough so that $r_i<\min\{\tfrac{\ell_1}{2},r\}$. Then, by \eqref{eq:EitoE Haus3},
\begin{equation}
\label{eq:EitoE Haus4}
|f_i(E_i)\cap B_C(x_i,r_i)|\le |f_i(E_i)\setminus E|\le |f_i(E_i)\triangle E|< r_i^{n+2}<\frac{\ell_1r_i^{n+1}}{2}\le \frac{|B_C(x_i,r_i)|}{2}.
\end{equation}
So, for $i$ large enough, we get
\[
h(f_i(E_i),C,x_i,r_i)=\frac{|f_i(E_i)\cap B_C(x_i,r_i)|}{|B_C(x_i,r_i)|}<\ell_1^{-1}r_i\le \eps.
\]
By Theorem~\ref{thm:leon rigot lem 42}, we conclude that $\vol{f_i(E_i)\cap B_{C}(x,r_i/2)}=0$. The normalization condition imposed on the isoperimetric regions implies a contradiction that shows that $f_i(E_i)\subset (E)_{r}$ for $i$ large enough. In a similar way we get that $E\subset f_i(E_i)_r$, which proves that the Hausdorff distance between $E$ and $f_i(E_i)$ is less than an arbitrary $r>0$. So $f_i(E_i)\to E$ in Hausdorff distance.

Now we prove $\de(E_i,E)\to 0$. By the triangle inequality we have
\[
\de(E_i,E)\le\de(f_i(E_i),E)+\de(f_i(E_i),E_i).
\]
It only remains to show that $\de(f_i(E_i),E_i)\to 0$. For $x\in E_i$ we have
\[
\dist(f_i(x),E_i)\le |f_i(x)-x|.
\]
Assume  that $r>0$ is as in definition \eqref{eq:fi} of $f_i$. Recall that $B(0,2r)\subset C_i\cap C$ and that $C_i\cup C \subset B(0,R)$. Then by \eqref{eq:fi2} we get $|f_i(x)-x|=0$ if $|x|\le r$ and
\[
|f_i(x)-x|\le \frac{(R-r)}{r}\, \bigg|\, \rho_i\big(\frac{x}{|x|}\big)-\rho\big(\frac{x}{|x|}\big)\, \bigg|
\]
if  $|x|\ge r$. Lemma~\ref{lem:radialconv} then implies the existence of a sequence of positive real numbers $\eps_i\to 0$ such that $|f_i(x)-x|\le \eps_i$ for all $x\in E_i$. We conclude
that
\begin{equation*}
\label{eq:EitoEHaus.1}
f_i(E_i)\subset (E_i)_{\eps_i}.
\end{equation*}
Writing $E_i=f_i^{-1}(f_i(E_i))$ and reasoning as above with $f_i^{-1}$ instead of $f_i$ we obtain 
\begin{equation*}
\label{eq:EitoEHaus.2}
E_i\subset (f_i(E_i))_{\eps_i},
\end{equation*}
By the definition of the Hausdorff distance $\de$, we get $\de(f_i(E_i),E_i)\to 0$.
\end{proof}

Recall that in Theorem~\ref{thm:sz-improved} we showed that the boundaries of isoperimetric regions in convex sets with $C^{2,\alpha}$ boundary are connected. For arbitrary convex sets we have the following

\begin{theorem}
\label{thm: ptl conect}
Let $C\subset\rr^{n+1}$ be a  convex body. For every volume $0<v<\vol{C}$ there exists an isoperimetric region in $C$ of volume $v$ with connected boundary.
\end{theorem}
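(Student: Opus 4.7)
The plan is to approximate $C$ by convex bodies with $C^{2,\alpha}$ boundary, apply Theorem~\ref{thm:sz-improved} in the smooth setting, and pass to the Hausdorff limit using the convergence results of the previous sections, together with the classical fact that a Hausdorff limit of nonempty connected compacta contained in a fixed ball is connected.

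First I would fix a sequence $\{C_i\}_{i\in\nn}$ of convex bodies with $C^{2,\alpha}$ boundary converging to $C$ in Hausdorff distance; such a sequence exists by standard smoothing, for instance by Minkowski summation of $C$ with shrinking balls followed by a regularization of the support function. Since $\vol{C_i}\to\vol{C}$ by \cite[Thm.~1.8.16]{sch} and $v<\vol{C}$, for all $i$ sufficiently large we have $v<\vol{C_i}$, and Lemma~\ref{lem:fundamental} provides an isoperimetric region $E_i\subset C_i$ of volume $v$. By Theorem~\ref{thm:sz-improved}, the closure $\overline{\ptl E_i\cap\intt(C_i)}$ is connected for each such $i$.

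Next, taking the bilipschitz maps provided by Theorem~\ref{thm:lipschitz}, Theorem~\ref{thm:EitoE Haus} allows me to extract a subsequence such that $E_i$ converges to an isoperimetric region $E\subset C$ of volume $v$ in Hausdorff distance. By Theorem~\ref{thm:ptl haus}, the free boundaries $\overline{\ptl E_i\cap\intt(C_i)}$ converge in Hausdorff distance to $\overline{\ptl E\cap\intt(C)}$. All these free boundaries lie in a fixed Euclidean ball eventually containing every $C_i$, and the limit is nonempty since $\h^n(\overline{\ptl E\cap\intt(C)})\ge P_C(E)=I_C(v)>0$.

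Finally I would invoke the classical topological fact that if a sequence of nonempty connected compact subsets of a fixed compact metric space converges in Hausdorff distance to a nonempty compact set, then the limit is connected. Applied to $\overline{\ptl E_i\cap\intt(C_i)}\to\overline{\ptl E\cap\intt(C)}$, this yields the desired isoperimetric region $E$ with connected free boundary. The main obstacle in this outline is the legitimate appeal to Theorem~\ref{thm:ptl haus}, which asserts Hausdorff convergence of the free boundaries; once that is granted, both the smoothing step and the topological passage to the limit are routine.
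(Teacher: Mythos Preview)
Your proposal is correct and follows essentially the same approach as the paper: approximate $C$ by $C^{2,\alpha}$ convex bodies, take isoperimetric regions there with connected free boundaries (Theorem~\ref{thm:sz-improved}), pass to a Hausdorff limit of the regions and of their free boundaries via Theorems~\ref{thm:EitoE Haus} and~\ref{thm:ptl haus}, and conclude by the stability of connectedness under Hausdorff convergence. The paper cites \cite{kr-pa} for that last topological fact, while you invoke it as classical; your additional check that the limit free boundary is nonempty is a nice touch the paper leaves implicit.
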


We shall use the following result in the proof of Theorem~\ref{thm: ptl conect}.

\begin{theorem}
\label{thm:ptl haus}
Let $\{C_i\}_{i\in\nn}$ a sequence of convex bodies converging in Hausdorff distance to a convex body $C$, and let $E_i\subset C_i$ be a sequence of isoperimetric regions converging in Hausdorff distance to an isoperimetric region $E\subset C$.

Then a subsequence of $\cl{\ptl E_i\cap\intt(C_i)}$ converges to $\cl{\ptl E\cap\intt(C)}$ in Hausdorff distance as well.
\end{theorem}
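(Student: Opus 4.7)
The plan is to characterize the Hausdorff limit of the relative boundaries. Write $A_i:=\cl{\ptl E_i\cap\intt(C_i)}$ and $A:=\cl{\ptl E\cap\intt(C)}$. Since all $C_i$ lie in a fixed compact ball of $\rr^{n+1}$, each $A_i$ is compact, and by the standard compactness of the Hausdorff topology on closed subsets of a compact set, a subsequence (not relabeled) satisfies $A_i\to A^*$ in Hausdorff distance for some compact set $A^*$. It suffices to prove $A^*=A$.

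For the inclusion $A^*\subset A$, let $x\in A^*$ and pick $x_i\in A_i$ with $x_i\to x$. A diagonal argument lets me replace $x_i$ by a point of $\ptl E_i\cap\intt(C_i)$ without changing the limit, so assume $x_i\in\ptl E_i\cap\intt(C_i)$. Remark~\ref{rem:uniformeps} furnishes a single $\eps>0$ for which Theorem~\ref{thm:leon rigot lem 42} applies uniformly and, combined with the uniform lower bound \eqref{eq:isnqgdbl1a}, yields for every $i$ and every $0<r\le 1$
\[
\min\big\{|E_i\cap B_{C_i}(x_i,r)|,\,|B_{C_i}(x_i,r)\setminus E_i|\big\}\ge \eps\,|B_{C_i}(x_i,r)|\ge \eps\ell_1\, r^{n+1}.
\]
Let $f_i:C_i\to C$ be the bilipschitz maps supplied by Theorem~\ref{thm:EitoE Haus}, with $\Lip(f_i),\Lip(f_i^{-1})\to 1$, $f_i(E_i)\to E$ in $L^1(C)$, and $|f_i(y)-y|\le\eps_i\to 0$ uniformly on $C_i$ (as established in the final part of that proof). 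For any $\eta>0$ and $i$ large enough, $f_i(B_{C_i}(x_i,r))\subset B_C(x,r+\eta)$, and Lemma~\ref{lem:bilip} gives
\[
|f_i(E_i)\cap B_C(x,r+\eta)|\ge\Lip(f_i^{-1})^{-(n+1)}\,\eps\ell_1\, r^{n+1}.
\]
Passing to the limit via $L^1$ convergence produces $|E\cap B_C(x,r+\eta)|\ge\eps\ell_1\, r^{n+1}$; the same argument for the complement gives $|B_C(x,r+\eta)\setminus E|\ge\eps\ell_1\, r^{n+1}$. Letting $\eta\downarrow 0$ along continuity points of these monotone quantities, both $|E\cap B_C(x,\rho)|$ and $|B_C(x,\rho)\setminus E|$ are strictly positive for every small $\rho$, so $x\in S$.

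To upgrade $x\in S$ to $x\in A$, suppose that some open ball $B(x,\rho)$ misses $\ptl E\cap\intt(C)$. The set $B(x,\rho)\cap\intt(C)$ is convex, hence connected, so the absence of boundary points of $E$ forces it to lie, up to a null set, entirely in $E$ or entirely in $C\setminus E$; since $|\ptl C|=0$ the same dichotomy holds for $B_C(x,\rho)$, contradicting the lower density bounds just obtained. Thus $x\in A$.

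For the reverse inclusion $A\subset A^*$, take $x\in A$ and by approximation assume $x\in\ptl E\cap\intt(C)$, so that $x\in S$ and both $|E\cap B_C(x,\rho)|$ and $|B_C(x,\rho)\setminus E|$ are at least $\eps\ell_1\rho^{n+1}$ for small $\rho$. If $\dist(x,A_i)\not\to 0$, extract a subsequence with $\dist(x,A_i)\ge\rho>0$; since $x\in\intt(C)$ and $C_i\to C$ in Hausdorff distance, I may shrink $\rho$ so that $B(x,\rho)\subset\intt(C_i)$ for every sufficiently large $i$. Then $B(x,\rho)\cap\ptl E_i=\emptyset$, and by connectedness $B(x,\rho)$ is contained up to a null set in either $(E_i)_0$ or $(E_i)_1$. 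Refining the subsequence, assume $|B(x,\rho)\cap E_i|=0$ for all $i$. Transferring through $f_i$ exactly as in the first step the limit satisfies $|B(x,\rho)\cap E|=0$, contradicting $|B(x,\rho)\cap E|=|B_C(x,\rho)\cap E|\ge\eps\ell_1\rho^{n+1}>0$. Hence $x\in A^*$.

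The principal obstacle is the passage to the limit in the density estimates across the varying ambient bodies $C_i\to C$: the inequalities given by Theorem~\ref{thm:leon rigot lem 42} live in $B_{C_i}(x_i,r)$ but must be used in $B_C(x,r)$. This is overcome by the uniform closeness of $f_i$ to the identity together with the uniform Leonardi--Rigot and Ahlfors constants from Remark~\ref{rem:uniformeps}; the rest of the argument is a routine combination of connectedness of metric balls inside convex bodies with the partition $C=E_0\cup E_1\cup S$.
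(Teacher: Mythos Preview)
Your argument is correct and runs along the same lines as the paper's proof: both establish the two Kuratowski inclusions by exploiting the density dichotomy of Theorem~\ref{thm:leon rigot lem 42} (a boundary point has $h(x,r)>\eps$ for all small $r$, while away from the boundary one alternative vanishes). The paper phrases this via the contrapositive (``small density clears out'') and you via the direct lower density bound; these are equivalent uses of the same theorem.

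The one substantive difference is in how the density information is transferred between the varying bodies $C_i$ and the limit $C$. You route everything through the bilipschitz maps $f_i$ and the $L^1$-convergence $f_i(E_i)\to E$, which requires re-invoking the machinery of Theorem~\ref{thm:EitoE Haus} (and, strictly speaking, passing to a further subsequence to secure $L^1$-convergence to the \emph{given} $E$, since the hypothesis only supplies Hausdorff convergence). The paper instead uses the hypothesis $E_i\to E$ in Hausdorff distance directly: from $\clb(x_i,r)\cap(C_i\setminus E_i)\to\clb(x,r)\cap(C\setminus E)$ in Hausdorff distance one gets $\limsup_i\vol{B(x_i,r)\cap(C_i\setminus E_i)}\le\vol{B(x,r)\cap(C\setminus E)}$ by upper semicontinuity of volume, and then Theorem~\ref{thm:leon rigot lem 42} finishes. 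This is shorter and avoids the bilipschitz detour, but your route works just as well; the extra step buys you nothing here beyond a self-contained proof that does not quote the volume semicontinuity lemma.
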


\begin{proof}[Proof of Theorem~\ref{thm: ptl conect}]
Let $C_i\subset\rr^{n+1}$ be convex bodies with $C^{2,\alpha}$ boundary converging to $C$ in Hausdorff distance. Let $E_i\subset C_i$ be isoperimetric regions of volumes approaching $v$. By Theorem~\ref{thm:EitoE Haus}, a subsequence of the sets $E_i$ converges to $E$ in Hausdorff distance, where $E\subset C$ is an isoperimetric region of volume $v$. By Theorem~\ref{thm:ptl haus}, a subsequence of the sets $\cl{\ptl E_i\cap\intt(C_i)}$ converges to $\cl{\ptl E\cap\intt(C)}$ in Hausdorff distance. Theorem~\ref{thm:sz-improved} implies that the sets $\cl{\ptl E_i\cap\intt(C_i)}$ are connected. By Proposition A.1.7 in \cite{kr-pa}, $\cl{\ptl E\cap\intt(C)}$ is connected as well.
\end{proof} 

\begin{proof}[Proof of Theorem~\ref{thm:ptl haus}]
We shall prove that that the sequence $\{\cl{\ptl E_i\cap\intt(C_i)}\}_{i\in\nn}$ converges to $\cl{\ptl E\cap\intt(C)}$ in Kuratowski sense \cite[4.4.13]{amb}
\begin{enumerate}
\item[1.] If $x=\lim_{j\to\infty} x_{i_j}$ for some subsequence $x_{i_j}\in \cl{\ptl E_{i_j}\cap\intt(C_i)}$, then $x\in\cl{\ptl E\cap\intt(C)}$, and
\item[2.] If $x\in \cl{\ptl E\cap\intt(C)}$, then there exists a sequence $x_i\in\cl{\ptl E\cap\intt(C)}$ converging to $x$.
\end{enumerate}
Assume 1 does not hold. To simplify the notation we shall assume that $x=\lim_{i\to\infty} x_i$, with $x_i\in\cl{\ptl E_i\cap \intt(C_i)}$. If $x\not\in\cl{\ptl E\cap\intt(C)}$ we had $x\in \intt(E)\cup\intt(C\setminus E)$. If $x\in\intt(E)$, then there exists $r>0$ such that $\vol {B(x,r)\cap (C\setminus E)}=0$. Since $x_i\to x$, and $E_i$, $C_i$ converge to $E,C$ in Hausdorff sense, respectively, we conclude by \cite[ Proposition 4.4.14]{amb} that $\clb(x_i,r)\cap (C_i\setminus E_i)\to \clb(x,r)\cap (C\setminus E)$ in the Hausdorff sense as well. Thus by \cite[Lemma III.1.1]{ch} we get
\[
\limsup_{i\to\infty} \vol {B(x_i,r)\cap (C_i\setminus E_i)}\le\vol { B(x,r)\cap (C\setminus E)}=0.
\]
Now if $\eps>0$ is as in Theorem~\ref{thm:leon rigot lem 42}, we get  $\vol {B(x_i,r)\cap (C_i\setminus E_i)}\le\eps$ for all large $i\in \nn$ which implies $\vol {B(x_i,r/2)\cap (C_i\setminus E_i)}=0$. This contradicts the fact that $x_i\in \cl{\ptl E_i\cap\intt(C_i)}$. Assuming $x\in C\setminus E$ and arguing similarly we would find $\vol {B(x_i,r/2)\cap \intt(E_i)}=0$. Thus $x\in \cl{\ptl E\cap\intt(C)}$.

Assume now that $2$ does not hold. Then there exists $x\in\cl{\ptl E\cap\intt(C)}$ so that no sequence in $\cl{\ptl E_i\cap\intt(C_i)}$ converges to $x$. We may assume that, passing to a subsequence if necessary, that there exists $\eta>0$ so that $B_C(x,\eta)$ does not contain any point in $\cl{\ptl E_i\cap\intt(C_i)}$. The radius $\eta$ can be chosen less than $\eps$. Reasoning as in Case 1, we conclude that either $B_C(x,\eta/2)\cap E_i=\emptyset$ or $B_C(x,\eta/2)\cap (C\setminus E_i)=\emptyset$.
\end{proof}

\section{The asymptotic isoperimetric profile of a convex body}

In this section we shall prove that isoperimetric regions of small volume inside a convex body concentrate near boundary points whose tangent cone has the smallest possible solid angle. This will be proven by rescaling the isoperimetric regions and then studying their convergence, as in Morgan and Johnson \cite{MR1803220}. We shall recall first some results on convex cones.

Let $K\subset \rr^{n+1}$ be a closed convex cone with vertex $p$ . Let $\alpha(K)=\hh^n(\ptl{B}(p,1)\cap \intt(K))$ be the \emph{solid angle} of $K$. It is known that the geodesic balls centered at the vertex are isoperimetric regions in $K$, \cite{lions-pacella}, \cite{r-r}, and that they are the only ones \cite{FI} for general convex cones, without any regularity assumption on the boundary. The isoperimetric profile of $K$ is given by
\begin{equation}
\label{eq:isopsolang}
I_K(v)={\alpha(K)}^{1/(n+1)}\,(n+1)^{n/(n+1)}v^{n/(n+1)}.
\end{equation}
Consequently the isoperimetric profile of a convex cone is completely determinated by its solid angle.

We define the tangent cone $C_{p}$ of a convex body $C$ at a given boundary point $p\in\ptl C$ as the closure of the set
\[
\bigcup_{\la > 0} h_{p,\la} (C),
\]
where $ h_{p,\la}$ denotes the dilation of center $p$ and factor $\la$. The solid angle $\alpha(C_p)$ of $C_p$ will be denoted by $\alpha(p)$. Tangent cones to convex bodies have been widely considered in convex geometry under the name of supporting cones \cite[\S~2.2]{sch} or projection cones \cite{MR920366}. In the following result, we prove the lower semicontinuity of the solid angle of tangent cones in convex sets.

\begin{lemma}
\label{lem:losemcontangcon}
Let $C\subset \rr^{n+1}$ be a convex body, $\{p_i\}_{i\in\nn}\subset \ptl C$ so that $p=\lim_{i\to\infty} p_i$. Then
\begin{equation}
\label{eq:liminfalpha}
\alpha(p) \le \liminf_{i\to\infty}\alpha({p_i}).
\end{equation}
In particular, this implies the existence of points in $\ptl C$ whose tangent cones are minima of the solid angle function.
\end{lemma}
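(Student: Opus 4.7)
The approach is to reformulate $\alpha(p)$ as the $\hh^n$-measure of a set of ``interior directions'' at $p$ and to show that this set is lower semicontinuous in the set-theoretic liminf sense, from which \eqref{eq:liminfalpha} follows by applying Fatou's lemma to characteristic functions.

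Concretely, I would introduce
\[
A(p):=\{u\in\esf^n: p+tu\in\intt(C)\ \text{for some}\ t>0\}
\]
and first verify that $\alpha(p)=\hh^n(A(p))$. The inclusion showing every $u\in A(p)$ lies in $\intt(C_p)-p$ is immediate: if $\clb(p+tu,t\de)\subset C$ then $\clb(u,\de)\subset(1/t)(C-p)\subset C_p-p$. The reverse inclusion, which is the delicate point, uses that $C_p-p=\overline{D}$ with $D:=\bigcup_{\la>0}\la(C-p)$ convex with non-empty interior, so $\intt(\overline{D})=\intt(D)$; combined with the continuity of the Minkowski gauge of $C-p$ on the interior of its effective domain $D$, one finds, given $u\in\intt(D)$, a single $\la>0$ with $\clb(u,\eps)\subset\la(C-p)$, and hence $p+(1/\la)u\in\intt(C)$.

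The perturbation step is then straightforward. If $u\in A(p)$, choose $t,r>0$ with $\clb(p+tu,r)\subset C$; then for $i$ large enough $|p_i-p|<r/2$, so $\clb(p_i+tu,r/2)\subset\clb(p+tu,r)\subset C$, giving $u\in A(p_i)$. Thus $A(p)\subset\liminf_{i\to\infty}A(p_i)$, and Fatou's lemma yields
\[
\alpha(p)=\hh^n(A(p))\le\hh^n\bigl(\liminf_{i\to\infty}A(p_i)\bigr)\le\liminf_{i\to\infty}\hh^n(A(p_i))=\liminf_{i\to\infty}\alpha(p_i).
\]
The ``in particular'' statement is then immediate: $p\mapsto\alpha(p)$ is lower semicontinuous on the compact set $\ptl C$ and therefore attains its infimum there.

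The main obstacle is the identification $\alpha(p)=\hh^n(A(p))$. A priori $\alpha(p)$ is defined through the interior of the tangent cone $C_p$, while $A(p)$ demands that the open ray $p+(0,\infty)u$ actually enters $\intt(C)$; matching the two notions requires the Minkowski-gauge continuity argument sketched above. Once this equality is in hand, the remaining steps amount to a one-line ball perturbation together with Fatou's lemma.
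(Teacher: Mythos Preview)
Your argument is correct and takes a genuinely different route from the paper's.

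The paper proceeds via Hausdorff compactness: after passing to a subsequence along which $\alpha(p_i)$ converges to the $\liminf$, it applies Blaschke selection to the truncated tangent cones $C_{p_i}\cap\clb(p_i,1)$ to obtain a Hausdorff limit $C_\infty\subset\clb(p,1)$, checks that $C_\infty=K_\infty\cap\clb(p,1)$ for a closed convex cone $K_\infty$ with $C_p\subset K_\infty$, and concludes via continuity of volume with respect to Hausdorff distance. Your argument, by contrast, bypasses both Blaschke selection and Hausdorff-distance continuity: once you establish the identification $A(p)=\esf^n\cap(\intt(C_p)-p)$ (your Minkowski-gauge step handles the only nontrivial inclusion), the pointwise inequality $\chi_{A(p)}\le\liminf_i\chi_{A(p_i)}$ follows from a one-line ball perturbation, and Fatou on the finite-measure space $\esf^n$ finishes. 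Your approach is more elementary and self-contained; the paper's is more geometric and fits naturally with the Hausdorff-distance machinery used throughout the article.
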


\begin{proof}
We may assume that $\alpha({p_i})$ converges to $\liminf_{i\to\infty}\alpha({p_i})$ passing to a subsequence if necessary. Since the sequence $C_{p_i}\cap\clb(p_i,1)$ is bounded for the Hausdorff distance, we can extract a subsequence (denoted in the same way) converging to a convex body $C_\infty\subset\clb(p,1)$. It is easy to check that $C_\infty$ is the intersection of a closed convex cone $K_\infty$ of vertex $p$ with $\clb(p,1)$, and that $C_p\subset K_\infty$. By the continuity of the volume with respect to the Hausdorff distance we have
\[
\alpha(p)=\vol{C_p\cap\clb(p,1)}\le\vol{C_\infty}=\lim_{i\to\infty} \vol{C_{p_i}\cap\clb(p_i,1)}=\lim_{i\to\infty}\alpha({p_i}),
\]
yielding \eqref{eq:liminfalpha}. To prove the existence of tangent cones with the smallest solid angle, we simply take a sequence $\{p_i\}_{i\in\nn}$ of points at the boundary of $C$ so that $\alpha(p_i)$ converges to $\inf\{\alpha(p):p\in\ptl C\}$, we extract a convergent subsequence, and we apply the lower semicontinuity of the solid angle function.

\end{proof}

The isoperimetric profiles of tangent cones which are minima of the solid angle function coincide. The common profile will be denoted by $I_{C_{\min}}$.

\begin{proposition}
\label{prp:ICleICmin}
Let $C\subset\rr^{n+1}$ be a convex body. Then
\begin{equation}
\label{eq:ICleICmin}
I_C(v)\le I_{C_{\min}}(v),
\end{equation}
for all $0\le v\le\vol{C}$.
\end{proposition}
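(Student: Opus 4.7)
The plan is to reduce the global inequality to the small-volume asymptotic bound $\limsup_{v \to 0^+} I_C(v)/v^{n/(n+1)} \le \alpha_{\min}^{1/(n+1)}(n+1)^{n/(n+1)}$ and then to globalize it using the concavity of $Y_C=I_C^{(n+1)/n}$ established in Corollary~\ref{cor:concavidad de perfil}.

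For the asymptotic bound, let $p \in \ptl C$ attain $\alpha_{\min}$ (which exists by Lemma~\ref{lem:losemcontangcon}) and take as competitor the spherical ball $F_\lambda := B(p,\lambda) \cap C$ for small $\lambda > 0$. Its reduced boundary inside $\intt(C)$ is just the spherical cap $\ptl B(p,\lambda) \cap \intt(C)$, so $P_C(F_\lambda) = \hh^n(\ptl B(p,\lambda)\cap\intt(C))$. Writing $C^\lambda := (C-p)/\lambda$, the scaling identities
\[
\vol{F_\lambda} = \lambda^{n+1}\vol{B(0,1)\cap C^\lambda}, \qquad P_C(F_\lambda) = \lambda^n\hh^n(\ptl B(0,1)\cap\intt(C^\lambda))
\]
reduce everything to the behavior of $C^\lambda$ as $\lambda \to 0^+$. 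Since $p \in C$ and $C$ is convex, a direct check shows that the family $\{C^\lambda\}_{\lambda>0}$ is nested, increasing as $\lambda \downarrow 0$, and $\bigcup_\lambda C^\lambda$ has closure $C_p-p$. Monotone convergence then yields $\vol{B(0,1) \cap C^\lambda} \to \alpha_{\min}/(n+1)$ and, using that $\ptl(C_p-p) \cap \ptl B(0,1)$ has $\hh^n$-measure zero on the sphere, $\hh^n(\ptl B(0,1)\cap\intt(C^\lambda))\to \alpha_{\min}$. Therefore $\lim_{\lambda\to 0^+} P_C(F_\lambda)/\vol{F_\lambda}^{n/(n+1)} = \alpha_{\min}^{1/(n+1)}(n+1)^{n/(n+1)}$, and $I_C(\vol{F_\lambda})\le P_C(F_\lambda)$ gives $\limsup_{v\to 0^+} Y_C(v)/v \le \alpha_{\min}^{1/n}(n+1)$.

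The globalization is elementary: concavity of $Y_C$ together with $Y_C(0)=0$ forces $v \mapsto Y_C(v)/v$ to be non-increasing on $(0,\vol{C}]$, since for $0<v_1<v_2$ the convex combination $v_1 = (v_1/v_2)\,v_2 + (1-v_1/v_2)\cdot 0$ gives $Y_C(v_1) \ge (v_1/v_2)\,Y_C(v_2)$ by concavity. Hence for every $v \in (0,\vol{C}]$,
\[
\frac{Y_C(v)}{v} \le \lim_{w\to 0^+}\frac{Y_C(w)}{w}\le \alpha_{\min}^{1/n}(n+1) = \frac{Y_{C_{\min}}(v)}{v},
\]
and raising to the power $n/(n+1)$ produces the required inequality $I_C(v) \le I_{C_{\min}}(v)$. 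The main technical point is the monotone convergence of $C^\lambda$ to $C_p - p$ and the consequent continuity of the surface measure on $\ptl B(0,1)$; both are routine consequences of the convexity and the nestedness of the family.
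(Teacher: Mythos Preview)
Your proof is correct, but it takes a genuinely different route from the paper's. You argue in two steps: first you establish the asymptotic bound $\limsup_{v\to 0^+} Y_C(v)/v \le \alpha_{\min}^{1/n}(n+1)$ via the competitors $F_\lambda=B(p,\lambda)\cap C$ and a monotone-convergence argument for the rescalings $C^\lambda\to C_p-p$; then you invoke the concavity of $Y_C$ (Corollary~\ref{cor:concavidad de perfil}) to globalize, using that $v\mapsto Y_C(v)/v$ is non-increasing. The paper instead gives a direct argument valid for \emph{every} fixed $v$, with no limits and no appeal to concavity: it takes $r$ with $\vol{B_C(p,r)}=v$, observes that the spherical cap $S=\ptl B(p,r)\cap\intt(C)$ is exactly the boundary of the geodesic ball of radius $r$ in the subcone $K_p\subset C_p$ it subtends, and then uses $\alpha(K_p)\le\alpha(C_p)=\alpha_{\min}$ together with formula~\eqref{eq:isopsolang} to bound $\hh^n(S)\le I_{C_{\min}}(v_0)\le I_{C_{\min}}(v)$, where $v_0=\vol{B(p,r)\cap K_p}\le v$ by convexity. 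What your approach buys is a clean reduction to the $v\to 0$ behavior, which foreshadows Theorem~\ref{thm:optinsmalvol}; what the paper's approach buys is self-containment (no dependence on Corollary~\ref{cor:concavidad de perfil}) and the avoidance of the limit passage on the sphere. Both are valid; the paper's is shorter and uses less machinery.
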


\begin{remark}
\label{rem:half-plane}
A closed half-space $H\subset\rr^{n+1}$ is a convex cone with the largest possible solid angle. Hence, for any convex body $C\subset\rr^{n+1}$, we have
\[
I_C(v)\le I_H(v),
\] 
for all $0\le v\le\vol{C}$.
\end{remark}

\begin{remark}
Proposition~\ref{prp:ICleICmin} gives an alternative proof of the fact that $E\cap\ptl C\neq\emptyset$ when $E\subset C$ is isoperimetric since, in case $E\cap\ptl C$ is empty, then $E$ is an Euclidean ball.
\end{remark}

\begin{proof}[Proof of Proposition~\ref{prp:ICleICmin}]
Fix some volume $0<v<\vol{C}$. Let $p\in\ptl C$ such that $I_{C_p}=I_{C_{\min}}$. Let $r>0$ such that $\vol{B_C(p,r)}=v$. The closure of the set $\ptl B(p,r)\cap \intt(C)$ is a geodesic sphere of the closed cone $K_p$ of vertex $p$ subtended by the closure of $\ptl B(p,r)\cap \intt(C)$. If $S= \ptl B(p,r)\cap \intt(C) $ then $S= \ptl B(p,r)\cap \intt(K_p)$ as well. By the convexity of $C$, $B(p,r)\cap \intt(K_p) \subset B(p,r)\cap \intt(C)$ and so $v_0=H^{n+1}(B(p,r)\cap\intt(K_p))\le v$. Since $K_p\subset C_p$, \eqref{eq:isopsolang} implies $H^n(S)\le I_{C_{\min}}(v_0)$. So we have
\[
I_C(v) \le \pp_C(B_C(p,r))=\h^n(S)\le I_{C_{\min}}(v_0) \le I_{C_{\min}}(v),
\]
as $I_{C_{\min}}$ is an increasing function. This proves \eqref{eq:ICleICmin}.
\end{proof}

We now prove the following result which strongly depends on the paper by Figalli and Indrei \cite{FI}.

\begin{lemma}
\label{lem:mincone}
Let $K\subset\rr^{n+1}$ be a closed convex cone. Consider a sequence of sets $E_i$ of finite perimeter in $\intt(K)$ such that $v_i=\vol{E_i}\to v$. Then
\begin{equation}
\label{eq:mincone}
\liminf_{i\to\infty}\pp_K(E_i)\ge I_K(v).
\end{equation}
If equality holds, then there is a family of vectors $x_i$ such that $x_i+K\subset K$, and $x_i+E_i$ converges to  a geodesic ball centered at $0$ of volume $v$.
\end{lemma}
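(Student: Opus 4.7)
For the inequality \eqref{eq:mincone} it is enough to observe that, by the very definition of the isoperimetric profile, $P_K(E_i) \ge I_K(v_i)$ for every $i$, and that the explicit formula \eqref{eq:isopsolang} shows $I_K$ to be continuous; hence $I_K(v_i) \to I_K(v)$, and passing to the liminf yields \eqref{eq:mincone}. For the equality case my plan is a concentration-compactness argument on $\{\chi_{E_i}\}$, combined with the characterization of isoperimetric regions in convex cones of Figalli and Indrei \cite{FI}.

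Along a subsequence I may assume $P_K(E_i) \to I_K(v)$. The uniform bounds on $|E_i|$ and $P_K(E_i)$ place $\{\chi_{E_i}\}$ into one of the three standard concentration-compactness alternatives. Vanishing, meaning $\sup_{x}|E_i \cap B(x,R)| \to 0$ for every $R>0$, is ruled out by a bounded-overlap covering argument combined with the Euclidean relative isoperimetric inequality: if the sets were to spread out thinly while keeping total volume close to $v$, then $\sum_j |E_i \cap B(x_j,R)|^{n/(n+1)}$ along such a cover would diverge, contradicting the uniform perimeter bound. Dichotomy, i.e.\ a splitting $E_i = F_i \sqcup G_i$ with $\dist(F_i, G_i) \to \infty$, $|F_i| \to v_1 > 0$, $|G_i| \to v_2 > 0$, $v_1 + v_2 = v$, is ruled out by the strict subadditivity of $I_K$: since $I_K(v) = c\,v^{n/(n+1)}$ one has $I_K(v_1)+I_K(v_2) > I_K(v)$, while eventually $P_K(E_i) = P_K(F_i) + P_K(G_i) \ge I_K(|F_i|) + I_K(|G_i|)$, whose liminf strictly exceeds $I_K(v)$ and contradicts equality.

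Only concentration remains: there exist translations $x_i \in \rr^{n+1}$, which can be arranged so that $x_i + K \subset K$ (by translating $-x_i$ inside $K$ to a point where a definite fraction of the mass of $E_i$ accumulates in a ball of fixed radius), such that, along a further subsequence, $x_i + E_i \to F$ in $L^1_{\mathrm{loc}}(\rr^{n+1})$ with $F \subset K$ and $|F| = v$. Lower semicontinuity of perimeter gives $P_K(F) \le I_K(v)$, so $F$ is an isoperimetric region in $K$ of volume $v$; the uniqueness result of Figalli and Indrei \cite{FI} then identifies $F$ as the geodesic ball in $K$ centered at $0$ of volume $v$, and uniqueness of the limit upgrades the subsequential convergence to convergence of the full sequence. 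The main obstacle I anticipate is the rigorous selection of the translations $x_i$: one must prevent the translated sets from drifting off along the cone, and one must ensure that the ambient cone does not effectively enlarge in the limit (to, say, a half-space with larger solid angle), which would yield a smaller limiting isoperimetric profile; both issues are controlled by the homogeneity of $K$ together with the solid-angle monotonicity expressed in \eqref{eq:isopsolang}, which forces any such enlargement to be incompatible with the equality case of \eqref{eq:mincone}.
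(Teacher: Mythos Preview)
Your proof of the inequality \eqref{eq:mincone} is exactly the paper's. For the equality case, however, the paper takes a much more direct route: instead of running concentration--compactness and then invoking only the \emph{uniqueness} statement of Figalli--Indrei, it applies directly the \emph{quantitative stability} theorem of \cite{FI}. Writing $K=\rr^k\times\tilde K$ with $\tilde K$ containing no lines, and setting the isoperimetric deficit $\mu(E_i)=P_K(E_i)/I_K(v_i)-1\to 0$, that theorem produces for each $i$ a point $x_i\in\rr^k\times\{0\}$ (the lineality space of $K$, so automatically $x_i+K=K$) together with the estimate
\[
\frac{\vol{E_i\,\triangle\,(s_iB(v)+x_i)}}{\vol{E_i}}\le C(n,K)\Big(\sqrt{\mu(E_i)}+o(1)\Big),
\]
where $s_iB(v)$ is the geodesic ball of volume $v_i$. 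This yields both the correct translations and the $L^1$ convergence in one stroke; no compactness argument, no vanishing/dichotomy analysis, and no discussion of enlarging limit cones is needed.

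Your concentration--compactness plan can in principle be completed, but the obstacle you yourself flag is real and not merely cosmetic. The centers $p_i$ handed to you by concentration lie in $K$, and translating by $-p_i$ preserves $K$ only when $p_i$ belongs to the lineality space $\rr^k\times\{0\}$. One must therefore split $p_i=(a_i,b_i)\in\rr^k\times\tilde K$, translate only by $-(a_i,0)$, and then \emph{prove} that the components $b_i$ remain bounded. If they did not, translating fully by $-p_i$ would place $E_i-p_i$ in $K-p_i$, whose local limit would be a cone $K'$ with strictly larger solid angle; by \eqref{eq:isopsolang} this gives $I_{K'}(v)>I_K(v)$, while lower semicontinuity (in varying domains) would produce $F\subset K'$ with $\vol{F}=v$ and $P_{K'}(F)\le I_K(v)<I_{K'}(v)$, a contradiction. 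This works, but it is considerably more labor than the paper's one-line appeal to \cite{FI}. Note also that Figalli--Indrei uniqueness only identifies the limit as a ball centered at \emph{some} apex in $\rr^k\times\{0\}$, so a final translation is still needed to center it at $0$; and your uniqueness-of-the-limit argument for upgrading subsequential to full convergence is incomplete, since different subsequences could a priori select different translation sequences $x_i$.
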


\begin{proof}
We assume $K=\rr^k\times\tilde{K}$, where $k\in\nn\cup\{0\}$ and $\tilde{K}$ is  a closed convex cone which contains no lines so that $0$ is an apex of $\tilde{K}$. Inequality \eqref{eq:mincone} follows from $\pp_K(E_i)\ge I_K(v_i)$ and the continuity of $I_K$. Let $B(w)$ be the geodesic ball in $K$ centered at $0$ of volume $w>0$. If equality holds in \eqref{eq:mincone} then
\begin{equation*}
\mu(E_i)=\bigg(\frac{\pp_K(E_i)}{I_K(v_i)}-1\bigg)\to 0.
\end{equation*}
Define $s_i$ by the equality $\vol{B(v_i)}=\vol{s_iB(v)}$. Obviously $s_i\to 1$. By Theorem 1.2 in \cite{FI} there is a sequence of points $x_i\in \rr^k\times \{0\}$ such that
\begin{equation*}
\bigg(\frac{\vol{E_i\,\triangle\, (s_iB(v) +x_i)}}{\vol{E_i}}\bigg)\le C(n,B(v))\,\bigg( \sqrt{\mu(E_i)}+\frac{1}{i} \bigg).
\end{equation*}
Since $\mu(E_i)\to 0$, and $\vol{E_i}\to v>0$, taking limsup we get $\vol{E_i\,\triangle\, (s_iB(v) +x_i)}\to 0$ and so $\vol{(E_i-x_i)\,\triangle\, B(v)}\to 0$, which proves the result. 
\end{proof}

\begin{theorem}
\label{thm:optinsmalvol}
Let $C\subset\rr^{n+1}$ be a convex body. Then
\begin{equation}
\label{eq:limvto0}
\lim_{v\to 0}\frac{I_C(v)}{I_{C_{\min}}(v)}=1.
\end{equation}
Moreover, a rescaling of a sequence of isoperimetric regions of volumes approaching  $0$ has a convergent subsequence in Hausdorff distance to a geodesic ball centered at some vertex in a tangent cone with the smallest solid angle. The same convergence result holds for their free boundaries.
\end{theorem}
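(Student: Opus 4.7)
The plan is to combine the pointwise upper bound $I_C(v)\le I_{C_{\min}}(v)$ of Proposition~\ref{prp:ICleICmin} with a matching asymptotic lower bound obtained by a blow-up argument. Fix $v_i\to 0$ and let $E_i\subset C$ be isoperimetric of volume $v_i$. By the continuity of $I_C$ (Corollary~\ref{cor:concavidad de perfil}) we have $P_C(E_i)=I_C(v_i)\to 0$, so the lower density bound of Corollary~\ref{cor:monotonicity} combined with the clearing-out principle of Theorem~\ref{thm:leon rigot lem 42} forces $\diam(E_i)\to 0$, and, up to a subsequence, $E_i$ concentrates at a single point $p_\infty\in C$. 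A comparison with a Euclidean ball of volume $v_i$ (strictly worse than testing with $B_C(p,r)$ at a boundary point, cf.\ the proof of Proposition~\ref{prp:ICleICmin} and Remark~\ref{rem:half-plane}) shows $\overline{E_i}$ must meet $\ptl C$ for all sufficiently large $i$; hence $p_\infty\in\ptl C$.

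Next, rescale. Choose $p_i\in\overline{E_i}\cap\ptl C$ with $p_i\to p_\infty$, set $\la_i:=v_i^{-1/(n+1)}\to\infty$, and consider $C_i':=\la_i(C-p_i)$ and $E_i':=\la_i(E_i-p_i)\subset C_i'$. Then $\vol{E_i'}=1$ and the cone scaling $I_K(\la^{n+1}v)=\la^n I_K(v)$ (Lemma~\ref{lem:link I laC I C}) yields
\[
P_{C_i'}(E_i')=\la_i^n\,I_C(v_i)\le\la_i^n\,I_{C_{\min}}(v_i)=I_{C_{\min}}(1),
\]
so the rescaled perimeters are uniformly bounded. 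Since $h_{p_i,\la_i}(C)$ is monotone increasing in $\la_i$ and $p_i\to p_\infty$, the sets $C_i'$ converge, on every compact set in $\rr^{n+1}$, to the tangent cone $K:=C_{p_\infty}$ translated so that $0$ is an apex. Coupled with the scale-invariant density estimate of Theorem~\ref{thm:leon rigot lem 42} applied to $E_i'\subset C_i'$, the uniform perimeter bound forces $\diam(E_i')$ to be uniformly bounded, hence BV compactness produces a subsequence converging in $L^1$ to a finite perimeter set $E_\infty\subset K$ with $\vol{E_\infty}=1$; lower semicontinuity of the relative perimeter across $C_i'\to K$ gives $P_K(E_\infty)\le\liminf_i P_{C_i'}(E_i')$.

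Combining these ingredients produces the chain
\[
I_{C_{\min}}(1)\ge\limsup_{i\to\infty}\la_i^n I_C(v_i)\ge\liminf_{i\to\infty}\la_i^n I_C(v_i)\ge P_K(E_\infty)\ge I_K(1)\ge I_{C_{\min}}(1),
\]
where the first inequality is Proposition~\ref{prp:ICleICmin} after rescaling and the last uses Lemma~\ref{lem:losemcontangcon} together with the explicit formula \eqref{eq:isopsolang}. All inequalities must be equalities in the limit, which proves \eqref{eq:limvto0}, forces $\alpha(K)=\alpha(C_{\min})$ (so $p_\infty$ realizes the minimum of the solid angle function), and shows $E_\infty$ is isoperimetric in $K$; by \cite{FI}, $E_\infty$ is then a geodesic ball centered at an apex of $K$. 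Hausdorff convergence of the rescaled $E_i'$ to $E_\infty$ and of their relative boundaries follows from the density estimates of Theorem~\ref{thm:leon rigot lem 42} exactly as in the proofs of Theorems~\ref{thm:EitoE Haus} and~\ref{thm:ptl haus}. The main difficulty lies in the middle step: since $K$ is unbounded and the $C_i'$ do not converge to $K$ in global Hausdorff distance, one must verify that the density bounds, lower semicontinuity of perimeter, and clearing-out results of Section~5 transfer locally, with constants depending only on the geometry of $K$ near $0$ and uniform in~$i$.
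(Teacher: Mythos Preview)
Your overall strategy---Proposition~\ref{prp:ICleICmin} for the upper bound, a blow-up for the lower bound, and the Figalli--Indrei characterization for the limit---is the same as the paper's. Two points deserve correction, however.

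First, your opening claim that Corollary~\ref{cor:monotonicity} and Theorem~\ref{thm:leon rigot lem 42} force $\diam(E_i)\to 0$ is not justified: the constant $M$ in Corollary~\ref{cor:monotonicity} depends on the $\eps$ of \eqref{eq:epsfine}, which must satisfy $\eps<v_i/\ell_2$ and hence tends to $0$ as $v_i\to 0$, so the density bound degenerates at the original scale. This step is in fact unnecessary. The paper simply picks $p_i\in\overline{E_i}\cap\ptl C$, passes to a subsequence so that $p_i\to 0\in\ptl C$, and only \emph{after} establishing a uniform diameter bound for the rescalings $\la_iE_i$ deduces that $E_i$ is trapped in shrinking balls.

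Second, and more substantially, the difficulty you flag at the end---that $K$ is unbounded and $C_i'\to K$ only locally, so lower semicontinuity of perimeter and the clearing-out of Section~5 must be redone in this setting---is precisely where the paper's argument differs from yours and is more concrete. Rather than working directly in the noncompact cone, the paper truncates: once $\diam(\la_iE_i)$ is shown to be bounded, the sets $\la_iE_i$ lie in $\la_iC\cap\clb(0,\la_ir_i)$ for suitable $r_i$, and Theorem~\ref{thm:lipschitz} furnishes bilipschitz maps $g_i$ from these truncations onto $C_0\cap\clb(0,\la_ir_i)$ with $\Lip(g_i),\Lip(g_i^{-1})\to 1$. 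The images $g_i(\la_iE_i)$ land in a fixed compact part of $C_0$ with $P_{C_0}(g_i(\la_iE_i))=P_{C_0\cap\clb(0,\la_ir_i)}(g_i(\la_iE_i))$, so the comparison reduces to the convex-body theory already in hand and \eqref{eq:mincone1} follows immediately from Lemma~\ref{lem:bilip}. The uniform density estimate for $\la_iE_i\subset\la_iC$ (your ``scale-invariant'' claim) is justified explicitly in the paper by tracking the constants in \eqref{eq:epsfine}: one uses $I_{\la_iC}(1)\ge I_C(1)$ from Lemma~\ref{lem:_IC<la C}, the dilation invariance of the Poincar\'e and lower Ahlfors constants, and, for the complementary case $h(x,R)=\vol{B_{\la_iC}(x,R)\setminus F_i}/\vol{B_{\la_iC}(x,R)}$, a direct appeal to Case~1 of \cite[Lemma~4.2]{le-ri} exploiting the monotonicity of $I_{\la_iC}$ on $(0,\vol{\la_iC}/2]$. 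Without these ingredients your sketch remains incomplete at exactly the point you identify.
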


\begin{proof}
To prove \eqref{eq:limvto0} we first observe that the invariance of the tangent cone by dilations implies that \eqref{eq:ICleICmin} is valid for every $\la C$ with $\la>0$, i.~e., $I_{\la C}\le I_{C_{\min}}$. So we get
\begin{equation}
\label{eq:optinsmalvol1}
\limsup_{i\to\infty}I_{\la_i C}(v)\le I_{C_{\min}}(v),
\end{equation}
for any sequence $\{\la_i\}_{i\in\nn}$ of positive numbers such that $\la_i\to\infty$ and any $v>0$.

Consider now a sequence $\{E_i\}_{i\in\nn}\subset C$ of isoperimetric regions of volumes $v_i\to 0$ and $p_i\in E_i\cap\ptl C$. Translating the convex set and passing to a subsequence we may assume that $p_i\to 0\in\ptl C$. Let $\la_i=v_i^{-1/(n+1)}$. Then $\la_i\to \infty$ and  $\la_i E_i$ are isoperimetric regions in $\la_i C$ of volume $1$. By Theorem~\ref{thm:connectedness}, the sets $\la_i E_i$ are connected. We claim that
\begin{equation}
\label{eq:claim diam<00}
\sup_{i\in\nn}\diam(\la_iE_i)<\infty.
\end{equation}
If claim holds, since $p_i\to 0$, there is a sequence $\tau_i\to 0$ such that $E_i\subset C\cap\clb(0,\tau_i)$. Let $q\in \intt(C\cap\clb(0,1))$, and consider a solid cone $K_q$ with vertex $q$ such that $0\in \intt(K_q)$ and $K_q\cap C_0\cap \ptl B(0,1)=\emptyset$. Let $s>0$ so that $\clb(0,s)\subset K_q$. Taking $r_i=s^{-1}\tau_i$, $i\in\nn$, we have
\[
r_i^{-1}E_i\subset C\cap\clb(0,r_i^{-1}\tau_i)=C\cap\clb(0,s) \subset K_q.
\]
As the sequence $r_i^{-1}C\cap\clb(0,1)$ converges in Hausdorff distance to $C_0\cap\clb(0,1)$ we construct, using Theorem~\ref{thm:lipschitz}, a family of bilipschitz maps $h_i:r_i^{-1}C\cap \clb(0,1)\to C_0\cap\clb(0,1)$ using the ball $B_q$. So $h_i$ is the identity in $B_q$ and it is extended linearly along the segments leaving from $q$. By construction, the maps $h_i$ have the additional property
\begin{equation}
\label{eq:percone}
P_{C_0}(h_i(r_i^{-1}E_i))=P_{C_0\cap\clb(0,1)}(h_i(r_i^{-1}E_i)).
\end{equation}
So the sequence of bilipschitz maps $g_i: \la_i C\cap \clb(0,\la_i r_i)\to C_0\cap\clb(0,\la_ir_i)$, obtained as in Remark~\ref{rem:lipschitz} with the property $\Lip(h_i)=\Lip(g_i)$ and $\Lip(h_i)=\Lip(g_i^{-1})$ satisfies
\[
P_{C_0}(g_i(\la_iE_i))=P_{C_0\cap\clb(0,\la_ir_i)}(g_i(\la_iE_i)).
\]
This property and Lemma~\ref{lem:bilip} imply
\begin{equation}
\label{eq:mincone1}
\begin{split}
\lim_{i\to\infty}\vol{g_i(\la_iE_i)}&=\lim_{i\to\infty}\vol{\la_iE_i},
\\
\lim_{i\to\infty}\pp_{C_o}(g_i(\la_iE_i))&=\lim_{i\to\infty}\pp_{\la_iC}(\la_iE_i).
\end{split}
\end{equation}
From these equalities, the continuity of $I_{C_0}$, and the fact that $\la_i E_i\subset \la_i C$ are isoperimetric regions of volume $1$, we get
\[
I_{C_0}(1)\le\liminf_{i\to\infty}I_{\la_i C}(1).
\]
combining this with \eqref{eq:optinsmalvol1} and the minimal property of $C_{\min}$ we deduce
\begin{equation*}
\limsup_{i\to\infty}I_{\la_i C}(1)\le I_{C_{\min}}(1)\le I_{C_0}(1)\le\liminf_{i\to\infty}I_{\la_i C}(1).
\end{equation*}
Thus
\begin{equation}
\label{eq:optinsmalvol3}
I_{C_0}(1)=I_{C_{\min}}(1)=\lim_{i\to\infty}I_{\la_i C}(1).
\end{equation}
By \eqref{eq:isopsolang}, we deduce that $C_0$ has minimum solid angle. Finally, from \eqref{eq:optinsmalvol3}, \eqref{lem:link I laC I C}, and the fact that $\la C_{0}=C_0$ we deduce
\[
1=\lim_{i\to\infty}\frac{I_{\la_i C}(1)}{I_{C_{0}}(1)}=
\lim_{i\to\infty}\frac{{\la}_i^nI_C(1/{\la}_i^{n+1})}{{\la}_i^n I_{C_{0}}(1/{\la}_i^{n+1})}=\lim_{i\to\infty}\frac{I_C(v_i)}{I_{C_{0}}(v_i)}.
\]

So it remains to prove \eqref{eq:claim diam<00} to conclude the proof. For this it is enough to prove 
\begin{equation}
\label{eq:uniformmonotonicity}
\pp_{\la_i C}(F_i,B_{\la_i C}(x,r))\ge Mr^n,
\end{equation}
for any $0<r\le 1$, $x\in C$, and any isoperimetric region $F_i\subset\la_i C$ of volume $1$. The constant $M>0$ is independent of $i$. 

To prove \eqref{eq:uniformmonotonicity}, observe first that the constant $M$ in the relative isoperimetric inequality \eqref{eq:isnqgdbl1} is invariant by dilations and, if the factor of dilation is chosen larger than $1$ then the estimate $r\le r_0$ is uniform. The same argument can be applied to a lower Ahlfors constant $\ell_1$. The constant $\ell_2=\omega_{n+1}=\vol{\clb(0,1)}$ is universal and does not depend on the convex set.

Now we modify the proof of Theorem~\ref{thm:leon rigot lem 42} to show that there exists some $\eps>0$, independent of $i$, so that if $h(\la_i E_i,\la_i C,x,r)\le \eps$ then $h(\la_i E_i,\la_i C,x,r/2)=0$, for $0<r\le 1$.

First we treat the case
\[
h(F_i,\la_i C,x,R)=\frac{\vol{F_i\cap B_{\la_i C}(x,R)}}{\vol{B_{\la_i C}(x,R)}}.
\]
By Theorem~\ref{thm:leon rigot lem 42}, since $I_{C}(1)\le I_{\la_i C}(1)$ for all $i\in\nn$, it is enough to take
\begin{equation*}
0<\eps\le\min\bigg\{\frac{1}{\ell_2}, c_2, \frac{I_{C}(1)^{n+1}}{\ell_28^{n+1}}\bigg\}.
\end{equation*}
Now when
\[
h(F_i,\la_i C,x,R)=\frac{\vol{B_{\la_i C}(x,R)\setminus F_i}}{\vol{B_{\la C}(x,R)}},
\]
we proceed as in the proof of Case~1 of Lemma 4.2 in \cite{le-ri}. For $\la_i$ large enough we have $1+\ell_2=\vol{\la_i E_i}+\ell_2<\vol{\la _i C}/2$. As $I_{\la_i C}$ is increasing in the interval $(0,\vol{{\la_i} C}/2]$ the proof of Case~1 in Lemma~4.2 of \cite{le-ri} provides an $\eps>0$ independent of $i$. 

As in Remark~\ref{rem:uniformeps} we conclude the existence of $M>0$ independent of $i$ so that \eqref{eq:uniformmonotonicity} holds.

Now, if $\diam(\la_iE_i)$ is not uniformly bounded, \eqref{eq:uniformmonotonicity} implies that $\pp_{\la_i C}(\la_i E_i)$ is unbounded. But this contradicts the fact that $\pp_{\la_i C}(\la_i E_i)=I_{\la_i C}(1)\le I_{C_{\min}}(1)$ for all $i$.

Finally we prove that $\la_iE_i$ converges to $E$ in Hausdorff distance, where $E\subset C_0$ is a  geodesic ball of volume 1 centered at $0$. By \eqref{eq:mincone1}, $\{g_i(\la_iE_i)\}_{i\in\nn}$ is a minimizing sequence in $C_0$ of volume $1$. By Lemma~\ref{lem:mincone}, translating the whole sequence $\{g_i(\la_iE_i)\}_{i\in\nn}$ if necessary we may assume it is uniformly bounded and so a subsequence of $g_i(\la_iE_i)\to E$ in $L^1(C_0)$. Theorem~\ref{thm:EitoE Haus} implies the Hausdorff convergence of the isoperimetric regions. Theorem~\ref{thm:ptl haus} implies the convergence of the free boundaries.
\end{proof}

From Theorem~\ref{thm:optinsmalvol}  we easily get
\begin{corollary}
\label{cor:CminIC}
Let $C, K\subset \rr^{n+1}$ be convex bodies, with $I_{C_{\min}}>I_{K_{\min}}$. Then for small volumes we have $I_C>I_K$.
\end{corollary}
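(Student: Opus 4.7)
The plan is to reduce the comparison of $I_C$ and $I_K$ for small volumes to the known comparison of their limiting cone profiles, using Theorem~\ref{thm:optinsmalvol} applied separately to $C$ and to $K$.

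First, I would observe that by \eqref{eq:isopsolang} both $I_{C_{\min}}$ and $I_{K_{\min}}$ are of the form $c\,v^{n/(n+1)}$, with constants
\[
c_C=\alpha(C_{\min})^{1/(n+1)}(n+1)^{n/(n+1)},\qquad c_K=\alpha(K_{\min})^{1/(n+1)}(n+1)^{n/(n+1)}.
\]
The hypothesis $I_{C_{\min}}>I_{K_{\min}}$ therefore amounts to $c_C>c_K$, so the ratio
\[
\rho:=\frac{I_{C_{\min}}(v)}{I_{K_{\min}}(v)}=\frac{c_C}{c_K}>1
\]
is a \emph{constant} independent of $v$.

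Next, apply Theorem~\ref{thm:optinsmalvol} to $C$ and to $K$ separately to obtain
\[
\lim_{v\to 0}\frac{I_C(v)}{I_{C_{\min}}(v)}=1\qquad\text{and}\qquad \lim_{v\to 0}\frac{I_K(v)}{I_{K_{\min}}(v)}=1.
\]
Writing
\[
\frac{I_C(v)}{I_K(v)}=\frac{I_C(v)}{I_{C_{\min}}(v)}\cdot\frac{I_{C_{\min}}(v)}{I_{K_{\min}}(v)}\cdot\frac{I_{K_{\min}}(v)}{I_K(v)},
\]
and taking the limit as $v\to 0$, the first and third factors tend to $1$ while the middle factor is the constant $\rho>1$. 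Hence
\[
\lim_{v\to 0}\frac{I_C(v)}{I_K(v)}=\rho>1.
\]

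Finally, choose $\eta\in(0,\rho-1)$. By the definition of the limit, there exists $v_0>0$ such that $I_C(v)/I_K(v)\ge\rho-\eta>1$ for every $0<v\le v_0$. Since $I_K(v)>0$ for $v\in(0,v_0]$, this gives $I_C(v)>I_K(v)$ on that range, which is exactly the claim. No step here is a real obstacle: once Theorem~\ref{thm:optinsmalvol} is available, the corollary is a one-line manipulation of limits, with the essential input being that the cone profiles share the same $v^{n/(n+1)}$ scaling so that their pointwise ratio is a strictly positive constant.
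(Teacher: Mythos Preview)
Your argument is correct and is exactly the intended one: the paper itself does not write out a proof but simply states that the corollary follows from Theorem~\ref{thm:optinsmalvol}, and your computation of $\lim_{v\to 0} I_C(v)/I_K(v)=c_C/c_K>1$ via the constant ratio $I_{C_{\min}}/I_{K_{\min}}$ is precisely how one unpacks that sentence.
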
 

For polytopes we are able to show which are the isoperimetric regions for small volumes. The same result holds for any convex set so that there is $r>0$ such that, at every point $p\in\ptl C$ with tangent cone of minimum solid angle we have $B(p,r)\cap C_p=B(p,r)\cap C$.

\begin{theorem}
\label{thm:polytops}
Let $P \subset \rr^{n+1}$ be a convex polytope.  For small volumes the isoperimetric regions in $P$ are geodesic balls centered at vertices with the smallest solid angle.
\end{theorem}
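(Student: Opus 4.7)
The plan is to argue by contradiction. Assume the conclusion fails: there exists a sequence of volumes $v_i\downarrow 0$ and isoperimetric regions $E_i\subset P$ of volume $v_i$ none of which is a geodesic ball centered at a vertex of minimum solid angle. Theorem~\ref{thm:optinsmalvol} then provides boundary points $p_i\in E_i\cap\ptl P$ such that, along a subsequence, $p_i\to p\in \ptl P$ with $\alpha(p)=\inf_{q\in\ptl P}\alpha(q)$, and the uniform diameter estimate \eqref{eq:claim diam<00} for the rescaled regions gives $\diam(E_i)\le M\,v_i^{1/(n+1)}\to 0$.

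The next step is to verify that the limit point $p$ must be a vertex of $P$. Suppose instead that $p$ lies in the relative interior of an $m$-face $F$ with $m\ge 1$, and pick a vertex $q$ of $F$. A direct convexity argument gives $C_q-q\subset C_p-p$: given $v\in C_q-q$ with $q+tv\in P$ for $t\in[0,\eps]$, writing $p=\la q+(1-\la)q'$ for some other vertex $q'$ of $F$ and $\la\in(0,1)$, convexity of $P$ yields $p+\la\eps v=\la(q+\eps v)+(1-\la)q'\in P$, so $v\in C_p-p$. But $C_p-p$ contains the $m$-dimensional linear span of $F-p$, whereas $C_q-q$ contains no line (since $q$ is extreme in $P$), so the inclusion is strict and $\alpha(q)<\alpha(p)$, contradicting the choice of $p$.

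Once $p$ is known to be a vertex, the polytope structure gives some $r>0$ with $B(p,2r)\cap P=B(p,2r)\cap C_p$. For $i$ large enough, $p_i\in B(p,r/2)$ and $\diam(E_i)<r/2$, whence $E_i\subset B(p,r)\cap P=B(p,r)\cap C_p$. Let $B_i^*$ be the geodesic ball in $C_p$ centered at $p$ of volume $v_i$; the same containment argument gives $B_i^*\subset B(p,r)\cap P$ for $i$ large, so $B_i^*$ is a valid competitor for $E_i$ in $P$. Since $P$ and $C_p$ coincide inside $B(p,r)$, the relative perimeters of $E_i$ and $B_i^*$ in $\intt(P)$ and in $\intt(C_p)$ agree, and the minimality of $E_i$ in $P$ gives
\[
I_{C_p}(v_i)\le \pp_{C_p}(E_i)=\pp_P(E_i)\le \pp_P(B_i^*)=\pp_{C_p}(B_i^*)=I_{C_p}(v_i),
\]
forcing $E_i$ to be isoperimetric in the cone $C_p$.

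Finally, since $p$ is a vertex the cone $C_p$ contains no line, and the uniqueness part of the classification of isoperimetric regions in convex cones \cite{FI} identifies $E_i$ with the unique geodesic ball centered at the apex $p$, namely $B_i^*$, contradicting the defining property of the $E_i$. I expect the main obstacle to be the second paragraph: verifying that for a polytope the minimum of the solid angle function is attained exclusively at vertices requires both the cone inclusion $C_q-q\subset C_p-p$ and the strict monotonicity of $\alpha$ under strict inclusion of cones (which rests on the fact that the difference of the two cones meets the unit sphere in a set of positive $\hh^n$-measure because $C_p-p$ contains lines that $C_q-q$ does not).
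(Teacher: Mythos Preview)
Your argument is correct and follows essentially the same route as the paper's proof: use Theorem~\ref{thm:optinsmalvol} to localize $E_i$ near a boundary point of minimum solid angle, observe that $E_i$ eventually lies in a neighborhood where $P$ coincides with the tangent cone, deduce that $E_i$ is isoperimetric in that cone, and invoke \cite{FI}. The paper's own proof is terser—it simply asserts that ``a subsequence of $E_i$ is close to some vertex $x$'' and that ``they are isoperimetric regions in $P_x$''—whereas you supply both the argument that the minimum of $\alpha$ is attained only at vertices and the explicit perimeter comparison $\pp_{C_p}(E_i)=\pp_P(E_i)\le\pp_P(B_i^*)=I_{C_p}(v_i)$.

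Two small points on your second paragraph. First, when $m\ge 2$ the point $q'$ with $p=\la q+(1-\la)q'$ need not be a vertex of $F$; it suffices to take $q'\in F$, and the convexity step $p+\la\eps v=\la(q+\eps v)+(1-\la)q'\in P$ goes through unchanged. Second, the strict inequality $\alpha(q)<\alpha(p)$ follows cleanly from the general fact that for closed convex cones with nonempty interior, $K_1\subsetneq K_2$ forces $\intt(K_2)\setminus K_1\neq\emptyset$ (otherwise $K_2=\overline{\intt(K_2)}\subset K_1$), hence $\vol{(K_2\setminus K_1)\cap B(0,1)}>0$; you do not need to track which lines lie in which cone.
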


\begin{proof}
Let $\{E_i\}_{i\in\nn}$ be a sequence of isoperimetric regions in $P$ with $\vol{E_i}\to 0$. By Theorem~\ref{thm:optinsmalvol}, a subsequence of $E_i$ is close to some vertex $x$ in $P$. Since $\diam(E_i)\to 0$ we can suppose that, for small enough volumes, the sets $E_i$ are also subsets of the tangent cone $P_x$ and they are isoperimetric regions in $P_x$. By \cite{FI} the only isoperimetric regions in this cone are the geodesic balls centered at $x$. These geodesic balls are also subsets of $P$.\end{proof} 

\begin{remark}
\label{rem:Malchiodi's student}
In \cite{fall} Fall considered the partitioning problem of a domain with smooth boundary in a smooth Riemannian manifold. He showed that, for small enough volume, the isoperimetric regions are concentrated near the maxima of the mean curvature function and that they are asymptotic to half-spheres. The techniques used in this paper are similar to the ones used by Nardulli \cite{nardulli} in his study of isoperimetric regions of small volume in compact Riemannian manifolds. See also \cite[Thm.~2.2]{MR1803220}.
\end{remark}

\begin{proposition}
Let $C\subset\rr^{n+1}$ be a convex body and $\{E_i\}_{i\in\nn}$ a sequence of isoperimetric regions with $\vol{E_i}\to 0$. Assume that $0\in\ptl C$ and that $C_0$ is a tangent cone with the smallest solid angle.  Let $\la_i>0$ be so that $\vol{\la_i E_i} =1$, and let $E\subset C_0$ be the geodesic ball in $C_0$ centered at $0$ of volume $1$. Then, for every $x\in\ptl E\cap\intt(C_0)$ so that $B(x,r)\subset\intt(C_0)$, the boundary $\ptl \la_iE_i\cap B(x,r)$ is a smooth graph with constant mean curvature for $i$ large enough.
\end{proposition}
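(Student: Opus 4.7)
The plan is to combine the Hausdorff convergence established in Theorem~\ref{thm:optinsmalvol} and Theorem~\ref{thm:ptl haus} with Allard's regularity theorem and a Schauder bootstrap for the constant mean curvature equation.

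Since $\la_iC=h_{0,\la_i}(C)\subset C_0$ and $\la_i\to\infty$, the dilates $\la_iC$ exhaust $C_0$ and converge to $C_0$ locally in Hausdorff distance. Hence, for the fixed ball $B(x,r)\subset\intt(C_0)$, there exists $i_0\in\nn$ such that $\clb(x,r)\subset\intt(\la_iC)$ for all $i\ge i_0$. Inside this ball the free boundary of the isoperimetric set $\la_iE_i\subset\la_iC$ is, by Lemma~\ref{lem:n-7}, a smooth embedded hypersurface $S_i$ of constant mean curvature $H_i$ away from a closed singular set of Hausdorff dimension at most $n-7$. The proof of Theorem~\ref{thm:optinsmalvol} (after the implicit translations used there) shows that $\la_iE_i\to E$ in Hausdorff distance in $C_0$, and Theorem~\ref{thm:ptl haus} then yields $\cl{\ptl(\la_iE_i)\cap\intt(\la_iC)}\to\cl{\ptl E\cap\intt(C_0)}$ in Hausdorff distance on compact subsets of $C_0$; in particular inside $B(x,r)$.

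To apply Allard's regularity theorem to the integer rectifiable varifolds $V_i$ associated with $\ptl(\la_iE_i)\cap B(x,r)$, three uniform bounds are needed. First, the mass $\h^n(\ptl(\la_iE_i)\cap B(x,r))\le P_{\la_iC}(\la_iE_i)=I_{\la_iC}(1)$ is uniformly bounded since $I_{\la_iC}(1)\to I_{C_0}(1)$ by \eqref{eq:optinsmalvol3}. Second, the mean curvatures $H_i$ are uniformly bounded: by Proposition~\ref{prp:anal prop isop prof}(iii) applied to $\la_iC$ one has $(I_{\la_iC})'_+(1)\le H_i\le(I_{\la_iC})'_-(1)$, and since each $I_{\la_iC}$ is concave (Corollary~\ref{cor:concavidad de perfil}) and these profiles converge pointwise to the smooth function $I_{C_0}$ in a neighborhood of $v=1$, their one-sided derivatives at $1$ are uniformly bounded. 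In fact $H_i\to H_0$, the mean curvature of the spherical cap $\ptl E$. Third, Hausdorff convergence of the boundaries together with the uniform lower density bound of Corollary~\ref{cor:monotonicity} (valid uniformly in $i$ by Remark~\ref{rem:uniformeps}) forces the multiplicity of $V_i$ to be one at points of $\ptl E$ in the limit.

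Under these uniform bounds, Allard's regularity theorem \cite{simon} implies that for every $x'\in\ptl E\cap B(x,r)$ and every sufficiently small $\rho>0$, $\ptl(\la_iE_i)\cap B(x',\rho)$ is, for $i$ large, a $C^{1,\alpha}$ graph over the tangent hyperplane $T_{x'}\ptl E$, with $C^{1,\alpha}$ norm close to that of the corresponding piece of $\ptl E$. Each such graph satisfies the quasilinear elliptic prescribed-constant-mean-curvature equation with constant $H_i$, so Schauder theory upgrades the regularity to $C^{\infty}$. A covering argument on $\cl{\ptl E\cap B(x,r)}$ then assembles these local graphs into a global smooth graph representation of $\ptl(\la_iE_i)\cap B(x,r)$ over $\ptl E\cap B(x,r)$ (taking $r$ small enough that $\ptl E\cap B(x,r)$ is itself a graph over a hyperplane), yielding the conclusion.

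The main obstacle is the uniform mean curvature bound: the ambient convex body $\la_iC$ varies with $i$ and is not obtained by Hausdorff perturbation of a fixed convex body, so one must combine the concavity of each $I_{\la_iC}$ with the convergence of $I_{\la_iC}$ near $v=1$ to the smooth power function $I_{C_0}$ in order to control the one-sided derivatives that bracket $H_i$ independently of~$i$.
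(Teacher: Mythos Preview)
Your proof is correct and follows the same overall strategy as the paper: invoke Allard's regularity theorem for the varifolds associated with $\ptl(\la_iE_i)$ inside $B(x,r)$, the key analytic input being a uniform bound on the constant mean curvatures of the rescaled regions. You in fact spell out the Allard application (mass bound, mean curvature bound, multiplicity one, Schauder bootstrap) more completely than the paper, which essentially only records the mean curvature estimate.

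The one genuine difference is in how that estimate is obtained. The paper works directly on the \emph{unrescaled} body $C$: from the concavity of $Y_C=I_C^{(n+1)/n}$ and $Y_C(0)=0$ it gets $(Y_C)'_-(v)\le Y_C(v)/v\le M$, and combining with the lower bound $I_C(v)\ge m\,v^{n/(n+1)}$ from Lemma~\ref{lem:I_C(v)> cv} and with $H\le (I_C)'_-(v)$ it deduces a universal bound on $v^{1/(n+1)}H$, i.e.\ on the mean curvature after rescaling. Your argument instead transfers everything to $\la_iC$ and bounds $(I_{\la_iC})'_\pm(1)$ by exploiting concavity of $I_{\la_iC}$ together with the pointwise convergence $I_{\la_iC}\to I_{C_0}$ near $v=1$. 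That convergence is not literally stated in the paper beyond $v=1$ in \eqref{eq:optinsmalvol3}, but it follows immediately from \eqref{eq:limvto0} and the scaling identity of Lemma~\ref{lem:link I laC I C}, since $I_{\la_iC}(w)/I_{C_0}(w)=I_C(w\la_i^{-(n+1)})/I_{C_{\min}}(w\la_i^{-(n+1)})\to 1$; you should make this explicit. Your route is conceptually cleaner and also yields the sharper conclusion $H_i\to H_0$ (via convergence of derivatives of concave functions to the derivative of a differentiable limit), at the cost of invoking the asymptotic \eqref{eq:limvto0}; the paper's route is more self-contained, using only concavity of $Y_C$ and the crude comparison $I_C\le I_{C_{\min}}$.
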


\begin{proof}
We use Allard's Regularity Theorem for rectifiable varifolds, see \cite{MR0307015}, \cite{simon}.

Assume $\{E_i\}_{i\in\nn}$ is a sequence of isoperimetric regions of volumes $v_i\to 0$, and that $0\in\ptl C$ is an accumulation point of points in $E_i$. We rescale so that $\vol{\la_iE_i}=1$, project to $C_0$ (by means of the mapping $g_i$), and rescale again to get a minimizing sequence $F_i$ in $C_0$ of volume $1$. The sequence $\{F_i\}_{i\in\nn}$ converges in $L^1(C_0)$ by Lemma~\ref{lem:mincone}.

If $v_i=\vol{E_i}\to 0$ then $\la_i=v_i^{-1/(n+1)}$. Let $H_i$ be the constant mean curvature of the reduced boundary of $E_i$. Then the mean curvature of the reduced boundary of $\la_iE_i$ is $\frac{1}{\la_i}H_i=v_i^{1/(n+1)}H_i$. Let us check that these values are uniformly bounded.

From \eqref{eq:I_C(v)> cv.} we get
\begin{equation}
\label{eq:Hiarebounded1}
I_C(v)\ge m v^{n/(n+1)},
\end{equation}
for all $0<v<\frac{\vol{C}}{2}$ with $m=I_C(\vol{C}/2)/(\vol{C}/2)^{n/(n+1)}$. We also have
\begin{equation}
\label{eq:Hiarebounded2}
I_C^{(n+1)/n}(v)\le M v
\end{equation}
for all $0<v<\vol{C}$. Here $M$ can be chosen as a power of the isoperimetric constant of $C_{\min}$ or $\mathbb{H}^{n+1}$ since $I_C\le I_{C_{\min}}\le I_H$ by Proposition~\ref{prp:ICleICmin} and Remark~\ref{rem:half-plane}.
Since $Y_C=I_C^{(n+1)/n}$ is concave, given $h>0$ small enough, using \eqref{eq:Hiarebounded2} we have
\begin{equation*}
\frac{Y_C(v)-Y_C(v-h)}{h}\le \frac{Y_C(v)}{v}\le M.
\end{equation*}
Taking limits when $h\to 0$ we get
\begin{equation*}
(Y_C)_{-}'(v)\le M,
\end{equation*}
for all $0<v<\vol{C}$. By the chain rule
\begin{equation*}
\bigg(\frac{n+1}{n}\bigg)\,I_C^{1/n}(v)\,(I_C)_{-}'(v)=(Y_C)_{-}'(v)\le M.
\end{equation*}
Since the mean curvature $H$ of any isoperimetric region of volume $v$ satisfies $H\le (I_C)_{-}'(v)$, using \eqref{eq:Hiarebounded1} we have
\begin{equation*}
\bigg(\frac{n+1}{n}\bigg)m^{1/n}v^{1/(n+1)}H\le \bigg(\frac{n+1}{n}\bigg)I_C^{1/n}(v)(I_C)_{-}'(v)=(Y_C)_{-}'(v)\le M
\end{equation*}
So the quantity $v^{1/(n+1)}H$  is uniformly bounded for any $0<v<\vol{C}$. This implies that the constant mean curvature of the reduced boundary of the regions $\la_iE_i$ is uniformly bounded.
\end{proof}

\bibliography{convex}

\providecommand{\bysame}{\leavevmode\hbox to3em{\hrulefill}\thinspace}
\providecommand{\MR}{\relax\ifhmode\unskip\space\fi MR }
\providecommand{\MRhref}[2]{%
  \href{http://www.ams.org/mathscinet-getitem?mr=#1}{#2}
}
\providecommand{\href}[2]{#2}
\begin{thebibliography}{10}

\bibitem{MR2019227}
Stephanie Alexander and Mohammad Ghomi, \emph{The convex hull property and
  topology of hypersurfaces with nonnegative curvature}, Adv. Math.
  \textbf{180} (2003), no.~1, 324--354. \MR{2019227 (2004k:53006)}

\bibitem{MR0307015}
William~K. Allard, \emph{On the first variation of a varifold}, Ann. of Math.
  (2) \textbf{95} (1972), 417--491. \MR{0307015 (46 \#6136)}

\bibitem{MR934771}
F.~Almgren, \emph{Spherical symmetrization}, Proceedings of the {I}nternational
  {W}orkshop on {I}ntegral {F}unctionals in the {C}alculus of {V}ariations
  ({T}rieste, 1985), no.~15, 1987, pp.~11--25. \MR{934771 (89h:49037)}

\bibitem{MR1736268}
Luigi Ambrosio, \emph{Corso introduttivo alla teoria geometrica della misura ed
  alle superfici minime}, Appunti dei Corsi Tenuti da Docenti della Scuola.
  [Notes of Courses Given by Teachers at the School], Scuola Normale Superiore,
  Pisa, 1997. \MR{1736268 (2000k:49001)}

\bibitem{amb}
Luigi Ambrosio and Paolo Tilli, \emph{Topics on analysis in metric spaces},
  Oxford Lecture Series in Mathematics and its Applications, vol.~25, Oxford
  University Press, Oxford, 2004. \MR{2039660 (2004k:28001)}

\bibitem{MR875084}
Christophe Bavard and Pierre Pansu, \emph{Sur le volume minimal de {${\bf
  R}^2$}}, Ann. Sci. \'Ecole Norm. Sup. (4) \textbf{19} (1986), no.~4,
  479--490. \MR{875084 (88b:53048)}

\bibitem{bayle}
Vincent Bayle, \emph{Propri{\'e}t{\'e}s de concavit{\'e} du profil
  isop{\'e}rim{\'e}trique et applications}, Ph.D. thesis, Institut Fourier,
  2003.

\bibitem{bay-rosal}
Vincent Bayle and C{\'e}sar Rosales, \emph{Some isoperimetric comparison
  theorems for convex bodies in {R}iemannian manifolds}, Indiana Univ. Math. J.
  \textbf{54} (2005), no.~5, 1371--1394. \MR{2177105 (2006f:53040)}

\bibitem{be-me}
Pierre B\'erard and Daniel Meyer, \emph{In\'egalit\'es isop\'erim\'etriques et
  applications}, Ann. Sci. \'Ecole Norm. Sup. (4) \textbf{15} (1982), no.~3,
  513--541. \MR{690651 (84h:58147)}

\bibitem{MR2347041}
S.~G. Bobkov, \emph{On isoperimetric constants for log-concave probability
  distributions}, Geometric aspects of functional analysis, Lecture Notes in
  Math., vol. 1910, Springer, Berlin, 2007, pp.~81--88. \MR{2347041
  (2008j:60047)}

\bibitem{bo-sp}
J{\"u}rgen Bokowski and Emanuel Sperner, Jr., \emph{Zerlegung konvexer
  {K}\"orper durch minimale {T}rennfl\"achen}, J. Reine Angew. Math.
  \textbf{311/312} (1979), 80--100. \MR{549959 (81b:52010)}

\bibitem{MR920366}
T.~Bonnesen and W.~Fenchel, \emph{Theory of convex bodies}, BCS Associates,
  Moscow, ID, 1987, Translated from the German and edited by L. Boron, C.
  Christenson and B. Smith. \MR{920366 (88j:52001)}

\bibitem{bbi}
Dmitri Burago, Yuri Burago, and Sergei Ivanov, \emph{A course in metric
  geometry}, Graduate Studies in Mathematics, vol.~33, American Mathematical
  Society, Providence, RI, 2001. \MR{MR1835418 (2002e:53053)}

\bibitem{MR936419}
Yu.~D. Burago and V.~A. Zalgaller, \emph{Geometric inequalities}, Grundlehren
  der Mathematischen Wissenschaften [Fundamental Principles of Mathematical
  Sciences], vol. 285, Springer-Verlag, Berlin, 1988, Translated from the
  Russian by A. B. Sosinski{\u\i}, Springer Series in Soviet Mathematics.
  \MR{936419 (89b:52020)}

\bibitem{ch}
Isaac Chavel, \emph{Isoperimetric inequalities}, Cambridge Tracts in
  Mathematics, vol. 145, Cambridge University Press, Cambridge, 2001,
  Differential geometric and analytic perspectives. \MR{1849187 (2002h:58040)}

\bibitem{MR2215458}
Jaigyoung Choe, Mohammad Ghomi, and Manuel Ritor{\'e}, \emph{Total positive
  curvature of hypersurfaces with convex boundary}, J. Differential Geom.
  \textbf{72} (2006), no.~1, 129--147. \MR{2215458 (2007a:53076)}

\bibitem{MR2329803}
\bysame, \emph{The relative isoperimetric inequality outside convex domains in
  {${\bf R}^n$}}, Calc. Var. Partial Differential Equations \textbf{29} (2007),
  no.~4, 421--429. \MR{2329803 (2008k:58042)}

\bibitem{MR2338131}
Jaigyoung Choe and Manuel Ritor{\'e}, \emph{The relative isoperimetric
  inequality in {C}artan-{H}adamard 3-manifolds}, J. Reine Angew. Math.
  \textbf{605} (2007), 179--191. \MR{2338131 (2009c:53044)}

\bibitem{MR1625982}
Guy David and Stephen Semmes, \emph{Quasiminimal surfaces of codimension {$1$}
  and {J}ohn domains}, Pacific J. Math. \textbf{183} (1998), no.~2, 213--277.
  \MR{1625982 (99i:28012)}

\bibitem{MR1141926}
Martin Dyer and Alan Frieze, \emph{Computing the volume of convex bodies: a
  case where randomness provably helps}, Probabilistic combinatorics and its
  applications ({S}an {F}rancisco, {CA}, 1991), Proc. Sympos. Appl. Math.,
  vol.~44, Amer. Math. Soc., Providence, RI, 1991, pp.~123--169. \MR{1141926
  (93a:52004)}

\bibitem{fall}
Mouhamed~Moustapha Fall, \emph{Area-minimizing regions with small volume in
  {R}iemannian manifolds with boundary}, Pacific J. Math. \textbf{244} (2010),
  no.~2, 235--260. \MR{2587431 (2011b:53070)}

\bibitem{FI}
A.~Figalli and E.~Indrei, \emph{A {S}harp {S}tability {R}esult for the
  {R}elative {I}soperimetric {I}nequality {I}nside {C}onvex {C}ones}, J. Geom.
  Anal. \textbf{23} (2013), no.~2, 938--969. \MR{3023863}

\bibitem{gallot}
Sylvestre Gallot, \emph{In\'egalit\'es isop\'erim\'etriques et analytiques sur
  les vari\'et\'es riemanniennes}, Ast\'erisque (1988), no.~163-164, 5--6,
  31--91, 281 (1989), On the geometry of differentiable manifolds (Rome, 1986).
  \MR{999971 (90f:58173)}

\bibitem{gi}
Enrico Giusti, \emph{Minimal surfaces and functions of bounded variation},
  Monographs in Mathematics, vol.~80, Birkh\"auser Verlag, Basel, 1984.
  \MR{775682 (87a:58041)}

\bibitem{MR684753}
E.~Gonzalez, U.~Massari, and I.~Tamanini, \emph{On the regularity of boundaries
  of sets minimizing perimeter with a volume constraint}, Indiana Univ. Math.
  J. \textbf{32} (1983), no.~1, 25--37. \MR{684753 (84d:49043)}

\bibitem{MR917868}
R.~E. Greene and H.~Wu, \emph{Lipschitz convergence of {R}iemannian manifolds},
  Pacific J. Math. \textbf{131} (1988), no.~1, 119--141. \MR{917868
  (89g:53063)}

\bibitem{grom}
Misha Gromov, \emph{Metric structures for {R}iemannian and non-{R}iemannian
  spaces}, english ed., Modern Birkh\"auser Classics, Birkh\"auser Boston Inc.,
  Boston, MA, 2007, Based on the 1981 French original, With appendices by M.
  Katz, P. Pansu and S. Semmes, Translated from the French by Sean Michael
  Bates. \MR{2307192 (2007k:53049)}

\bibitem{MR862549}
Michael Gr{\"u}ter, \emph{Boundary regularity for solutions of a partitioning
  problem}, Arch. Rational Mech. Anal. \textbf{97} (1987), no.~3, 261--270.
  \MR{862549 (87k:49050)}

\bibitem{MR863638}
Michael Gr{\"u}ter and J{\"u}rgen Jost, \emph{Allard type regularity results
  for varifolds with free boundaries}, Ann. Scuola Norm. Sup. Pisa Cl. Sci. (4)
  \textbf{13} (1986), no.~1, 129--169. \MR{863638 (89d:49048)}

\bibitem{hlp}
G.~H. Hardy, J.~E. Littlewood, and G.~P{\'o}lya, \emph{Inequalities}, Cambridge
  Mathematical Library, Cambridge University Press, Cambridge, 1988, Reprint of
  the 1952 edition. \MR{944909 (89d:26016)}

\bibitem{MR1161609}
Wu-Yi Hsiang, \emph{On soap bubbles and isoperimetric regions in noncompact
  symmetric spaces. {I}}, Tohoku Math. J. (2) \textbf{44} (1992), no.~2,
  151--175. \MR{1161609 (93a:53044)}

\bibitem{MR1318794}
R.~Kannan, L.~Lov{\'a}sz, and M.~Simonovits, \emph{Isoperimetric problems for
  convex bodies and a localization lemma}, Discrete Comput. Geom. \textbf{13}
  (1995), no.~3-4, 541--559. \MR{1318794 (96e:52018)}

\bibitem{kr-pa}
Steven~G. Krantz and Harold~R. Parks, \emph{The geometry of domains in space},
  Birkh\"auser Advanced Texts: Basler Lehrb\"ucher. [Birkh\"auser Advanced
  Texts: Basel Textbooks], Birkh\"auser Boston Inc., Boston, MA, 1999.
  \MR{1730695 (2000m:28005)}

\bibitem{MR2008339}
Ernst Kuwert, \emph{Note on the isoperimetric profile of a convex body},
  Geometric analysis and nonlinear partial differential equations, Springer,
  Berlin, 2003, pp.~195--200. \MR{2008339 (2004g:49065)}

\bibitem{le-ri}
G.~P. Leonardi and S.~Rigot, \emph{Isoperimetric sets on {C}arnot groups},
  Houston J. Math. \textbf{29} (2003), no.~3, 609--637 (electronic).
  \MR{MR2000099 (2004d:28008)}

\bibitem{MR1943988}
Gian~Paolo Leonardi and Italo Tamanini, \emph{Metric spaces of partitions, and
  {C}accioppoli partitions}, Adv. Math. Sci. Appl. \textbf{12} (2002), no.~2,
  725--753. \MR{1943988 (2004c:28001)}

\bibitem{lions-pacella}
Pierre-Louis Lions and Filomena Pacella, \emph{Isoperimetric inequalities for
  convex cones}, Proc. Amer. Math. Soc. \textbf{109} (1990), no.~2, 477--485.
  \MR{90i:52021}

\bibitem{MR1124566}
Umberto Massari and Italo Tamanini, \emph{Regularity properties of optimal
  segmentations}, J. Reine Angew. Math. \textbf{420} (1991), 61--84.
  \MR{1124566 (92h:58055)}

\bibitem{MR2507637}
Emanuel Milman, \emph{On the role of convexity in isoperimetry, spectral gap
  and concentration}, Invent. Math. \textbf{177} (2009), no.~1, 1--43.
  \MR{2507637 (2010j:28004)}

\bibitem{morganpolytops}
Frank Morgan, \emph{In polytopes, small balls about some vertex minimize
  perimeter}, J. Geom. Anal. \textbf{17} (2007), no.~1, 97--106. \MR{2302876
  (2007k:49090)}

\bibitem{MR2438911}
\bysame, \emph{The {L}evy-{G}romov isoperimetric inequality in convex manifolds
  with boundary}, J. Geom. Anal. \textbf{18} (2008), no.~4, 1053--1057.
  \MR{2438911 (2009m:53079)}

\bibitem{MR2455580}
\bysame, \emph{Geometric measure theory}, fourth ed., Elsevier/Academic Press,
  Amsterdam, 2009, A beginner's guide. \MR{2455580 (2009i:49001)}

\bibitem{MR1803220}
Frank Morgan and David~L. Johnson, \emph{Some sharp isoperimetric theorems for
  {R}iemannian manifolds}, Indiana Univ. Math. J. \textbf{49} (2000), no.~3,
  1017--1041. \MR{1803220 (2002e:53043)}

\bibitem{nardulli}
Stefano Nardulli, \emph{The isoperimetric profile of a smooth {R}iemannian
  manifold for small volumes}, Ann. Global Anal. Geom. \textbf{36} (2009),
  no.~2, 111--131. \MR{2529468 (2011a:53110)}

\bibitem{MR892147}
Stefan Peters, \emph{Convergence of {R}iemannian manifolds}, Compositio Math.
  \textbf{62} (1987), no.~1, 3--16. \MR{892147 (88i:53076)}

\bibitem{MR1857855}
Manuel Ritor{\'e}, \emph{The isoperimetric problem in complete surfaces of
  nonnegative curvature}, J. Geom. Anal. \textbf{11} (2001), no.~3, 509--517.
  \MR{1857855 (2002f:53109)}

\bibitem{r-r}
Manuel Ritor{\'e} and C{\'e}sar Rosales, \emph{Existence and characterization
  of regions minimizing perimeter under a volume constraint inside {E}uclidean
  cones}, Trans. Amer. Math. Soc. \textbf{356} (2004), no.~11, 4601--4622
  (electronic). \MR{2067135 (2005g:49076)}

\bibitem{MR2590630}
Manuel Ritor{\'e} and Carlo Sinestrari, \emph{Mean curvature flow and
  isoperimetric inequalities}, Advanced Courses in Mathematics. CRM Barcelona,
  Birkh\"auser Verlag, Basel, 2010, Edited by Vicente Miquel and Joan Porti.
  \MR{2590630 (2011f:53155)}

\bibitem{roc}
R.~Tyrrell Rockafellar, \emph{Convex analysis}, Princeton Mathematical Series,
  No. 28, Princeton University Press, Princeton, N.J., 1970. \MR{0274683 (43
  \#445)}

\bibitem{MR2051615}
Antonio Ros, \emph{Isoperimetric inequalities in crystallography}, J. Amer.
  Math. Soc. \textbf{17} (2004), no.~2, 373--388 (electronic). \MR{2051615
  (2005a:53012)}

\bibitem{sch}
Rolf Schneider, \emph{Convex bodies: the {B}runn-{M}inkowski theory},
  Encyclopedia of Mathematics and its Applications, vol.~44, Cambridge
  University Press, Cambridge, 1993. \MR{MR1216521 (94d:52007)}

\bibitem{simon}
Leon Simon, \emph{Lectures on geometric measure theory}, Proceedings of the
  Centre for Mathematical Analysis, Australian National University, vol.~3,
  Australian National University Centre for Mathematical Analysis, Canberra,
  1983. \MR{87a:49001}

\bibitem{MR1674097}
Peter Sternberg and Kevin Zumbrun, \emph{On the connectivity of boundaries of
  sets minimizing perimeter subject to a volume constraint}, Comm. Anal. Geom.
  \textbf{7} (1999), no.~1, 199--220. \MR{1674097 (2000d:49062)}

\bibitem{tam-reg}
Italo Tamanini, \emph{Regularity results for almost minimal oriented
  hypersurfaces in $\mathbb{R}^n$}, Quaderni del Dipartimento di Matematica
  dell'Universit\`a di Lecce, no.~1, Universit\`a di Lecce,
  http://cvgmt.sns.it/paper/1807/, 1984.

\bibitem{Ziemer}
William~P. Ziemer, \emph{Weakly differentiable functions}, Graduate Texts in
  Mathematics, vol. 120, Springer-Verlag, New York, 1989, Sobolev spaces and
  functions of bounded variation. \MR{1014685 (91e:46046)}

\end{thebibliography}

\end{document}